\def\section{\@startsection{section}{1}%
  \z@{1.5\linespacing\@plus\linespacing}{.5\linespacing}%
  {\normalfont\bfseries\large\centering}}
\numberwithin{equation}{section}
\begin{document}

\sloppy
\renewcommand{\theequation}{\arabic{section}.\arabic{equation}}
\thinmuskip = 0.5\thinmuskip
\medmuskip = 0.5\medmuskip
\thickmuskip = 0.5\thickmuskip
\arraycolsep = 0.3\arraycolsep

\newtheorem{theorem}{Theorem}[section]
\newtheorem{lemma}[theorem]{Lemma}
\newtheorem{prop}[theorem]{Proposition}
\newtheorem{definition}{Definition}
\newtheorem{remark}{Remark}
\renewcommand{\thetheorem}{\arabic{section}.\arabic{theorem}}
\newcommand{\prf}{\noindent{\bf Proof.}\ }
\def\prfe{\hspace*{\fill} $\Box$

\smallskip \noindent}

\def\be{\begin{equation}}
\def\ee{\end{equation}}
\def\bea{\begin{eqnarray}}
\def\eea{\end{eqnarray}}
\def\beas{\begin{eqnarray*}}
\def\eeas{\end{eqnarray*}}

\newcommand{\R}{\mathbb R}
\newcommand{\N}{\mathbb N}
\newcommand{\T}{\mathbb T}
\newcommand{\K}{\mathbb S}
\newcommand{\leftexp}[2]{{\vphantom{#2}}^{#1}{#2}}

\def\g{\partial}
\def\E{\mathcal{ E}}
\def\D{\mathcal{D}}
\def\F{\mathcal{F}}
\def\P{\mathcal{P}}
\def\t{\bar{\partial}} 
\def\div{\mbox{div}}
\def\curl{\mbox{curl}}
\def\open#1{\setbox0=\hbox{$#1$}
\baselineskip = 0pt
\vbox{\hbox{\hspace*{0.4 \wd0}\tiny $\circ$}\hbox{$#1$}}
\baselineskip = 11pt\!}
\def\fn{\open{f}}
\def\n{\bar{\nu}}
\def\A{\leftexp{\kappa}{\!\!\!\!\!\!A}}
\def\a{\leftexp{\kappa}{\!\!\!a}}
\def\kA{\leftexp{\kappa}{\!\!\!\tilde{A}}}
\def\w{\leftexp{\kappa}{\!\!\!w}}
\def\kw{\leftexp{\kappa}{\!\!\!\tilde{w}}}

\title{Well-posedness for the classical Stefan problem and the zero surface tension limit}
\author{Mahir Had\v{z}i\'{c} and Steve Shkoller}
\address{Department of Mathematics,
King's College London,
London, United Kingdom}
\address{Department of Mathematics,
University of California, Davis, CA, USA}
\maketitle

\begin{abstract} We develop a framework for a {\em unified} treatment of well-posedness for the Stefan problem
with or without surface tension.   In the absence of surface tension,  we establish well-posedness in Sobolev spaces
for the classical Stefan problem.  We introduce a new velocity variable which extends the velocity of the moving free-boundary into
the interior domain.  The equation satisfied by this velocity is used for the analysis in place of the heat equation satisfied by the temperature.
Solutions to the classical Stefan problem are then  constructed as the limit of  solutions to a carefully
chosen sequence of approximations to the velocity equation,  in which the 
moving free-boundary is regularized and the boundary condition is modified in a such a way as to preserve the basic
nonlinear structure of the original problem.   With our methodology,  we simultaneously find the required stability condition
for well-posedness  and obtain  new estimates for the regularity of the moving free-boundary.    Finally, we prove that solutions of the Stefan problem 
with positive surface tension $\sigma$ converge to solutions of the classical Stefan problem as $\sigma \to 0$.
\end{abstract}

\section{Introduction}\label{S:INTRODUCTION}
\subsection{The problem formulation}\label{se:form}
We consider the local well-posedness and boundary regularity of solutions to the
classical one-phase Stefan problem,  describing the evolving phase boundary of 
a freezing liquid.  We also establish the limit of zero surface tension.

The temperature $p(t,x)$  of a liquid inside of a time-dependent domain $\Omega(t)$
and an {\it a priori unknown}  moving boundary $\Gamma(t)$ satisfies the
following system of equations:
\begin{subequations}
\label{eq:stefan}
\begin{alignat}{2}
p_t-\Delta p&=0&&\ \text{ in } \ \Omega(t)\,,\label{eq:heat}\\
\g_np&=V_{\Gamma(t)}&& \ \text{ on } \ \Gamma(t)\,,\label{eq:neumann}\\
p&=\sigma\kappa_{\Gamma}&& \ \text{ on } \ \Gamma(t)\,,\label{eq:dirichlet}\\
p(0,\cdot)=p_0\,, & \ \Gamma(0)=\Gamma_0&&\,.\label{eq:initial}
\end{alignat}
\end{subequations}
The domain
 $\Omega(t)$ is an evolving open subset of $\R^d$ with $d\ge 2$.  The set $\Gamma(t)$ denotes
the moving boundary (which may be a connected subset of $\partial \Omega(t)$ if
a part of the boundary of $\Omega(t)$ is fixed).  See Figure \ref{fig1}.
\begin{figure}
	[h] \centering 
		\begin{tikzpicture}[scale =1.2]
			\draw [black, thick,fill=gray!20!white] plot [smooth cycle, tension=1] coordinates {(0,0) (1,1) (2.8,0.9) (2.5,0) (1.5,-1)};
			\node [black, above] at (1.15,.3) {$\Omega$};
			\draw [->, thick] (.1,-.1)--(-1,-0.7);
			\node at (0.035,-0.1) {\textbullet};
			\node [right] at (.2,0) {$x'$};
			\node [below] at (-1.2,-0.7) {$N(x')$};
			\node [above] at (1,1.1) {$\Gamma $};
			\draw [black, thick, fill=gray!30!white, xshift=5cm]  plot [smooth cycle, tension=0.8] coordinates {(0,0) (1,-1) (2,-1) (3,1) (1,1)};
			\draw [->, thick] (3,-0.5) arc (180:360:.8cm and 0.5cm);
			\node [below] at (4,-1.) {$\Psi(t, \cdot )$};
			\node [xshift=5cm] at (2.85,-1) {\textbullet};
			\node [xshift=5cm, right] at (3,-1) {$\Psi(t,x')$};
			\node [xshift=5cm, below] at (4,-1.6) {$n(\Psi(t,x'))$};
			\node [above, xshift=5cm] at (1.7,.2) {$\Omega(t)$};
			\node [above, xshift=5cm] at (1.5,1) {$\Gamma(t) $};
			\draw [->, thick, xshift=5cm] (2,-1)--(3,-1.6);
		\end{tikzpicture}
		\caption{{\footnotesize The one-phase Stefan problem.  Displayed on the left side of the figure is the reference domain $\Omega$ and reference
boundary $\Gamma$.  The time-dependent domain $\Omega(t)$ and the moving free-boundary $\Gamma(t)$ is shown on the right side
of the figure.}}\label{fig1}

\end{figure}
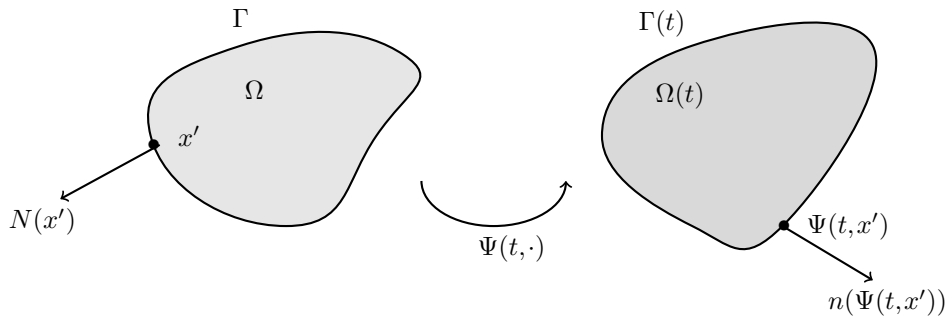

Equation (\ref{eq:heat})  expresses the fact that heat 
heat diffuses in the bulk $\Omega(t)$,  while the boundary condition  (\ref{eq:neumann})
states that the heat flux across the boundary governs the boundary evolution; that is, 
 $\g_np=\nabla p\cdot n$ is the normal derivative of $p$ on $\Gamma(t)$ where
 $n$ stands for the outward pointing unit normal, and 
$V_{\Gamma(t)}$ denotes the speed or the normal velocity of the hypersurface $\Gamma(t)$.
In the case  that $\sigma=0$,~(\ref{eq:dirichlet}) is termed the {\em classical Stefan condition}
and problem~(\ref{eq:stefan}) is called the {\em classical Stefan problem}.  In this case,
freezing of the liquid occurs at a constant temperature $p=0$.   On the other hand, 
if $\sigma>0$ in~(\ref{eq:dirichlet}) then the boundary condition is called 
the {\em Gibbs-Thomson correction} to the classical Stefan condition,
and the system (\ref{eq:stefan})
is then termed the {\em Stefan problem with surface tension}, whereby $\sigma>0$ is a given coefficient 
of surface tension
and $\kappa_{\Gamma(t)}$ stands for the mean curvature of the moving boundary $\Gamma(t)$.
Finally, we equip the problem with suitable initial conditions~(\ref{eq:initial}):
$p_0:\Omega(0)\to\R$ and $\Gamma_0$ are the prescribed initial temperature and 
boundary, respectively. 

Problem~(\ref{eq:stefan}) is an example of free-boundary partial differential equation which requires the initial
data to satisfy a {\it stability condition} in order to ensure well-posedness in Sobolev spaces; specifically, we shall require that
\[
-\g_np_0>0\quad\text{on}\,\,\,\,\Gamma(0)\,,
\]
which,  by analogy to fluid dynamics,  we shall refer to  as
{\em Taylor sign condition} or the Rayleigh-Taylor sign condition.
Below, we will  explain how this Taylor sign condition naturally appears from our analysis.
\subsection{The reference domain  $\Omega$ and the initial domain $\Omega_0$}  We will begin the analysis
with motion in $ \mathbb{R}^2$, and then describe the minor modifications needed to study motion
in $ \mathbb{R}^3  $.
To simplify our presentation, we will parameterize our initial free-boundary $ \Gamma _0$ as a graph over the one-dimensional torus $\T^1$ which we
identify with $[0,2 \pi]$; we define
\be\label{E:INITIALGAMMA}
\Gamma_0 = \{{\bf x}\in\T^1\times\R, \ {\bf x} = (x', h_0(x'))\}, \ \ h_0\in H^4(\T^1).
\ee
Without loss of generality we  shall further assume that $\Gamma_0$ is a small perturbation of the
manifold $\T^1\times\{x^2=0\}$ in the sense that
\be\label{E:INITIAL}
\|h_0\|_{H^4( \mathbb{T}  ^1)} \le \epsilon_0 \ll 1\,,
\ee
for some sufficiently small $\epsilon_0.$ In Appendix~\ref{A:SMOOTHREFERENCE},  we shall explain how to remove the assumption~\eqref{E:INITIAL}.
The only reason for making this smallness assumption is that~\eqref{E:INITIAL} and~\eqref{E:INITIALGAMMA} allow us to 
to use {\it one} global Cartesian coordinate system (rather than a collection of local
coordinate charts). This is ideal for 
describing  new identities that provide very natural estimates for the second-fundamental
form of the evolving free-boundary $\Gamma(t)$.    
All of our results apply to general domains; however,  in a general setting,  we  must employ a finite covering of $\Omega$ by local coordinate charts,
 together with
a partition-of-unity subordinate to that cover. In particular, the Stefan problem {\it localizes} to each chart and effectively
reduces to the analysis on 
\[
\Omega=\T^1\times (0,1).
\]
Again, we emphasize that
the assumption~\eqref{E:INITIAL} is not essential to our proof, and in Appendix~\ref{A:SMOOTHREFERENCE},  we explain
how to treat general $H^4$ initial geometries. 

We define the {\em initial domain}
\[
\Omega_0 = \{ (x_1,x_2) \in\mathbb \T^1\times\R\,\big| \  \, h_0(x^1)<x^2<1 \}\,,
\]
while the {\em reference domain} $\Omega =\T^1\times (0,1) $.
The set 
\[
\Gamma=\T^1\times\{x^2=0\}
\] 
is the {\it reference boundary} on which our parameterization $(x', h(t, x'))$ will be defined.  
The {\it top  boundary} $\partial\Omega_{\text{top}}=\T^1\times\{x^2=1\}$ 
is fixed in time, and
\be\label{eq:fixedbdry}
\g_np=0\quad\mbox{on}\,\,\,\,\partial\Omega_{\text{top}}.
\ee 
\subsection{Notation}
%
%
For any $s\geq0$ and given functions $f:\Omega\to\R$, $\varphi:\Gamma\to\R$ we set
\[
\|f\|_s:=\|f\|_{H^s(\Omega)};\quad
|\varphi|_s:=\|\varphi\|_{H^s(\Gamma)}.
\]
In particular, when $s$ is not an integer, the corresponding fractional Sobolev space is defined by
interpolation in a standard way.
If $f\colon[0,T]\times\Omega\to\R$, $\varphi\colon[0,T]\times\Gamma\to\R$ are given time-dependent functions,
then for any $1\leq p\leq\infty$ we set
\[
\|f\|_{L^p_tH^s_x}:=\|f\|_{L^p([0,T];H^s(\Omega))}\|; \ 
|\varphi|_{L^p_tH^s_x}:=\|\varphi\|_{L^p([0,T];H^s(\Gamma))}.
\]
If $i=1,2$ then 
$f,_i:=\g_{x^i}f$ is the partial derivative of $f$ with respect to $x^i$ coordinate. Similarly,
$f,_{ij}:=\g_{x^i}\g_{x^j}f$ and so on. When differentiating
with respect to the time variable $t$, we set
$f_t=f,_t=\g_tf$. For horizontal derivatives, we write
\[
\t f:=f,_1,\quad\t^kf:=\t_{x^1}^kf.
\]
We use $C$ to denote a universal constant that may vary from line to line.
In numerous estimates the sign $\lesssim$ is used; by definition,  $X\lesssim Y$ if and only if there exists a universal constant $C$ such that $X\leq CY$. 
We use $P$ to denote a generic real polynomial with positive coefficients that can similarly vary from line to line. 
We always sum over  repeated indices.
\subsection{Fixing the domain}   
In order to obtain a priori estimates, and to facilitate the  construction of solutions, we transform the Stefan problem to an
equivalent problem on a fixed domain.  
To this end, we  shall view $\Gamma(t)$ as a graph over $\mathbb T^1$ given by the {\em height function} $h(t,\cdot):\Gamma\to\R$  
$$\Gamma(t) : = \Psi(t,\Gamma).$$
%
In other words, the moving surface $\Gamma(t)$ is parameterized as the graph of a signed height function $h(t,x)$, so that
$\Gamma(t)=\{{\bf x}\in\T^1\times\R|\,\,\,\,{\bf x}=(x',h(t,x'))\}$.
With this parameterization, 
the outward unit normal $n(t,x')$ to $\Gamma(t)$  at the point $(x',h(t,x'))$ is given by
\be\label{E:UNITNORMAL}
n(t,x')=\frac{(\t h,-1)}{\sqrt{1+|\t h|^2}}.
\ee
Assuming  that $h(t, \cdot )$ is sufficiently regular and remains a graph, we can define a diffeomorphism $\Psi(t,\cdot):\Omega\to\Omega(t)$
as an harmonic extension of the  boundary diffeomorphism $(x',h)$, by solving the elliptic equation 
\begin{subequations}
\label{eq:gauge}
\begin{alignat}{2}
\Delta \Psi(t,\cdot)&=0&& \ \text{ in } \ \Omega,\label{eq:diff}\\
\Psi(t,x',0)&=(x',h(t,x'))&& \ \,\, x'\in\Gamma,\label{eq:damo1}\\
\Psi(t,\cdot)&=\text{Id}&& \ \text{ on } \ \partial\Omega_{\text{top}},
\end{alignat}
\end{subequations} 
where $\text{Id}$ denotes the identity map.
The mapping $\Psi(t, \cdot )$ is indeed a diffeomorphism;
note that the map $\Phi: = \Psi - \text{Id}$ solves the problem
\begin{subequations}
\label{eq:gauge2}
\begin{alignat}{2}
\Delta \Phi(t,\cdot)&=0&& \ \text{ in } \ \Omega,\label{eq:diff}\\
\Phi(t,x',0)&=(0,h(t,x'))&& \ \,\, x'\in\Gamma,\label{eq:damo1}\\
\Phi(t,\cdot)&={\bf 0}&& \ \text{ on } \ \partial\Omega_{\text{top}}\,,
\end{alignat}
\end{subequations} 
so that
by elliptic estimates,  we may conclude that $\|\Psi(t, \cdot )-\text{Id}\|_{H^{4.5}(\Omega)} \lesssim |h(t,\cdot )|_{H^4(\Gamma)} \lesssim \epsilon_0$
using the
assumption~\eqref{E:INITIAL}, and the continuity
of the map $t \mapsto h(t, \cdot )$ in $H^4(\Gamma)$ which will be proved below.
 By the inverse function theorem we infer that $\Psi(t, \cdot )$ is a diffeomorphism.

As a consequence of~\eqref{eq:gauge},
\begin{equation}\label{DM}
\|\Psi\|_{H^s(\Omega)}\leq C\|\Psi\|_{H^{s-0.5}(\Gamma)}\,,
\end{equation} 
and thus $\Psi(t, \cdot )$ gains a half-derivative of regularity in $\Omega$ with respect to the height function $h(t, \cdot )$ on $\Gamma$.

\subsection{Reference unit normal, unit tangent, line element, and the Jacobian}
We let 
\[
N=(0,-1), \ \ T=(1,0)
\]
denote the outward pointing  unit normal and  tangent vectors to  $\Gamma=\T^1\times\{x_2=0\}$, respectively.
The time-dependent unit normal $n(t, \cdot )$ and  tangent $\tau(t,\cdot )$ vectors  to $\Gamma(t)$ are given by
\be\label{E:UNITNORMAL}
n = J g^{-1} A^{\text T} N, \ \ \tau = J g^{-1} A^{\text T} T,
\ee
where 
\begin{equation} 
J(t,{\bf x}) : = \det \nabla\Psi(t,{\bf x}), \ \ {\bf x}\in\Omega, \label{E:J}
\end{equation} 
denotes the Jacobian determinant of $\nabla \Psi$,
and
\begin{equation} 
 g(t,x) : = \sqrt{1+(\t h(t,x))^2}, \ \ x\in\mathbb T^1 \label{E:G} \,,
 \end{equation} 
where $g^2\,dx$ is the line element associated with the metric induced on $\Gamma.$
Together with~\eqref{E:UNITNORMAL} we obtain the relationship
\be\label{eq:bullet}
A^2_{\bullet}:=J^{-1}(\t h, -1), \ \ |A^2_\bullet| = J^{-1}g.
\ee
The vector $A^2_{\bullet} (t, \cdot )$ will play an important role in the derivation of energy identities as it is parallel to $n(t,\cdot )$.
\subsubsection{The change of variables} On the reference domain $\Omega$, 
we set
\be\label{eq:ALE0}
q:=p\circ\Psi,\quad\,A:=[\nabla\Psi]^{-1},
\quad\,w:=\Psi_t,\quad\,v:=-\nabla p\circ\Psi.
\ee
Note that, by the chain-rule,  the relation $v=-\nabla p\circ\Psi$ can be written as
\be\label{eq:ALEv}
v^i+A^k_iq,_k=0\,\quad\mbox{in}\quad\Omega.
\ee
We also express $p_t\circ\Psi$ in terms of $q,v,w$.
Again, by the chain rule,
$p_t=q_t\circ\Psi^{-1}+\nabla q\circ\Psi^{-1}\cdot\Psi^{-1}_t$.
Since $\Psi^{-1}_t=-A\circ\Psi^{-1}\Psi_t\circ\Psi^{-1}$ and $w=\Psi_t$, 
using~(\ref{eq:ALEv}) we obtain that
$$
p_t\circ\Psi=q_t-q,_kA^k_rw^r=q_t+v\cdot w.
$$
The transformed Laplacian $\Delta_{\Psi} q := \Delta p\circ\Psi$ is defined as
\begin{equation}\label{ms1}
\Delta_{\Psi} q=A^j_i(A^k_iq,_k),_j \,,
\end{equation} 
and we define
\begin{align}\label{E:PSIGRADIENT}
\nabla_\Psi q: = A^k_iq,_k = \nabla p \circ \Psi.
\end{align}
\begin{remark}[Differentiation rules]
When differentiating the matrix $A=[\nabla\Psi]^{-1}$,
for a given $i,k\in\{1,2\}$, 
\[
\g_tA^k_i=-A^k_rw^r,_sA^s_i;\qquad\t A^k_i=-A^k_r\t\Psi^r,_sA^s_i.
\]
In particular,  a simple application of the above identities, together with the product rule, show that
for any given $a,b\in\N$:
\be\label{eq:comm}
\t^a\g_t^bA^k_i=-A^k_r\t^a\g_t^b\Psi^r,_sA^s_i+\{\t^a\g_t^b,\,A^k_i\};
\quad
\{\t^m\g_t^n,\,A^k_i\}:=\sum_{l+l'\geq1}a_{l,l'}\t^l\g_t^{l'}(A^k_rA^s_i)\t^{m-l}\g_t^{n-l'}\Psi^r,_s,
\ee
where the term $\{\cdot,\,\cdot\}$  is the {\em commutator} error.
Here the constants $a_{l,l'}$ are some universal constants, depending only on $m$, $n$, $l$ and $l'$
(where $0\leq l\leq m$, $0\leq l'\leq n$).
\end{remark}
\subsubsection{Classical Stefan problem in the new variables}
Using the family of diffeomorphisms $\Psi(t, \cdot)$, the classical Stefan problem (i.e. problem~(\ref{eq:stefan}) with $\sigma=0$) on the fixed reference domain $\Omega$
is given by
\begin{subequations}
\label{eq:ALE}
\begin{alignat}{2}
q_t- \Delta _\Psi q&=-v\cdot w&& \ \text{ in } \ \Omega \times (0,T] \,,\label{eq:ALEheat}\\
v^i+A^k_iq,_k&=0&&\ \text{ in } \ \Omega  \times (0,T] \,,\\
q&=0&& \ \text{ on } \ \Gamma \times [0,T] \,,\label{eq:ALEdirichlet}\\
\Delta \Psi&=0&& \ \text{ on } \ \Omega  \times [0,T] \,,\\
\Psi&= \text{Id}+ h\, N&& \ \text{ on } \ \Gamma  \times [0,T] \,,\\
\Psi&= \text{Id}&& \ \text{ on } \ \partial\Omega_{top}  \times [0,T] \,,\\
\Psi_t\cdot n(t)&=-v\cdot n(t)&& \ \text{ on } \ \Gamma  \times (0,T] \,,\label{eq:ALEneumann}\\
v\cdot N&=0&& \ \text{ on } \ \g\Omega_{\text{\text{top}}} \times [0,T] \,,\label{eq:ALEtop}\\
\Psi(0,\cdot)=\Psi_0& \ \   q(0,\cdot)=q_0=p_0\circ\Psi_0\,,&&\,\label{eq:ALEinitial}
\end{alignat}
\end{subequations}
where $ \Delta _ \Psi q$ is defined in (\ref{ms1}) and $N=(0,1)$ is the outward-pointing unit normal to $\partial\Omega_{\text{top}}$.
Problem~(\ref{eq:ALE}) is a reformulation of 
problem~(\ref{eq:stefan}). 
Condition~\eqref{eq:ALEneumann} is equivalent to the  evolution equation for the height function $h(t, \cdot )$ which is given by
\begin{align}\label{E:HEVOLUTION}
h_t(t,x)  = -g(t,x) \nabla_{\Psi}q(t,x) \cdot n(t,x), \ \ x\in \mathbb T^1, 
\end{align}
where the quantity $g$ is defined in~\eqref{E:G} and $ \nabla _\Psi q$ is defined in \eqref{E:PSIGRADIENT}.
The time-evolution of the map $\Psi(t,\cdot )$ is, in turn, coupled to the evolution of $q(t,\cdot )$ via~(\ref{eq:ALEheat}).
\subsubsection{The higher-order energy function $ \mathcal{E} (t)$} We define the higher-order energy function as
\be
\label{eq:ALEenergy}
\begin{array}{l}
\displaystyle
\E(t)=\E(q,h)(t):=\sum_{a+2b\leq4}\|\t^a\partial_t^bv\|_{L^2_tL^2_x}^2
+\sum_{a+2b\leq3}\|\t^a\partial_t^bv\|_{C^0_tL^2_x}^2\\
\displaystyle
+\sum_{b=0}^2\|\g_t^bq\|_{C^0_tH^{4-2b}_x}^2+\sum_{b=0}^2\|\g_t^bq\|_{L^{2}_tH^{5-2b}_x}^2
+\sum_{b=0}^2|\g_t^bh|_{C^0_tH^{4-2b}_x}^2+\sum_{b=0}^1|\g_t^bh_t|_{L^2_tH^{3-2b}_x}^2\,,
\end{array}
\ee
where the time integrals in the $L^2$-norms  above are over the time-interval $[0,t]$.   We will show that $ \mathcal{E} (t)$ remains bounded on $[0,T]$.

\subsubsection{The Taylor sign condition}
In order to obtain a locally well-posed problem  for arbitrarily large initial data, we must impose the Taylor sign condition on the initial data as follows: 
\be
\label{eq:taylor00}
-\g_np_0>0\quad\text{on}\,\,\,\,\Gamma(0) \,.
\ee
Expressed in terms of $q(0, \cdot )$, \eqref{eq:taylor00} is written as
 \be\label{eq:taylor0}
\left.q_0,_2\right|_{t=0} > 0 \text{ on } \Gamma \,.
\ee
The condition~(\ref{eq:taylor0}) ensures that
\be\label{eq:taylor}
\inf_{x'\in\Gamma}q,_2(t,x',0)>0, \ \  t\in [0,T]
\ee
if $T>0$ is taken sufficiently small.
As mentioned in Section~\ref{se:form}, we shall refer to~(\ref{eq:taylor0}) as the {\em Taylor sign condition} in analogy 
to the terminology used in the well-posedness theory in fluid mechanics~\cite{Tay, Ray}. The Taylor sign condition
will provide positivity  of the natural energy functional. 

\begin{remark}
Note that $q,_2=gJ^{-1} v\cdot n$ on $\Gamma$,  with $v$ defined by~(\ref{eq:ALE0}). By~\eqref{E:HEVOLUTION},  we conclude that
 $h_t = J q,_2$ at time $t=0$. Since the Jacobian remains positive on a short interval of time
the Taylor sign condition~\eqref{eq:taylor00}  shows that 
$h_t(0,x)<0$ for all $x\in\mathbb T^1.$ Thus, the domain $\Omega(t)$ expands on a short interval of time.
\end{remark}

\subsubsection{Compatibility conditions}\label{S:CC}
To ensure that the solution is continuously differentiable with respect to $t$, at $t=0$, we must impose  
compatibility conditions on the initial data. In particular, restricting~(\ref{eq:ALEheat}) to $\Gamma$ and 
evaluating at time $t=0$, for $H^4$ initial data,  we find that
\begin{subequations}
\label{eq:comp}
\begin{alignat}{2}
q_0&=0&& \ \text{ on } \ \Gamma\,,\label{E:COMP1}\\
\Delta_{\Psi_0}q_0+J_0^{-2}g_0^2(q_0),_2^2&=0&&\ \text{ on } \ \Gamma. \label{E:COMP2}
\end{alignat}
\end{subequations}

In the derivation of~\eqref{E:COMP2} we have crucially used~\eqref{E:UNITNORMAL} and the identity 
\begin{align*}
v(0,\cdot)\cdot w(0,\cdot)\big|_\Gamma & = - A^k_iq_{0,k}\g_t\Psi^i_t(0,\cdot) = -q_{0,2}A^k_i N,_k \Psi^i_t (0,\cdot)\\
& = J_0^{-1}g_0^{-2}q_{0,2} \Psi_t(0,\cdot)\cdot n_0 = J_0^{-1}g_0^{-2}q_{0,2} v(0,\cdot)\cdot n_0 \\
& = J_0^{-2}g_0^2(q_0),_2^2.
\end{align*}

Here $J_0, g_0$ are the initial values of $J,g$ defined in~\eqref{E:J},~\eqref{E:G}  respectively.
Conditions~\eqref{eq:comp} are satisfied for a large class of functions. 
Consider simply a function $q_0$ independent of $x_1$ in the slab $ T_\epsilon = \T^1\times[0,\epsilon]$ for some $\epsilon>0$, and of the  form 
\be\label{E:Q0}
q_0(x_1,x_2) = -\alpha^2\frac{x_2^2}{2} + \alpha x_2
\ee 
in $T_\epsilon$ for some $\alpha>0.$ 
Condition~\eqref{E:COMP1} is obviously satisfied, while~\eqref{E:COMP2} reduces to the requirement that
\[
0 = -\Delta_{\Psi_0} q_0 + J^{-2}g^2(q_0),_2^2 =  -|A^2_\bullet|^2 (q_0),_{22} + J^{-2}g^2(q_0),_2^2 =  J^{-2}g^2\left((q_0),_{22} + (q_0),_2^2 \right),
\]
where we have used~\eqref{eq:bullet}. It is easily checked that  for such $q_0$, 
\[
(q_0),_{22} + (q_0),_2^2 = 0 \ \ \text{ on } \ \Gamma \,,
\]
 and therefore the condition~\eqref{E:COMP2} is satisfied.
Note that the assumption $\alpha>0$ ensures the validity of the Taylor sign condition $(q_0),_2>0.$

Since we imposed the homogeneous Neumann condition~\eqref{eq:ALEtop} on the top boundary $\g\Omega_{\text{top}}$, we impose 
the compatibility condition
\be\label{eq:cctop}
(q_0),_2|_{t=0}=0 \ \ \text{on}\quad\g\Omega_{\text{top}}.
\ee
By employing a partition-of-unity of $\Omega$, we can now easily construct a $q_0\in H^4(\Omega)$ such that the compatibility conditions~\eqref{eq:comp} and~\eqref{eq:cctop} are simultaneously satisfied.
\begin{remark}
The quadratic function $q_0$ defined in~\eqref{E:Q0} satisfies the compatibility conditions.  This is one of many possible constructions of initial data satisfying the 
corresponding regularity and compatibility conditions.
\end{remark}

\subsection{Local-in-time well-posedness for the classical Stefan problem}
We define
\be\label{eq:solspace}
\mathcal{S}(t):=\{(q,h):\,\,\,\,\E(q,h)(t)<\infty\}.
\ee
Our first result is a well-posedness statement for the classical Stefan problem.
\begin{theorem}[Well-posedness of the classical Stefan problem]\label{th:main}
Given  initial conditions $(q_0,h_0)\in\mathcal{S}(0)$  with $q_0$ satisfying the Taylor sign condition~(\ref{eq:taylor0}) and the compatibility conditions~\eqref{eq:comp}--\eqref{eq:cctop},
the
problem~(\ref{eq:ALE}) is locally-in-time well-posed, i.e.
there is a $T>0$ such that 
and a unique solution $(q,h)$  on  
the time interval $[0,T]$ with  initial data $(q_0,h_0)$,  such that
$$
\sup_{t \in [0,T]} 
\E(q,h)\leq 2\E(q_0,h_0).
$$
\end{theorem}
\begin{remark}\label{re:mainth}   
The definition of our higher-order energy function $\E$ restricted to time $t=0$ 
requires an explanation of time-derivates of $q$ and $h$ at $t=0$.   Specifically,
the values $q_t|_{t=0}$, 
$q_{tt}|_{t=0}$, $h_t|_{t=0}$ and $h_{tt}|_{t=0}$ are defined via
space-derivatives using equations~(\ref{eq:ALEheat}) and~(\ref{eq:ALEneumann}).
\end{remark}

\subsection{The vanishing surface tension limit}
Our second main result establishes the vanishing surface tension limit. 
Denoting by $\mathcal H$ the mean curvature of the free-boundary, in the $\Psi$-parametrization, the 
 boundary condition~(\ref{eq:ALEdirichlet}) is replaced with 
\be\label{eq:ALEdirsigma}
q=\mathcal H = \sigma\frac{\t^2\Psi\cdot n}{|\t^2\Psi|^2}=-\sigma\frac{\t^2h}{(1+|\t h|^2)^{3/2}}\,;
\ee
then,  the problem~(\ref{eq:ALE}), with the boundary condition (\ref{eq:ALEdirsigma}) replacing (\ref{eq:ALEdirichlet}), 
is the {\it Stefan problem with surface tension}  formulated in harmonic coordinates.
The high-order energy function adapted 
for the presence of surface tension is given by
\be\label{eq:energysigma}
\E^{\sigma}=\E^{\sigma}(q,h)=\E(q,h)
+\sigma\sum_{b=0}^2|\partial_t^bh|_{C^0_tH^{5-2b}_x}^2
+\sigma\sum_{b=0}^1|\partial_t^bh_{t}|_{L^2_tH^{4-2b}_x}^2
+\sigma^2\sum_{b=0}^2|\partial_t^b h|_{L^\infty_tH^{4-2b}}^2, \ \ \sigma>0.
\ee

\subsubsection{Compatibility conditions for the Stefan problem with surface tension}

To ensure the spatial continuity of the temperature function $q$ and its first time derivative $q_t$ at time $t=0$, we must impose two sets 
of compatibility conditions. 
The first condition is
\be\label{E:CCSIGMA1}
q_0 = \sigma\mathcal H_0 = - \sigma g_0^{-3}\t^2h_0 \ \ \text{ on } \ \Gamma,
\ee
where $\mathcal H_0$ denotes the mean curvature of the initial free surface $\Gamma_0$, and $g_0 = \sqrt{1+(\t h_0)^2}.$
To obtain the second compatibility condition, we note that 
$
q_t\big|_{\Gamma} = - \sigma\partial_t\mathcal H\big|_{t=0} . 
$
From the boundary condition~\eqref{eq:ALEneumann} we can evaluate $h_t$ at time $t=0$ as
\be\label{E:CCSIGMA2}
h_t\big|_{t=0} = - g_0 \nabla_{\Psi_0}q_0\cdot n_0,
\ee
where the subscript $0$ refers to the initial values of the quantities $g,\Psi,q,$ and $n$ defined above.
Therefore, 
\begin{align}
\partial_t\mathcal H\big|_{t=0} 
& = - g_0^{-3}\t^2h_{t}\big|_{t=0} + 3g_0^{-5}\t^2h_0\t h_{t}\big|_{t=0}\t h_0 \notag\\ 
& =  g_0^{-3} \t^2(g_0 \nabla_{\Psi_0}q_0\cdot n_0) + 3q_0g_0^{-2}\t(g_0 \nabla_{\Psi_0}q_0\cdot n_0) \t h_0, \label{E:CCSIGMA3}
\end{align}
where we have used~\eqref{E:CCSIGMA1} and~\eqref{E:CCSIGMA2} in the last line.
After restricting~\eqref{eq:ALEheat} to $\Gamma$ at time $t=0$ and using~\eqref{E:CCSIGMA3},  we find that
\[
- \sigma\left(g_0^{-3} \t^2(g_0 \nabla_{\Psi_0}q_0\cdot n_0) + 3q_0g_0^{-2}\t(g_0 \nabla_{\Psi_0}q_0\cdot n_0) \t h_0\right)-\Delta_{\Psi_0}q_0 
 = -g_0 \left(\nabla_{\Psi_0}q_0\cdot n_0 \right) (A_0)^k_2q_{0,k}.
\]
In particular, the right-hand side can be separated into the $\sigma$-depenendent and $\sigma$-independent contributions, so that 
\[
-g_0 \left(\nabla_{\Psi_0}q_0\cdot n_0 \right) A^k_2q_{0,k} = J_0^{-2} g_0^2 (q_0),_2^2 + \sigma\t\mathcal H_0 \left(g_0 \left(\nabla_{\Psi_0}q_0\cdot n_0 \right) (A_0)^1_2 
+ g_0 \left((A_0)^1_\bullet \cdot n_0 \right) A^2_2(q_0),_2\right) \,.
\]
Combining the two previous identities, we find the second compatibility condition to be
\be\label{E:CCSIGMA4}
\Delta_{\Psi_0} q_0 + J_0^{-2} g_0^2 (q_0),_2^2 = \sigma \mathcal C(q_0,h_0),
\ee
where
\begin{align}
\mathcal C(q_0,h_0) : = & -\left[g_0^{-3} \t^2(g_0 \nabla_{\Psi_0}q_0\cdot n_0) + 3q_0g_0^{-2}\t(g_0 \nabla_{\Psi_0}q_0\cdot n_0) \t h_0\right] \notag \\
& -\t\mathcal H_0 \left[g_0 \left(\nabla_{\Psi_0}q_0\cdot n_0 \right) (A_0)^1_2 
+ g_0 \left((A_0)^1_\bullet \cdot n_0 \right) (A_0)^2_2(q_0),_2\right] \label{E:CCSIGMA5}
\end{align}
\subsubsection{Initial data satisfying compatibility conditions}
When $\Psi = \text{Id}$ (and therefore $h_0(x)=0,$ $g_0=J_0=1$) the compatibility conditions~\eqref{E:CCSIGMA1} and~\eqref{E:CCSIGMA4}--\eqref{E:CCID2} simplify significantly and take the form
\begin{align}
q_0 = 0 \ \text{ and } \   (q_0),_{22} + (q_0),_2^2 & =  \sigma (q_0),_{211} \ \ \text{ on } \ \Gamma \label{E:CCID1}\\
 (q_0),_{222}, \ (q_0),_{211} & \in C^0(\Gamma).\label{E:CCID2}
\end{align}
It is easy to check that the function $q_0$ constructed in Section~\ref{S:CC} satisfies~\eqref{E:CCID1}--\eqref{E:CCID2}.
For general $h$ satisfying $|h|_{4}\ll 1$ we can construct the initial temperature $q_0$ 
satisfying~\eqref{E:CCSIGMA1} and~\eqref{E:CCSIGMA4}--\eqref{E:CCID2} by perturbative methods, using for instance the implicit function theorem.

\subsubsection{Well-prepared initial data}

To obtain the vanishing surface tension limit, we need to define a suitable class of  initial data $(q_0^\sigma,h_0^\sigma),$ $\sigma\ge0.$

\begin{definition}[Well-prepared data]\label{def:wellprep}
A family of initial data $(q^{\sigma}_0, h^{\sigma}_0)_{\sigma\geq0}$ such that $\E(q_0^\sigma,h_0^\sigma)<\infty$
is well-prepared if it satisfies {\bf 1)} compatibility
conditions~\eqref{E:CCSIGMA1},~\eqref{E:CCSIGMA4}--\eqref{E:CCID2} associated to the Stefan problem with surface tension,
{\bf 2)} the Taylor sign condition~\eqref{eq:taylor0}, and  
{\bf 3)} $\E(q^{\sigma}_0-q_0,h^{\sigma}_0-h_0)\to0$ as $\sigma\to0$.
\end{definition}

We now demonstrate that the class of well-prepared initial data is non-empty. Let us assume for simplicity that $\Psi_0 = \text{Id}$ and therefore the initial hypersurface $\Gamma_0$ is flat. 
For $\sigma\ge0$ we have $h^\sigma(x')=0,$ $x'\in\mathbb T^1.$ Let $b:\mathbb T^1\to\mathbb R$ be a given smooth function and $\alpha>0$ a given positive real number.
Consider a function $q_0^\sigma$ independent of $x_1$ in the slab $ T_\epsilon = \T^1\times[0,\epsilon]$ for some $0<\epsilon<1$ and of the  form 
\[
q_0^\sigma(x_1,x_2) = -\alpha^2\frac{x_2^2}{2} + \alpha x_2 + \sigma b(x_1)x_2^3, \ \ (x_1,x_2)\in T_\epsilon.
\] 
It is straightforward to check that conditions~\eqref{E:CCID1}--\eqref{E:CCID2} are both satisfied with this choice of $q_0^\sigma.$ Moreover, 
The Taylor sign condition holds since $q^\sigma_{0,2}=\alpha>0$ for any $\sigma\ge0$ and the convergence requirement {\bf 3)} in Definition~\ref{def:wellprep} is clearly satisfied.
Outside the slab $T_\epsilon$ we can extend the function $q^\sigma_0$ smoothly so that the Neumann boundary condition $\g_Nq^\sigma_0$ is satisfied on $\partial\Omega_{\text{top}}$.

\subsubsection{The vanishing surface tension limit}

For a given $T>0$ let
\begin{align}
C^1_tC^0_x\cap C^0_tC^2_x:=& \Big\{(q,h):\,\,\,q\in C^1([0,T];C^0(\Omega))\cap C^0([0,T];C^2(\Omega)), \notag \\
& \ \ h\in C^1([0,T];C^0(\Gamma))\cap C^0([0,T];C^2(\Gamma))\Big\} \label{eq:c12}
\end{align}
with the associated norm:
\begin{align}
\|(q,h)\|_{C^1_tC^0_x\cap C^0_tC^2_x} =& \max_{t\in[0,T],x\in\Omega}\left(|q(t,x)|+|\g_tq(t,x)|+|\nabla q(t,x)|+|\nabla^2q(t,x)|\right) \notag \\
&+\max_{t\in[0,T],x'\in\Gamma}\left(|h(t,x')|+|\g_th(t,x')|+|\t h(t,x')|+|\t^2h(t,x')|\right). \label{E:QHNORM}
\end{align}


\begin{theorem}[The limit of zero surface tension]\label{th:main2}
Let $(q_0^{\sigma}, h_0^{\sigma})_{\sigma\geq0}$ be a sequence of well-prepared initial conditions
in the sense of Definition~\ref{def:wellprep} such that
$$
\E(q^{\sigma}_0-q_0,h^{\sigma}_0-h_0)\to0 \text{  as } \sigma\to 0 \,.
$$

Let
$(q^{\sigma}(t, \cdot ),h^{\sigma}(t, \cdot ))_{\sigma\geq0}$  denote the corresponding sequence of solutions to the 
Stefan problem with surface tension, such that $( q^\sigma(0, \cdot ), h^\sigma(0, \cdot )) = (q_0^{\sigma}, h_0^{\sigma})$.
Then, there exists a $\sigma$-independent time $T>0$ and a constant $C$ depending only on $(q_0,h_0)$ such that
\[
\E^{\sigma}(q^{\sigma},h^{\sigma})(T)\leq C\quad\sigma\geq0.
\]
for all $\sigma\geq0$.  

Furthermore,  the sequence $(q^{\sigma},h^{\sigma})$ converges in the ${C^1_tC^0_x\cap C^0_tC^2_x}$-norm
to the unique solution $(q,h)$ of the classical Stefan problem~(\ref{eq:ALE}) with 
$\sigma=0$ and the initial data $(q(0),h(0))=(q_0,h_0)$. 
\end{theorem}




%
\subsection{Prior results and a motivation for the current treatment}
There is a large literature on the classical one-phase Stefan problem. For a comprehensive 
overview, 
we refer the reader to \textsc{Meirmanov}~\cite{Me} and \textsc{Visintin}~\cite{Vi}. The first  {\em weak} solutions were defined by \textsc{Kamenomostskaya}~\cite{Ka},  \textsc{Ladyzhenskaya, Solonnikov}
and \textsc{Uralceva}~\cite{LaSoUr}.  These weak solutions  were analyzed  by \textsc{Friedman, Kinderlehrer}~\cite{Fr82},~\cite{Fr89},~\cite{FrKi},
\textsc{Cafarelli, Evans}~\cite{Ca78},~\cite{CaEv}, wherein the
 regularity of weak solutions was established.   Since the problem satisfies a maximum principle,
it is ideally suited to the so-called {\it viscosity solutions} approach. Existence and regularity of
viscosity solutions was established by \textsc{Athanasopoulos, Caffarelli, and Salsa} in~\cite{AtCaSa1},~\cite{AtCaSa2}.
Existence of viscosity solutions in the one-phase case was proven by 
\textsc{Kim}~\cite{Ki} and in the two-phase case by \textsc{Kim and Po\v zar}~\cite{KiPo}. 
A local-in time regularity theorem was proven in~\cite{sCiK2010} which in particular shows that initially
Lipschitz free-boundaries become $C^1$ over a possibly smaller spatial region.
For  an exhaustive 
overview and introduction to the regularity theory of such solutions we refer the reader to \textsc{Caffarelli and Salsa}~\cite{CaSa}, see also more recent results~\cite{sCiK2010,sCiK2012}. 

Local existence of classical solutions for the classical Stefan problem was shown by 
\textsc{Meirmanov} (see~\cite{Me} and references therein) and \textsc{Hanzawa}~\cite{Ha}. 
In the first approach, the author regularizes the problem by adding artificial viscosity to~(\ref{eq:neumann}) and fixes 
the moving domain by switching to so-called von Mises variables.  The obtained solutions however,
lose derivatives with respect to the assumed regularity on the initial data. Similarly, in~\cite{Ha}
the author uses Nash-Moser iteration to obtain a local-in-time solution, however again with a significant
derivative loss with respect to the initial data. 
A local existence result for the one-phase n-dimensional Stefan problem is proved in~\cite{FrSo}, 
where the required 
regularity class for the temperature function is $W^{2,1}_p$ with $p>n+2$. For the two-phase
Stefan problem a local existence result is presented in~\cite{PrSaSi} in the framework
of $L^p$ maximal regularity, where the corresponding functional spaces of Sobolev-type require
$p>n+3$, where $n$ is the dimension of the ambient space.

In  related work, local  and global existence for the one-phase and two-phase Muskat problems  has been established in 
\cite{CoCoGa,CoCoGa10,CoCoGaSt,ChGrSh2016}.   For the local and global well-posedness of the
one-phase Hele-Shaw problem and optimal decay rates of the solutions, see \cite{ChCoSh2014} and the references therein.

As to the Stefan problem with surface tension (also known as the Stefan problem with Gibbs-Thomson correction), a
global weak existence theory (without uniqueness) is given in~\cite{AlWa,Lu,Roe}.
In \cite{FriRe} the authors consider the
Stefan problem with small surface tension i.e. $\sigma\ll1$ whereby~(\ref{eq:dirichlet})
is replaced by $v=\sigma\kappa$.
Local existence of classical solutions is studied in \cite{Ra}.
In \cite{EsPrSi} the authors
prove a local existence and uniqueness result for classical solutions 
under a smallness assumption on the initial datum
close to flat hypersurfaces. Global existence close to flat hyper-surfaces
is proved in~\cite{HaGu1} and close to stationary spheres for the
two-phase problem in~\cite{Ha1} and later in~\cite{PrSiZa}.

With the Gibbs-Thomson correction, problem~(\ref{eq:stefan}) 
can account for phenomena such as
the phase nucleation, undercooling (superheating) and it is also used in modeling crystal growth~\cite{Vi}. It is
a small-scale model as opposed to the macro-scale classical Stefan problem.
In this sense, there is a fundamental importance in rigorously understanding the link between the two models. 
As explained in~\cite{Vi},~\cite{Vi96}, one can associate a free energy to the Stefan problem with surface tension defined by
\[
F_{\sigma}(\tilde{p},\tilde{\Gamma})=\int_{\Omega}\tilde{p}\,dx+\sigma|\tilde{\Gamma}|,
\] 
where $\tilde{p}$, $\tilde{\Gamma}$ are time-independent. 
Then in the the sense of $\Gamma$-convergence of De Giorgi~ \cite{De}, the free energy
$F_{\sigma}(\tilde{p},\tilde{\Gamma})$ converges to the free energy for the classical Stefan problem, see \cite{Vi}.
This is, however, a completely time-independent consideration and does not address the vanishing $\sigma$-limit of 
time-dependent solutions to the full non-linear problem~(\ref{eq:stefan}).
In the context of the water wave problem, the vanishing surface tension limit in two and three dimensions
has been studied in~\cite{AmMa09, AmMa05}; for the full Euler equations, see \cite{CoHoSh2014}.
\par
Turning our attention to the Stefan problem, we can observe that 
there are two parallel developments in the existence theory for weak solutions briefly mentioned above.
The first one applies to the classical Stefan problem and it is motivated by the validity of maximum principle; suitable notions of weak and viscosity
solutions have been established~\cite{AtCaSa1, AtCaSa2, LaSoUr, FrKi, Ca78}. 
The second development refers to the problem with surface tension, wherein the weak solution
existence results are in BV-type spaces, and rely upon the gradient-flow structure of the problem. From the point of view of the
vanishing surface tension, it is natural to ask whether the two concepts are 
compatible in any rigorous mathematical manner. The answer is inconclusive due to a  lack compactness.
While the control of solutions constructed in~\cite{Lu, AlWa} is strong enough to pass to some limit
as $\sigma\to0$, it is too weak to guarantee a sharp interface in the limit. In other words, it is not clear how to 
preclude the formation of so-called mushy 
regions~\cite{Vi}.
\par
We develop a {\em new} energy method for the Stefan problem with and without 
surface tension and prove the vanishing surface tension limit.
The {\em well-posedness} is established in $H^k$ Sobolev spaces using a combination of energy estimates for
tangential derivatives and elliptic-type estimates for added parabolic-type regularity.
Our framework is motivated by the analysis of the free-surface
incompressible Euler equations of \textsc{Coutand and Shkoller}~\cite{CoSh07,CoSh10}.

Precise statements of our results are given in Theorems~\ref{th:main} and~\ref{th:main2}.
The estimates that we use are nonlinear in nature and
they fundamentally exploit the intricate energy structure of the problem.
In particular, no derivative loss occurs with respect to the regularity of the initial data.
This framework is particularly convenient, as it allows us to rigorously establish the vanishing surface tension 
limit locally-in-time, as formulated in Theorem~\ref{th:main2}. In this way, we link
two fundamental models of phase transitions that are valid on different spatial scales, 
thus answering the open question explained above. 
In forthcoming work,
we shall extend our results to the two-phase Stefan problem,  providing the analog of Theorem~\ref{th:main}~\cite{HaNaSh},
while the question of global-in-time stability of steady states using this functional-analytic framework has been addressed in~\cite{HaSh3, HaSh4}.
\subsection{Methodology and outline of the paper}
There are three main ingredients in our approach to the Stefan problem.   First, we replace the study of the heat equation for temperature, with the equation
for a velocity field $u(t, x)$ which satisfies the equation $u + \nabla p =0$.
Second, we  introduce the so-called
Arbitrary Lagrange-Eulerian  (ALE) variables, in which we introduce a family of diffeomorphisms $\Psi(t, \cdot ) : \Omega \to \Omega(t)$ which
fix the moving domain.
 With respect to this change-of-variables, we define, respectively,  the new velocity  and temperature fields
$v= u \circ \Psi$ and  $q = p \circ \Psi$; in these variables,  the velocity equation becomes $v + \nabla p \circ \Psi =0$.   This equation contains the geometry of the evolving
free-boundary, and by the use of energy estimates for tangential derivatives, 
we are able to naturally estimate the second-fundamental form as
\be\label{eq:trans}
\int_{\Gamma}q,_2|\t^k\Psi\cdot A^2_{\bullet}|^2\,dx'\approx\int_{\Gamma}q,_2|\t^kh|^2\,dx',
\ee
for $k$ some positive integer. 
In the original Eulerian framework~(\ref{eq:stefan}), 
the energy dissipation law is given by
\[
\frac{1}{2}\frac{d}{dt}\int_{\Omega(t)}p(t,x)^2\,dx+\int_{\Omega(t)}|\nabla p(t,x)|^2\,dx=0.
\]
This basic energy law is to weak to control the regularity  of the evolving free-boundary.
Observe that our higher-order control of the free-boundary given by \eqref{eq:trans}  naturally produces the stability condition; in particular,  the Taylor sign condition~(\ref{eq:taylor0}) arises as coefficient to the second-fundamental form, and it sign determines either the control or growth of the curvature and its
derivatives via a Gronwall-type inequality.

A further subtlety consists in the discovery of another coercive energy term which is defined on
the whole domain $\Omega$ (phase), displayed in the fourth line of~(\ref{eq:coercive}).
It contains terms of the general form
\[
\|\t^a\partial_t^bq+\t^a\partial_t^b\Psi\cdot v\|_{L^{\infty}_tL^2_x}^2\,\,\,\,\text{and}
\,\,\,\,\|\t^a\partial_t^bq_t+\t^a\partial_t^b\Psi_t\cdot v\|_{L^2_tL^2_x}^2
\]
for $a,b$ as in~(\ref{eq:coercive}). They are {\em intrinsically} linked to the problem and contain information about
the regularity of the divergence of  the velocity $v$.
Taking $a=0$ and $b=1$,  the first term above becomes the norm of the ALE-divergence of $v$, as it is easily seen
from~(\ref{eq:ALEheat}):
\[
\|q_t+v\cdot w\|_{L_t^{\infty}L_x^2}=\|\div_{\Psi}v\|_{L_t^{\infty}L_x^2}.
\]
The gauge condition~(\ref{eq:gauge}) allows us to get 
optimal Sobolev regularity for $\Psi$ and hence for  the temperature function $q$. 
This allows us to prove that the energy $\E$ defined in~(\ref{eq:energy}) is in fact
bounded by the coercive quadratic form (the ``natural energy") $\F$~(\ref{eq:coercive}) dictated by the
Stefan problem.
\par
Condition~(\ref{eq:taylor00}) is the exact equivalent of the Taylor sign condition, necessary for well-posedness of free-surface incompressible 
Euler equations without surface tension~\cite{CoSh07} or the water wave problem~\cite{Wu97}. 
If the initial temperature $q_0$ is nonnegative, it is implied by the Hopf's lemma, at least over a short period of time. 
In a short time regime, we prove a uniform lower bound on $\lambda$ (cf.~(\ref{eq:taylor})), thus enabling us to close the estimates.
\par
In many free-boundary problems, constructing the solution is in general
a challenging problem despite the (possible) availability of good a-priori estimates. 
Our main technical idea to make the construction as straightforward as possible, 
is to regularize the problem via {\em horizontal convolution by layers}
as introduced 
in~\cite{CoSh07} in the study of well-posedness of the incompressible Euler equation on a 
moving domain. In addition to that, we also regularize the Stefan condition $p=0$ on $\Gamma(t)$ 
by modifying it into a Robin-type condition. If $\kappa>0$ is a suitable regularization parameter, to each $\kappa$ we shall associate an
energy functional $\E_\kappa$ which will be shown to satisfy the following energy inequality:
\[
\E_{\kappa}(t)\leq C\E_{\kappa}(0)+C(t+\sqrt{t})P(\sqrt{\E_{\kappa}})
\]
where $P$ is some with the leading order cubic contribution.
Such a polynomial inequality, through a continuity argument
leads to uniform-in-$\kappa$ time of existence $[0,T]$ and the bound
\[
\E_{\kappa}(t)\leq 2C\E_{\kappa}(0).
\]
Passing to the limit as $\kappa\to0$, we recover the solution of the Stefan problem~(\ref{eq:ALE}). Our 
regularization is intrinsic to the problem and it does not rely on formulating a sequence of iterated linear problems.
\par
The second part of this work focuses on the problem of the vanishing surface tension limit. Once 
the well-posedness framework of Theorem~\ref{th:main} is set-up, the idea is rather straightforward.
Namely, at the level of energy, the presence of surface tension simply augments the high-order
energy functional by a $\sigma$-dependent contribution coming from the 
boundary $\Gamma$, so to obtain~(\ref{eq:energysigma}).
The goal is to prove a uniform-in-$\sigma$ upper bound on $\E^{\sigma}$ on a $\sigma$-independent time 
interval $[0,T]$. This is made possible by one fundamental property of $\E^{\sigma}$:
it distinguishes between two boundary energy contributions of general forms
\[
|\sqrt{q,_2}\t^a\g_t^bh|_0^2 \ \text{ and } \ \sigma|\t^{a+1}\g_t^bh|_0^2
\]
for suitable $a,b\in\N_0$. Since the error terms are at least of cubic order, we can afford 
to estimate {\em all} lower order terms in terms of the 
{\em $\sigma$-independent}
energy term, while the two terms with highest number of derivatives get bounded via the {\em $\sigma$-dependent} 
energy contribution. With uniform estimates in hand, we can pass to the limit as $\sigma\to0$.
\par
The plan of the paper is as follows. In Section~\ref{se:reg} we introduce the $\kappa$-regularized problem and the 
associated high-order energy $\E_{\kappa}$. We then state the energy identities (Lemma~\ref{lm:i1}), 
prove that $\E_{\kappa}$ is controlled by
the natural energy $\F_{\kappa}$ (Proposition~\ref{pr:basic}) and finally prove Lemma~\ref{lm:i1}.
In Section~\ref{se:est1}, we provide the energy estimates for the error terms. Passage to the vanishing surface tension limit is explained
in Section~\ref{se:vanish}. In Section~\ref{se:3D} we explain how to extend our results to the three dimensional setting.
\section{Local well-posedness for the classical Stefan problem}
\subsection{A  nonlinear regularization of the Stefan problem:  the $\kappa$-problem}\label{se:reg}
We regularize the problem by using the {\em horizontal convolution by layers}, introduced 
in~\cite{CoSh07} in the study of well-posedness of the incompressible Euler equation on a 
moving domain. 
\begin{definition}[Horizontal convolution-by-layers]\label{D:CONVOLUTION}
Let $\rho_{\kappa}$ be a $C^{\infty}(\R)$-bump function supported in a ball of radius $\kappa$ defined through:
$
\rho_{\kappa}(x):=\frac{1}{\kappa}\rho(\frac{x}{\kappa}),
$
where 
\be
\label{eq:change1}
\rho(x)=\left\{
\begin{array}{l}
c_*e^{-1/(1-|x|^2)},\quad|x|<1,\\
0\quad |x|\geq1
\end{array}
\right.
\ee
and constant $c_*$ is such that $\int_{\R}\rho(x')\,dx'=1$.
For any given $g:\Omega\to\R$ we define the horizontal convolution by layers of $g$
via
\[
\Lambda_{\kappa}g(x^1,x^2):=\int_{\Gamma}g(x^1,x^2)\rho_{\kappa}(x^1-x')\,dx'.
\]
\end{definition}

We also define the standard 2-D sequence of mollifiers: 
$\eta_ \kappa (x) = \kappa^{-2} \eta(x/\kappa)$ 
where 
$\eta(x) = c_*e^{-1/(1-|x|^2)}$ 
for 
$|x|<1$ 
and 
$\eta(x)=0$ 
for 
$|x|\ge 1$, 
and $c_*$ is chosen so that $\int_{ \mathbb{R}  ^2} \eta(x) dx =0$.
To formulate the regularized problem, we introduce the following quantities:
\[
\Psi_{\kappa}(t,x')=(x',h_{\kappa}(t,x'))\,\,\,\,\text{with}\,\,\,\,
h_{\kappa}(t,x'):=\Lambda_{\kappa}\Lambda_{\kappa}h(t,x'),\,\,\,\, x'\in\Gamma
\]
and we define $\Psi_{\kappa}$ on $\Omega$ as a harmonic extension of its boundary value on $\Gamma$ 
as in~(\ref{eq:gauge}).
Analogously to~\eqref{DM} the following trace estimate is true:
\be\label{E:DMKAPPA}
\|\g_t^a\Psi_\kappa\|_{H^s(\Omega)}\lesssim \|\g_t^a\Psi_\kappa\|_{H^{s-0.5}(\Gamma)}, \ \ s>0.5, \ a\in\mathbb N.
\ee
We also denote $J_{\kappa}:=\det \nabla\Psi_{\kappa}.$
Furthermore,
\[
\A:=[\nabla \Psi_{\kappa}]^{-1};\quad\w:=\partial_t\Psi_{\kappa }.
\]
In analogy to~(\ref{eq:bullet}), we introduce 
\be\label{eq:bulletk}
\A^2_{\bullet}:=\frac{(\t h_{\kappa},-1)}{J_{\kappa}};\quad \a:=J_{\kappa}^{-1}\A.
\ee

For $\kappa >0$, 
we now define a nonlinear regularization of the Stefan problem, which we call the $ \kappa $-problem (\ref{eq:ALE}), in which the coefficients are smoothed by use of the
horizontal convolution operator $ \Lambda _ \kappa $.   On a time interval  $[0,T_ \kappa ]$, the $ \kappa $-problem is given as
\begin{subequations}
\label{eq:reg}
\begin{alignat}{2}
q_t- \Delta _{ \Psi_\kappa} q&=-v\cdot \w + \alpha  && \ \text{ in } \  [0, T_ \kappa ]\times \Omega  \,, \label{eq:heatreg} \\
v^i+\A^k_iq,_k&=0&& \ \text{ in } \ [0, T_ \kappa ]\times \Omega \,,\label{eq:veq}\\
q&=-\kappa^2 v\cdot \a^2_{\bullet}+\kappa^2\beta(t,x')&& \ \text{ on } \ [0, T_ \kappa ]\times \Gamma \,,\label{eq:bdryreg}\\
\Psi_t\cdot n_{\kappa}&=-v\cdot n_{\kappa}&& \ \text{ on } \ [0, T_ \kappa ]\times \Gamma\,,\label{eq:evolreg}\\
v\cdot N&=0&& \ \text{ on } \   [0, T_ \kappa ] \times\g\Omega_{\text{top}}\,,\label{eq:topreg}\\
\Psi(0,\cdot)=\Psi_0& \ \   q(0,\cdot)=Q^\kappa_0,&& \,\label{eq:initialreg}
\end{alignat}
\end{subequations}
where 
\begin{subequations}
\label{E:GAUGEKAPPA}
\begin{alignat}{2}
\Delta \Psi_\kappa(t,\cdot)&=0&& \ \text{ in } \ \Omega,\label{eq:diff}\\
\Psi_\kappa(t,x',0)&=(x',h_\kappa(t,x'))&& \ \,\, x'\in\Gamma,\label{eq:damo1}\\
\Psi_\kappa(t,\cdot)&=\text{Id}&& \ \text{ on } \ \partial\Omega_{\text{top}},
\end{alignat}
\end{subequations} 
$ \Delta _{ \Psi_\kappa} q :=  \A_i^j\big(\A^k_iq,_k\big),_j$, and the time-independent forcing function $ \alpha (x)$ is given by\
\be\label{E:ALPHA}
\alpha = J_0^{-2}g_0^2[{q_0},_2]^2 -J_{\kappa }(0,\cdot )^{-2}g_{\kappa}(0, \cdot )^2 [{q_0},_2]^2.
\ee
Here 
\[
g_\kappa(t,x): = \sqrt{1+(\t h_\kappa(t,x))^2}\,,
\]
and $\beta(t,x')$ is defined as
\be\label{eq:beta}
\beta(t,x'):=\sum_{k=0}^2\frac{t^k}{k!}\g_t^k(v\cdot\a^2_{\bullet})|_{t=0} \,.
\ee
Note that we use the subscript and superscript $ \kappa $ on dependent variables in which  there is  explicit use of  the horizontal convolution operator 
$ \Lambda _ \kappa $; of course, all of the $q$, $h$, and $\Psi$ all implicitly depend on $ \kappa $ as well, but for notational convenience, we do not indicate this
implicit dependence on $\kappa $.

The presence of the horizontal mollification operator $ \Lambda _ \kappa $ in the approximate $\kappa$-problem changes the compatibility conditions on the
the initial data.    The addition of the 
 forcing functions $ \alpha (x)$ and $\beta(t,x')$ ensure that the compatibility conditions \eqref{eq:comp} are modified to be
\begin{subequations}
\label{eq:comp-kappa}
\begin{alignat}{2}
Q^ \kappa _0&=0&& \ \text{ on } \ \Gamma\,,\\
\Delta_{\Psi^\kappa_0} Q^\kappa_0&=- J_0^{-2}g_0^2[{q_0},_2]^2&&\ \text{ on } \ \Gamma \,,
\end{alignat}
\end{subequations}
where $\Psi^ \kappa_0 = \Psi_ \kappa (0, \cdot )$.
The approximated initial temperature function $Q^ \kappa _0$ is then defined as the solution of the fourth-order elliptic equation
\begin{subequations}
\label{Q-kappa}
\begin{alignat}{2}
\Delta _{\Psi^\kappa_0}  \Delta _{\Psi^\kappa_0} Q^ \kappa _0&=  \eta_ \kappa * E(\Delta _{\Psi_0}  \Delta _{\Psi_0} q_0) && \ \text{ in } \ \Omega\,,\\
Q^\kappa_0&=0&&\ \text{ on } \ \Gamma \,, \\
\Delta_{\Psi^\kappa_0} Q^\kappa_0&=- J_0^{-2}g_0^2[{q_0},_2]^2&&\ \text{ on } \ \Gamma \,,
\end{alignat}
\end{subequations}
where $E$ continuously maps $H^k(\Omega)$ to $H^k(\mathbb R^2)$ for all $k\ge0$. 
The fourth-order elliptic equation \eqref{Q-kappa} can be written as a  system of second-order equations given by
\begin{subequations}
\label{Q-kappa2}
\begin{alignat}{2}
 \Delta _{\Psi^\kappa_0} Q^ \kappa _0&=  R_0^\kappa && \ \text{ in } \ \Omega\,,\\
 \Delta _{\Psi^\kappa_0} R^ \kappa _0&=  \eta_ \kappa * E( \Delta _{\Psi_0}  \Delta _{\Psi_0} q_0) && \ \text{ in } \ \Omega\,,\\
Q^\kappa_0&=0&&\ \text{ on } \ \Gamma \,, \\
R^\kappa_0&=- J_0^{-2}g_0^2[{q_0},_2]^2&&\ \text{ on } \ \Gamma \,.
\end{alignat}
\end{subequations}
According to the basic elliptic regularity theorem with Sobolev class coefficients, Theorem 3.6 in \cite{ChSh2014},  we obtain estimates for $R^\kappa_0$ and then
$Q^\kappa_0$ which show that
$$
\| Q^\kappa_0\|_4^2 \le C \E(q_0,h_0) \,,
$$
the constant $C$ being independent of $ \kappa $.   Thus we see that $Q^\kappa_0 \to q_0$ in $H^s(\Omega)$, $s\in[0,4)$, and so we have an approximated
 initial temperature
function $Q^\kappa_0 \in H^4(\Omega)$ which satisfies the compatibility conditions \eqref{eq:comp-kappa}.
 Again, from the elliptic system \eqref{Q-kappa2} and the Sobolev embedding theorem, $Q^\kappa_0 \to q_0$ in 
$C^1(\overline{\Omega})$, and hence the Taylor sign condition~\eqref{eq:taylor0} remains valid for $Q^\kappa_0$, so that
\begin{align}\label{E:TAYLORKAPPA}
(Q^\kappa_0),_2\Big|_{t=0}>0 \ \ \text{ for sufficiently small $\kappa>0.$}
\end{align}

In equation~(\ref{eq:evolreg}), $n_{\kappa}$ denotes the outer unit normal with respect to the regularized
surface $\Gamma_{\kappa}$, i.e. in the coordinate representation 
\[
n_{\kappa}=\frac{(\t h_{\kappa},-1)}{\sqrt{1+|\t h_{\kappa}|^2}} = g_ \kappa ^{-1} (\t h_{\kappa},-1) \,.
\]  
Note that the corresponding unit tangent to $\Gamma_{\kappa}$ is given via
\[
\tau_{\kappa}=\frac{\t\Psi_{\kappa}}{|\t\Psi_\kappa|}=\frac{(1,\t h_{\kappa})}{\sqrt{1+|\t h_{\kappa}|^2}} = g_ \kappa ^{-1} (1,\t h_{\kappa})\,.
\]
In analogy to~\eqref{E:HEVOLUTION},  equation~\eqref{eq:evolreg} can be reformulated as an evolution equation for $h$, given by
\begin{align}\label{E:HEVOLUTION}
h_t(t,x)  = g_\kappa(t,x) v \cdot n_\kappa(t,x), \ \ x\in \mathbb T^1 \,.
\end{align}


%
\begin{remark}[The regularization~\eqref{eq:bdryreg}]
The approximate $\kappa$-problem uses
horizontal convolution by layers together with carefully chosen artificial viscosity terms.
This approximation scheme provides a simple existence theory for the $\kappa$-problem while maintaining the nonlinear energy structure.
\end{remark}
\begin{remark}
We introduce the regularization~(\ref{eq:bdryreg}) to circumvent a technical difficulty of closing the energy estimates
at the level of highest-in-time differentiated problem. The problem arises from the commutation of the horizontal convolution operator
appearing in the terms of the following schematic form:
\[
\int_{\Gamma}\Lambda_{\kappa}\Lambda_{\kappa}\Psi_{tt}\cdot\Psi_{ttt}\,\mathcal{T}\,dx',
\] 
where $\mathcal{T}$ is a lower order term. 
Of course, when performing a-priori estimates (i.e. assuming that the solutions to the {\em original} problem
are smooth enough to justify all the integrations by parts), such an issue does not arise.
\end{remark}
\subsubsection{Solutions to the $\kappa$-problem}
\begin{theorem}\label{kthm}
Let $ \kappa >0$ be fixed. 
Let  $(Q_0^\kappa,h_0)\in H^4(\Omega)\times H^4(\Gamma)$ be given initial data satisfying the compatibility conditions~\eqref{eq:comp-kappa}.
Then there is a time $T_ \kappa $ depending on $ \kappa $, such that there exists a unique
solution $(q,h)= (q( \kappa ), h(\kappa))$ to~(\ref{eq:reg}) on  the time interval $[0,T_ \kappa ]$.   
The solution satisfies
\be\label{E:REGKAPPA}
\sum_{a=0}^2\left(\|\partial_t^aq\|_{C^0_tH^{4-2a}_x} + \|\partial_t^a q\|_{L^2_tH^{5-2a}_x} 
+ \|q_{ttt}\|_{L^2_t(H^{1}_x)'}
+|\partial_t^{a+1}h|_{L^2_tH^{4-2a}_x} \right) +\sum_{a=0}^1|\partial_t^{a+1}h|_{C^0_tH^{3-2a}_x}< \infty \,,
\ee
where $H^1(\Omega)'$ denotes the dual space of $ H^1(\Omega) $.
\end{theorem} 
\begin{proof} 
\def\g{\partial}
We briefly sketch the proof. For  $T_ \kappa$ fixed (and taken sufficiently small) and for  $K>0$, we  define the closed set
\begin{align}\label{E:ZK}
Z_K :=  \Big\{ &h: [0,T]\times\Gamma\to\mathbb R, \, \big| \g_t^a h \in C([0,T_\kappa ],H^{4-{2a}}(\Gamma))\cap L^2([0,T_ \kappa ],H^{5-{2a}}(\Gamma)), \ a=0,1,2, \notag \\
& \sum_{a=0}^2\left(|\g_t^a h|_{C^0_tH^{4-2a}}^2 + |\g_t^{a}h|_{L^2_tH^{5-2a}}^2\right) \le K, \
 \text{ $h_0$ and $Q_0^\kappa$ satisfy compatibility conditions~\eqref{eq:comp-kappa}}\Big\} \,.
\end{align}

Given  $h\in Z_K$, we define $h_ \kappa = \Lambda _\kappa ^2 h$, and then we define its harmonic extension 
$\Psi_ \kappa $ by solving \eqref{E:GAUGEKAPPA}.   We then define the corresponding $\A$, $\a$, and $J_\kappa$, and consider the weak formulation
of the parabolic problem~\eqref{eq:heatreg}--\eqref{eq:bdryreg}:
for all test functions $\phi \in H^1(\Omega)$ and a.e. $t\in [0,T]$,
\begin{align} 
\langle q_t J_ \kappa \,, \phi \rangle + \int_\Omega q,_k \A^k_i \, \A^j_i \phi,_j \, J_\kappa dx + {\frac{1}{\kappa ^2}} \int_\Gamma q\, \phi dx_1
= \int_\Omega q,_k \a^k_i \Phi_t^i \, \phi dx + \int_\Omega  \alpha \, \phi \, J_ \kappa  dx  + \int_\Gamma \beta \phi dx_1 \,,
\label{weakform}
\end{align} 
together with the initial condition
$$
q(0, x)  = Q_0^\kappa (x) \,.
$$
Since $\A$, $\a$, and $J_ \kappa $ are  in $C^ \infty $, and since $\A^k_i \A^j_i \ge  \lambda$ for $ \lambda>0$, and the compatibility conditions are satisfied,
 standard parabolic theory provides the existence
of a unique solution on a short time-interval $[0,T_\kappa ]$ with the desired regularity properties.
In particular  it is a standard argument to establish existence of a unique solution in  $q \in L^2(0,T_\kappa ; H^5(\Omega))$
which satisfies the estimate \eqref{E:REGKAPPA}.  

Using a Galerkin scheme on \eqref{weakform}, we obtain unique solutions in $L^2(0,T_\kappa; H^1(\Omega))$ for $q$, $q_t$, and $q_{tt}$
and also find that $q_{ttt} \in L^2(0,T_ \kappa; H^1(\Omega)')$, where $H^1(\Omega)'$ denotes the dual space of $ H^1(\Omega) $.   Standard parabolic
regularity theory, as in \cite{taylor}, shows that $q \in L^2(0,T_\kappa; H^5(\Omega))$ and that $q_t \in L^2(0,T_\kappa; H^3(\Omega))$.

With this solution  $q$, we define the associated velocity field $v$ using \eqref{eq:veq}. We  then update the height function $h$ as
\[
\Phi(h)(t): = h_0 + \int_0^t  g_\kappa(t) v \cdot n_\kappa(\tau)\, d\tau, \ t\in[0,T].
\]
Choosing $T_\kappa$ sufficiently small, it can be shown that  $\Phi$ maps $Z_K$ into itself, and that $\Phi$  is a contraction map.   The fixed-point 
of $\Phi$ is then a 
 solution to the $ \kappa $-problem \eqref{eq:reg}.
\end{proof} 

\begin{remark} 
A priori, the time of existence $T_ \kappa $ may converge to $0$ as $ \kappa \to 0$.   By obtaining $ \kappa $-independent bounds on 
solutions to  (\ref{eq:reg}), we will prove that, in fact, the time of existence is independent of $ \kappa $ and given by
$T>0$.    
\end{remark} 
\subsection{The higher-order energy function compatible with the $ \kappa \to 0$ asymptotics}
The asymptotically consistent
 higher-order energy function  associated to our sequence of regularized $ \kappa $-problems is given by
\begin{align}
\E_{\kappa} & =\E_{\kappa}(q,h):=\sum_{a+2b\leq4}\|\t^a\partial_t^bv\|_{L^2_tL^2_x}^2
+\sum_{a+2b\leq3}\|\t^a\partial_t^bv\|_{C^0_tL^2_x}^2 \notag \\
&+\kappa^2\sum_{a+2b\leq4}|\t^a\partial_t^bh_t|_{L^2_tL^2_x}^2+\kappa^2\sum_{a+2b\leq3}|\t^a\partial_t^bh_t|_{C^0_tL^2_x}^2\notag \\
&+\sum_{b=0}^2\|\g_t^bq\|_{C^0_tH^{4-2b}_x}^2+\sum_{b=0}^2\|\g_t^{b}q\|_{L^{2}_tH^{5-2b}_x}^2  \notag \\
&+\sum_{b=0}^2|\Lambda_{\kappa}h|_{C^0_tH^{4-2b}_x}^2+\sum_{b=0}^1|\g_t\Lambda_{\kappa}h|_{L^2_tH^{3-2b}_x}^2 \label{eq:energy}
\end{align}


As a consequence of Theorem~\ref{kthm} the map $t\mapsto\E_\kappa(t)$ is continuous on $[0,T_\kappa]$.

\subsection{Bounds on lower-order norms}

 Let 
\begin{align*}
\mathcal A_\kappa(t)=& \sum_{a+2b\le2}\|\t^a\partial_t^bv\|_{L^2_tL^2_x}^2
+\sum_{a+2b\leq1}\|\t^a\partial_t^bv\|_{L^{\infty}_tL^2_x}^2\\
& +\kappa^2\sum_{a+2b\leq2}|\t^a\partial_t^bh_t|_{L^2_tL^2_x}^2+\kappa^2\sum_{a+2b\leq1}|\t^a\partial_t^bh_t|_{L^{\infty}_tL^2_x}^2\\
&+\sum_{b=0}^1\|\g_t^bq\|_{L^{\infty}_tH^{2-2b}_x}^2+\sum_{b=0}^1\|\g_t^bq\|_{L^{2}_tH^{3-2b}_x}^2
+\sum_{b=0}^1|\g_t^b\Lambda_{\kappa}h|_{L^{\infty}_tH^{2-2b}_x}^2+|\Lambda_{\kappa}h|_{L^2_tH^{3-2b}_x}^2.
\end{align*}
We then assume that 
\be\label{E:APRIORI}
\mathcal A_\kappa(t) \le \mathcal E_\kappa(0)+1, \ \ t\in[0,T_\kappa].
\ee
By the fundamental theorem of calculus it is easy to see that 
\[
\mathcal A_\kappa(t) \le \mathcal A_\kappa(0) + t \sup_{0\le s\le t}\E_\kappa(t) \le \E_\kappa(0) + t \sup_{0\le s\le t}\E_\kappa(t).
\]
In Section~\ref{se:proof1} we will prove an a priori bound for $\E_\kappa$ independent of $\kappa$ and show that the time of existence $T$ is independent of $\kappa.$ 
The bound~\eqref{E:APRIORI} will then be justified a posteriori using the fundamental theorem of calculus, smallness of $T_\kappa$, and the definition of $\E_\kappa.$ 
By choosing $T_{\kappa}$ possibly smaller we assume that for certain $\delta>0$
\be\label{eq:assumptions}
 \min_{x'\in \Gamma} q,_2(t,x')>\delta\,\,\,\,\,\text{and}\,\,\,\,\,|\t h_\kappa(t,\cdot)|_{\infty}^2\leq1/2, \ \ t\in[0,T_\kappa],
\ee
where $(q,h)$ is the solution of the $\kappa$-problem~\eqref{eq:reg}. 
The first inequality is true by continuity-in-time of the energy $\E_\kappa$ and the Taylor sign condition~\eqref{E:TAYLORKAPPA}.
The second inequality follows from the from the continuity-in-time and smallness of $|\t h_0|_3$~\eqref{E:INITIAL}.

\subsection{The energy identities}\label{su:identities}

In this section we collect the high-order energy identities in two lemmas stated below. 
We use the notation $\mathcal{T}$ for those error terms which in an easy straightforward way are seen to satisfy 
the energy bound of the form:
\[
\int_0^t|\mathcal{T}(s)|\,ds\lesssim tP(\E_{\kappa});
\]
this bound will then always follow from
the standard $L^{\infty}-L^2-L^2$ type estimates.
Here and in the rest of the paper $P(\cdot)$ stands for a generic polynomial satisfying $P(0)=0.$

\begin{lemma}\label{lm:i1}
Assume that $(q,h)$ is a solution to the regularized Stefan problem~(\ref{eq:reg}) given by Theorem~\ref{kthm}. 
Then the following identities hold:
\begin{enumerate}
\item[(i)]
\begin{align}
&\int_{\Omega}|\t^4v|^2
+\frac{1}{2}\frac{d}{dt}\int_{\Gamma}(-q_{,2})\big|\t^4\Lambda_{\kappa}\Psi\cdot \A^2_{\bullet}\big|^2
+\frac{1}{2}\frac{d}{dt}\int_{\Omega}\big(\t^4q+\t^4\Psi_{\kappa}\cdot v\big)^2\notag \\
&+\kappa^2\int_{\Gamma}J_{\kappa}^{-1}|\t^4h_t|^2
=\int_{\Omega}\mathcal{R}_1+\int_{\Gamma}\mathcal{R}_2+\mathcal{T}; \label{eq:enidentity}
\end{align}

\begin{align}
&\int_{\Omega}|\t^2\partial_tv|^2
+\frac{1}{2}\frac{d}{dt}\int_{\Gamma}(-q_{,2})\big|\t^2\partial_t\Lambda_{\kappa}\Psi\cdot \A^2_{\bullet}\big|^2
+\frac{1}{2}\frac{d}{dt}\int_{\Omega}\big(\t^2\partial_tq+\t^2\partial_t\Psi_{\kappa}\cdot v\big)^2 \notag \\
&+\kappa^2\int_{\Gamma}J_{\kappa}^{-1}|\t^2\g_th_t|^2
=\int_{\Omega}\mathcal{R}_3+\int_{\Gamma}\mathcal{R}_4+\mathcal{T}; \label{eq:en34}
\end{align}

\begin{align}
&\int_{\Omega}|\partial_t^2v|^2
+\frac{1}{2}\frac{d}{dt}\int_{\Gamma}(-q_{,2})\big|\partial_t^2\Lambda_{\kappa}\Psi\cdot\A^2_{\bullet}\big|^2
+\frac{1}{2}\frac{d}{dt}\int_{\Omega}\big(\partial_t^2q+\partial_t^2\Psi_{\kappa}\cdot v\big)^2  \notag \\
&+\kappa^2\int_{\Gamma}J_{\kappa}^{-1}|\g_{tt}h_t|^2
=\int_{\Omega}\mathcal{R}_5+\int_{\Gamma}\mathcal{R}_6+\mathcal{T},  \label{eq:en56}
\end{align}
where $\mathcal{R}_i$, $i=1,\dots 6$, are error terms given below respectively by
~(\ref{eq:remainder}), ~(\ref{eq:remaindergamma}), ~(\ref{eq:r3}), ~(\ref{eq:r4}), ~(\ref{eq:r5}) and~(\ref{eq:r6}).
\item[(ii)]
\begin{align}
&\frac{1}{2}\frac{d}{dt}\int_{\Omega}|\t^3v|^2
+\int_{\Gamma}(-q,_2)|\t^3\Lambda_{\kappa}\Psi_t\cdot\A^2_{\bullet}|^2
+\int_{\Omega}(\t^3q_t+\t^3\w\cdot v)^2 \notag \\
&+\frac{\kappa^2}{2}\frac{d}{dt}\int_{\Gamma}J_{\kappa}^{-1}|\t^3h_t|^2
=\int_{\Omega}\mathcal{S}_1+\int_{\Gamma}\mathcal{S}_2+\mathcal{T}; \label{eq:s12}
\end{align}

\begin{align}
&\frac{1}{2}\frac{d}{dt}\int_{\Omega}|\t v_t|^2
+\int_{\Gamma}(-q,_2)|\t\Lambda_{\kappa}\Psi_{tt}\cdot \A^2_{\bullet}|^2+\int_{\Omega}(\t q_{tt}+\t\w_t\cdot v)^2  \notag \\
&+\frac{\kappa^2}{2}\frac{d}{dt}\int_{\Gamma}J_{\kappa}^{-1}|\t h_{tt}|^2 
=\int_{\Omega}\mathcal{S}_3+\int_{\Gamma}\mathcal{S}_4+\mathcal{T}, \label{eq:s34}
\end{align}
where $\mathcal{S}_i$, $i=1,\dots 4$, are error terms given below respectively 
by~(\ref{eq:sremainder}), ~(\ref{eq:sremaindergamma}), ~(\ref{eq:s3}), ~(\ref{eq:s4}).
\end{enumerate}
\end{lemma}

We postpone the proof of Lemma~\ref{lm:i1} to Section~\ref{su:proofs}. 


\subsection{Equivalence of the higher-order norm $\E_{\kappa}$ and the natural energy function $\F_\kappa$}
By summing the left-hand sides of the identities~(\ref{eq:enidentity})--(\ref{eq:s34}) from Lemma~\ref{lm:i1}, 
the {\em natural} coercive quadratic form $\mathcal{F}_{\kappa}$ that arises as the energy
takes the form
\be
\label{eq:coercive}
\begin{array}{l}
\displaystyle
\F_{\kappa}=\sum_{a+2b\leq4}\|\t^a\partial_t^bv\|_{L^2_tL^2_x}^2
+\frac{1}{2}\sum_{a+2b\leq3}\|\t^a\partial_t^bv\|_{L^{\infty}_tL^2_x}^2\\
\displaystyle
\quad
+\kappa^2\sum_{a+2b\leq4}|J_{\kappa}^{-1/2}\t^a\partial_t^bh_t|_{L^2_tL^2_x}^2+\frac{\kappa^2}{2}\sum_{a+2b\leq3}|J_{\kappa}^{-1}\t^a\partial_t^bh_t|_{L^{\infty}_tL^2_x}^2\\
\displaystyle
\quad+\frac{1}{2}\sum_{a+2b\leq4;}|\sqrt{q,_2}J_{\kappa}^{-1/2}\t^a\partial_t^b\Lambda_{\kappa}h|_{L^{\infty}_tL^2_x}^2
+\sum_{a+2b\leq3}|\sqrt{q,_2}J_{\kappa}^{-1/2}\t^a\partial_t^b\Lambda_{\kappa}h_{t}|_{L^2_tL^2_x}^2\\
\displaystyle
\quad+\frac{1}{2}\sum_{a+2b\leq4;}\|\t^a\partial_t^bq+\t^a\partial_t^b\Psi_{\kappa}\cdot v\|_{L^{\infty}_tL^2_x}^2
+\sum_{a+2b\leq3;}\|\t^a\partial_t^bq_t+\t^a\partial_t^b\Psi_{\kappa t}\cdot v\|_{L^2_tL^2_x}^2.
\end{array}
\ee
The mathematical reason for imposing the Taylor sign condition~(\ref{eq:taylor}) now becomes apparent.
In order for the second line in the definition of $\F_{\kappa}$~(\ref{eq:coercive}) above to make sense
we must have 
\[
\min_{x'\in\Gamma}(q,_2)(t,x',0)>0,
\]
as it was assumed in~(\ref{eq:taylor}) for the (unregularized) classical Stefan problem.
In order to perform the estimates in the next section, it 
is crucial to show that the energy $\E_{\kappa}$ is bounded by $\F_\kappa$.
To prove this statement we first establish the following temperature estimate.


\begin{lemma}\label{L:TEMPERATURE}
Let $(q,h)$ be a solution of the regularized problem~\eqref{eq:reg} given by Theorem~\ref{kthm}.
Assume that the a priori assumption~\eqref{E:APRIORI} holds on $[0,T_\kappa].$ Then
\begin{enumerate}
\item[(i)]
\begin{align}\label{E:ELLIPTIC1}
\sum_{a=0}^2\|\partial_t^aq\|_{L^{\infty}_tH^{4-2a}_x}^2
\lesssim \F_{\kappa} \ \ \text{ on } \ [0,T_\kappa].
\end{align}
\item[(ii)]
\begin{align}\label{E:ELLIPTIC2}
\|q\|_{L^2_tH^{4.5}_x}^2 + \sum_{a=1}^2\|\g_t^aq\|_{L^2_tH^{5-2a}}^2
\lesssim \F_{\kappa} \ \ \text{ on } \ [0,T_\kappa].
\end{align}
\end{enumerate}
\end{lemma}



\begin{proof}
We use elliptic regularity theory and the a priori assumption~\eqref{E:APRIORI} to show that
$\|q\|_{L^\infty_tH^2_x}  \lesssim \F_{\kappa}$  since 
$\Delta_{\Psi_\kappa}q=q_t+v\cdot \w+\alpha,$ $\|q_t\|_{L^{\infty}L^2}\lesssim \F_{\kappa},$
$\|v\|_{L^{\infty}_tL^2_x}\|w\|_{L^{\infty}_tL^2_x}\leq\|v\|_{L^{\infty}_tL^2_x}|h_t|_{L^{\infty}_tH^1_x}\lesssim \F_{\kappa}$, and 
$
\|\alpha\|_{L^\infty_tL^2_x} \lesssim \F_\kappa(0) \lesssim \F_\kappa.
$
Differentiating~(\ref{eq:heatreg}) with respect to $x^j$ ($j=1,2$), we obtain that
$\Delta_{\Psi_\kappa}q,_j=(\A^m_i\A^n_i),_jq,_{mn}+(q_t+v\cdot \w),_j + \alpha,_j$.
Furthermore, since $q,_j=\Psi_{\kappa,j}\cdot v$ we have that
\begin{align*}
\|q,_{jt}\|&\lesssim\|\Psi_{\kappa,jt}\|_{L^{\infty}_tL^{\infty}_x}\|v\|_{L^{\infty}_tL^2_x}+\|\Psi_{\kappa,j}\|_{L^{\infty}_tL^{\infty}_x}\|v_t\|_{L^{\infty}_tL^2_x}\\
&\lesssim|h_t|_{L^{\infty}_tH^2_x}\|v\|_{L^{\infty}_tL^2_x}+\|\nabla\Psi_\kappa\|_{L^{\infty}_tH^{1.5}}\|v_t\|_{L^{\infty}_tL^2_x}
\lesssim\F_{\kappa},
\end{align*}
where we have used the trace bound~\eqref{E:DMKAPPA} and the a priori assumption~\eqref{E:APRIORI}.
Note that
\begin{align*}
\|(v\cdot \w),_j\|_{L^{\infty}_tH^1_x}& \lesssim\|v\|_{L_t^{\infty}H^1_x}\|\w\|_{L^{\infty}_tL^{\infty}_x}+\|v\|_{L^{\infty}_tL^2_x}\|\w,_j\|_{L^{\infty}_tL^{\infty}_x}\\
&\lesssim \|q\|_{L^{\infty}_tH^2_x}|h_{\kappa,t}|_{L^{\infty}_tH^1_x}+\|q\|_1|h_{\kappa,t}|_{L^{\infty}_tH^2_x}\lesssim \F_{\kappa},
\end{align*}
where we have used the bound~\eqref{E:APRIORI} in the last estimate.
It is easy to see that $\|(\A^m_i\A^n_i),_jq,_{mn}\|_{L^{\infty}_tL^2_x}\lesssim \F_{\kappa}\left(1+P(\mathcal A_\kappa\right))\lesssim \F_\kappa,$ where $P$ stands for a generic polynomial.
Finally, $\|\nabla\alpha\|_{L^\infty_t L^2_x}\lesssim \F_\kappa.$
Thus, by the elliptic theory again, we conclude 
\[
\|q\|_{L^{\infty}_tH^3_x}^2\lesssim \F_{\kappa}.
\]
Differentiating~(\ref{eq:heatreg}) with respect to $t$, we obtain
$\Delta_{\Psi_\kappa}q_t=-(\A^m_i\A^n_i),_tq,_{mn}+v_{tt}+(v\cdot \w)_t$
(since $\alpha$ is independent of $t.$). Again, using $\|v_{tt}\|^2_{L^{\infty}_tL^2_x}\lesssim\F_{\kappa}$,
the previous estimates and the bound $|h_{\kappa,t}|_{L^{\infty}_tH^2_x}^2\lesssim \F_{\kappa}$, elliptic regularity
implies $\|q_t\|_{L^{\infty}_tH^2_x}\lesssim\F_{\kappa}$. Furthermore 
$\|q_{tt}\|_{L^{\infty}_tL^2_x}^2\lesssim\|q_{tt}+\w_t\cdot v\|_{L^{\infty}_tL^2_x}^2+\|\w_t\cdot v\|_{L^{\infty}_tL^2_x}^2\lesssim \F_{\kappa}$.
The last equality follows from the third line on the definition~(\ref{eq:energy}) of $\F_{\kappa}$ and a simple bound on the $L^{\infty}_tL^{\infty}_x$-norm 
of $v$, which follows from Sobolev embedding.
Finally, choose any $j,k\in\{1,2\}$. Applying $\g_{x^j}\g_{x^k}$ to~(\ref{eq:heatreg}), we arrive at the elliptic equation
\[
\Delta_{\Psi_\kappa}q,_{jk}=-(\A^m_i\A^n_i),_jq,_{mnk}-(\A^m_i\A^n_i),_kq,_{mnj}-(\A^m_i\A^n_i),_{jk}q,_{mn}+(q_t+v\cdot \w),_{jk} + \alpha,_{jk}.
\]
By the estimates already derived above,~(\ref{eq:energy}),~\eqref{E:ALPHA}, and~\eqref{E:APRIORI}, the right-hand side is bounded by $\F_{\kappa}\left(1+P(\mathcal A_\kappa)\right)\lesssim \mathcal F_\kappa$
in $L^{\infty}_tL^2_x$-norm. Thus, by elliptic regularity, we finally conclude $\|q\|_{L^{\infty}H^4}\lesssim \F_{\kappa}$, concluding the proof
of~\eqref{E:ELLIPTIC1}.

To prove~\eqref{E:ELLIPTIC2} we start with the easiest case $b=2$.
For $j=1,2$, we have
\[
q,_{jtt}=(\Psi_{\kappa,j}\cdot v)_{tt}=\Psi_{\kappa,jtt}\cdot v+2\Psi_{\kappa,jt}\cdot v_t+\Psi_{\kappa,j}\cdot v_{tt}.
\]
From the above we easily infer that
\begin{align}
\int_0^t\|\nabla q_{tt}\|_{0}^2\,d\tau &\lesssim
\int_0^t\left(\|\nabla \Psi_{\kappa,tt}\|_{0}^2\|v\|_{\infty}^2+\|\Psi_{\kappa,jt}\|_{\infty}^2\|v_t\|_0^2+\|\Psi_{\kappa,j}\|_{\infty}^2\|v_{tt}\|_0^2\right)\,d\tau \notag \\
& \lesssim P(\F_{\kappa}).\label{E:EQUIV1}
\end{align}
We thereby used the trace estimate~\eqref{E:DMKAPPA} to obtain
\[
\|\nabla \Psi_{\kappa,tt}\|_{0}^2 \lesssim |h_{\kappa,tt}|_{0.5}^2 \lesssim \F_\kappa,
\]
where we have used the definition~\eqref{eq:coercive} in the last bound above. On the other hand, using the a priori bound~\eqref{E:APRIORI} and the Sobolev embedding we conclude that
$\|v\|_{\infty}\lesssim \|v\|_{1}\lesssim \|q\|_2\le \E_\kappa(0)+1\lesssim 1.$ The remaining two terms on the right-hand side of~\eqref{E:EQUIV1} are estimated in a similar fashion.
If $b=1$ we apply the same ideas using~\eqref{E:ELLIPTIC1},~\eqref{E:APRIORI}, and the Sobolev embeddings.
To prove $\|q\|_{L^2_tH^{4.5}_x}^2 \lesssim  \F_\kappa$ we need to use an interpolation estimate. The strategy consists of estimating
$\|q\|_{L^2_tH^{5}_x}^2$ and $\|q\|_{L^2_tH^{4}_x}^2$ separately and then interpolating between the two estimates. The reader may consult~\cite{HaSh3} for the details.
\end{proof}


\begin{remark}
The regularity  of $q\in L^2_tH^{4.5}_x$ can in fact be improved.
\end{remark}


\begin{lemma}[Optimal regularity for $\Psi_ \kappa $ and $q$]\label{optimal_q}  Suppose that the pair $(q,h)$ is  a solution of the 
$\kappa $-problem~\eqref{eq:reg} given by Theorem~\ref{kthm}, and 
 that the basic assumption~\eqref{E:APRIORI} holds on $[0,T_\kappa].$ Then 
$
\int_0^{T_ \kappa } \left( \| \Psi_\kappa\|_5^2  +  \| q\|_5^2\right) dt  \le C \mathcal{E}_\kappa (t) \,.
$
\end{lemma} 


\begin{proof} 
{\it Step 1.}  We will first prove that $\int_0^{T_ \kappa }  \| \Psi_\kappa\|_5^2 dt  \le C \mathcal{E} (t) $.  Since $q=0$ on $\Gamma$ it follows that
\begin{equation}\label{ms10}
v(x,t) \cdot \tau_ \kappa(x,t)  =0 \text { on } \Gamma\,,
\end{equation} 
where we recall that $\tau_ \kappa $ is the unit tangent vector to $\Psi_ \kappa (t, \Gamma)$.  Applying the horizontal derivative $\bar \partial$ to \eqref{ms10},
and using the fact that $ \bar \partial \tau_\kappa  = g_\kappa^{-1}  \bar\partial^2\Psi_\kappa \cdot n_ \kappa \ n_ \kappa $ and that 
$\bar\partial^2 \Psi_\kappa \cdot n_ \kappa = -g_ \kappa ^{-1}  \bar \partial^2 h_ \kappa $, 
we find that
\be\label{E:HIMPROVED}
\bar \partial^2  h_ \kappa = \frac{ g_\kappa ^2 \bar \partial v\cdot \tau_ \kappa }{ v \cdot n_\kappa } \,.
\ee
The dominator in~\eqref{E:HIMPROVED} is strictly positive for $T_ \kappa $ small enough by  the Taylor sign condition
\eqref{E:TAYLORKAPPA}.
For any $W:\Omega\to\mathbb R^2$ we define 
\be\label{E:divcurldef}
\curl_{\Psi} W = \varepsilon_{ji}A^s_jW^i,_s
\ee
where $\varepsilon_{21}=-\varepsilon_{12}=1,$ $\varepsilon_{11}=\varepsilon_{22}=0$.
By the tangential trace inequality (see~\cite{ChSh2014}), 
\be\label{E:KEYTRACE}
\big| \bar \partial^4 v\cdot \tau_ \kappa\big|_{H^{-\frac12}(\Gamma)} \lesssim \|\curl_\Psi \t^3v\|_{L^2(\Omega)} + \|\t^4 v\|_{L^2(\Omega)},
\ee
We observe that
\begin{align*}
\curl_\Psi \t^3 v & = \t^3\left(\curl_\Psi v\right)  - \varepsilon_{ji}\sum_{m=1}^3c_m\t^m A^s_j \t^{3-m}v^j,_s \\
& =  - \varepsilon_{ji}\sum_{m=1}^3 c_m\t^m A^s_j \t^{3-m}v^j,_s,
\end{align*}
where we have used the identity $\curl_\Psi v = (\curl \nabla p)\circ \Psi=0.$
Using the Cauchy-Schwarz inequality and the definition of $\E_\kappa$ we obtain
\[
\|\curl_\Psi \t^3 v \|_0 \lesssim \sqrt{\E_\kappa}.
\]
From~\eqref{E:KEYTRACE} and the definition~\eqref{eq:energy} of $\E_\kappa$ we obtain
\be\label{E:HIMPROVED}
 \int_0^{t}\big|\bar \partial^4 v\cdot \tau_ \kappa\big|_{H^{-\frac12}(\Gamma)}^2\,d\sigma \,  \lesssim \,  \E_\kappa(t), \ \ 0\le t\le T_\kappa.
\ee
Using~\eqref{E:HIMPROVED} and~\eqref{E:KEYTRACE} it follows easily that $\int_0^{t}|h_\kappa|_{4.5}^2\,d\sigma \le \E_\kappa(t),$ \ \ $0\le t\le T_\kappa.$ Recalling that
$\Psi_\kappa$ is the harmonic extension of $(x',h_\kappa(x')),$ $x'\in\Gamma$ the optimal trace inequality~\eqref{DM} implies that 
$\int_0^{t}\|\Psi_\kappa\|_5^2\,d\sigma\le C\E_\kappa(t)$ for any $t\in[0, T_\kappa].$
\\
{\it Step 2.}  
The fact that $ \int_0^{T_\kappa} \| q\|_5^2 dt  \le C \mathcal{E} (t)$ follows from Step 1, and the elliptic regularity result in Theorem 3.6 in 
\cite{ChSh2014}.   
\end{proof}



As a consequence of Lemmas~\ref{L:TEMPERATURE} and~\ref{optimal_q} we obtain the following key bound between the norm $\E_\kappa$ and the energy $\F_\kappa$.


\begin{prop}[Norm-energy equivalence]\label{pr:basic}
Let $(q,h)$ be a solution of the $\kappa $-problem~\eqref{eq:reg} given by Theorem~\ref{kthm}.
Assume that the a priori assumption~\eqref{E:APRIORI} holds on $[0,T_\kappa].$ Then 
$
\E_{\kappa}\lesssim \F_\kappa
$
on $[0,T_\kappa]$.
\end{prop}



\begin{proof}
Due to the Taylor sign condition~(\ref{eq:assumptions}),  the boundary integrals  in 
$\E_{\kappa}$and  $\F_{\kappa}$
satisfy
\beas
&&\sum_{b=0}^2|\Lambda_{\kappa}h|_{L^{\infty}_tH^{4-2b}_x}^2+\sum_{b=0}^1|\Lambda_{\kappa}h_t|_{L^2_tH^{3-2b}_x}^2\\
&&{}{}\lesssim\sum_{a+2b\leq4;}|\sqrt{-q,_2}\t^a\partial_t^b\Lambda_{\kappa}h|_{L^{\infty}_tL^2_x}^2
+\sum_{a+2b\leq3}|\sqrt{-q,_2}\t^a\partial_t^b\Lambda_{\kappa}h_{t}|_{L^2_tL^2_x}^2.
\eeas
The remaining estimates now follow directly from Lemmas~\ref{L:TEMPERATURE} and~\ref{optimal_q}.
\end{proof}



%
%
%
\subsection{Proof of Lemma~\ref{lm:i1}}\label{su:proofs}
{\em Proof of part (i) of Lemma~\ref{lm:i1}.}
Applying the tangential differential operator $\t^4$ 
to the equation~(\ref{eq:veq}), multiplying it by $\t^4v^i$ and
integrating over $\Omega$, we obtain
\be\label{eq:enid}
\big(\t^4v^i+\t^4\A^k_iq_{,k}+\A^k_i\t^4q_{,k},\,\t^4v^i\big)_{L^2}=
\sum_{l=1}^3c_l\big(\t^l\A^k_i\t^{4-l}q_{,k},\,\t^4v^i\big)_{L^2},
\ee
where $c_{l}={4\choose l}$.
Recalling~(\ref{eq:comm}), we write
\be\label{eq:iden2}
\t^4\A^k_i=-\A^s_i\t^4\Psi^r_{\kappa,s}\A^k_r+\{\t^4,\A^k_i\},
\ee
where $\{\t^4,\A^k_i\}$ stands for the lower order commutator defined in~(\ref{eq:comm}).
With this identity, we obtain
\be\label{eq:t1}
\begin{array}{l}
\displaystyle
\big(\t^4\A^k_iq_{,k},\t^4v^i\big)_{L^2(\Omega)}
=-\big(\A^s_i\t^4\Psi^r_{\kappa,s}\A^k_rq_{,k},\t^4v^i\big)_{L^2(\Omega)}
+\big(\{\t^4,\A^k_i\}q_{,k},\t^4v^i\big)_{L^2(\Omega)}\\
\displaystyle
=-\int_{\Gamma}q_{,k}\A^s_i\t^4\Psi_{\kappa}^r\A^k_r\t^4v^iN^s
+\int_{\Omega}\A^s_i\t^4\Psi_{\kappa}^r\A^k_rq,_k\t^4v^i_{,s}
+\mathcal{T}\\
\displaystyle
=-\int_{\Gamma}q_{,k}\A^s_i\t^4\Psi_{\kappa}^r\A^k_r\t^4v^iN^s
-\int_{\Omega}\A^s_i\t^4\Psi_{\kappa}^rv^r\t^4v^i_{,s}
+\mathcal{T}.
\end{array}
\ee
where we have used $(\A^s_i),_s=0$ and the identity $v^r=-\A^k_rq_{,k}$ to write the last
line more concisely. Furthermore, integrating by parts with respect to $x^k$
\be\label{eq:est2}
\big(\A^k_i\t^4q_{,k},\,\t^4v^i\big)_{L^2}=\int_{\Omega}\A^k_i\t^4q_{,k}\t^4v^i
=\int_{\Gamma}\A^k_i\t^4q\t^4v^i\cdot N^k-\int_{\Omega}\A^k_i\t^4q\t^4v^i_{,k}.
\ee
Note that the the boundary contribution coming from the fixed boundary $\g\Omega_{\text{top}}$ vanishes 
due to the boundary condition~(\ref{eq:topreg}), which further reduces to $v^2=0$ on $\g\Omega_{\text{top}}$.
Summing~(\ref{eq:t1}) and~(\ref{eq:est2}), we obtain
\be\label{eq:new1}
\begin{array}{l}
\displaystyle
\big(\t^4\A^k_iq_{,k}+\A^k_i\t^4q_{,k},\t^4v^i\big)_{L^2(\Omega)}
=-\int_{\Gamma}q_{,k}\A^s_i\t^4\Psi_{\kappa}^r\A^k_r\t^4v^iN^s
+\int_{\Gamma}\A^k_i\g^4q\g^4v^i\cdot N^k\\
\displaystyle
\quad-\int_{\Omega}\A^k_i\t^4v^i,_k\big(\t^4q+\t^4\Psi_{\kappa}\cdot v\big)
+\mathcal{T}.
\end{array}
\ee
The first three terms on the right-hand side of~(\ref{eq:new1}) will
be the source of positive definite quadratic contributions to the energy. 
To extract the quadratic coercive contribution from the first integral on the right-hand side 
of~(\ref{eq:new1}), we simplify it to 
\beas
-\int_{\Gamma}q_{,k}\A^s_i\t^4\Psi_{\kappa}^r\A^k_r\t^4v^iN^s
&=&\int_{\Gamma}q,_2\A^2_i\t^4\Psi_{\kappa}^r\A^2_r\t^4v^i+\int_{\Gamma}q,_1\A^2_i\t^4\Psi_{\kappa}^{r}\A^1_r\t^4v^i\\
&=&\int_{\Gamma}q,_2\t^4\Psi_{\kappa}\cdot \A^2_{\bullet}\t^4v\cdot \A^2_{\bullet}
+\int_{\Gamma}q,_1\t^4\Psi_{\kappa}^{r}\A^1_r\t^4v\cdot\A^2_{\bullet}.
\eeas
We rewrite the expression $\t^4v\cdot\A^2_{\bullet}$ and thereby use the boundary condition~(\ref{eq:evolreg}):
\beas
\t^4v\cdot \A^2_{\bullet}&=&\t^4w\cdot \A^2_{\bullet}+\t^4(v+w)\cdot\A^2_{\bullet}\\
&=&\t^4w\cdot\A^2_{\bullet}+\t^4\big(\underbrace{(v+w)\cdot\A^2_{\bullet}}_{=0}\big)-\sum_{l=1}^4a_l\t^{4-l}(v+w)\cdot\t^l\A^2_{\bullet}\\
&=&\t^4w\cdot\A^2_{\bullet}-\sum_{l=1}^4a_l\t^{4-l}(v+w)\cdot\t^l\A^2_{\bullet}.
\eeas
Due to the above identity and recalling $\Psi_{\kappa}=\Lambda_{\kappa}\Lambda_{\kappa}\Psi$, we obtain
\be\label{eq:iden4}
\int_{\Gamma}q,_2\t^4\Psi_{\kappa}\cdot n_{\kappa}\t^4v\cdot\A^2_{\kappa}
=\int_{\Gamma}q,_2\t^4\Lambda_{\kappa}\Lambda_{\kappa}\Psi\cdot\A^2_{\bullet}\t^4w\cdot\A^2_{\bullet}
-\sum_{l=1}^4a_l\int_{\Gamma}q,_2\t^4\Psi_{\kappa}\cdot\A^2_{\bullet}\t^{4-l}(v+w)\cdot\t^l\A^2_{\bullet}.
\ee
The first term on the right-hand side of~(\ref{eq:iden4}) is rewritten in the following way
\be\label{eq:iden4.1}
\begin{array}{l}
\displaystyle
\int_{\Gamma}q,_2\t^4\Lambda_{\kappa}\Lambda_{\kappa}\Psi\cdot \A^2_{\bullet}\t^4w\cdot \A^2_{\bullet}
=\int_{\Gamma}q,_2\Lambda_{\kappa}\t^4\Psi\cdot \A^2_{\bullet}
\Lambda_{\kappa}\t^4\Psi_t\cdot \A^2_{\bullet}\\
\displaystyle
\quad+\int_{\Gamma}\t^4\Lambda_{\kappa}\Psi
\Big[\Lambda_{\kappa}\big[q,_2\big(\t^4\Psi_t\cdot \A^2_{\bullet}\big)\A^2_{\bullet}\big]
-q,_2\big(\t^4\Lambda_{\kappa}\Psi_t\cdot \A^2_{\bullet}\big)\A^2_{\bullet}\Big]\\
\displaystyle
=\frac{1}{2}\g_t\int_{\Gamma}q,_2\big|\t^4\Lambda_{\kappa}\Psi\cdot \A^2_{\bullet}\big|^2
-\frac{1}{2}\int_{\Gamma}q_{,2t}\big|\t^4\Lambda_{\kappa}\Psi\cdot \A^2_{\bullet}\big|^2
-\int_{\Gamma}q,_2\t^4\Lambda_{\kappa}\Psi\cdot \A^2_{\bullet}\t^4\Lambda_{\kappa}
\Psi\cdot \A^2_{\bullet t}\\
\displaystyle
\quad+\int_{\Gamma}\Lambda_{\kappa}\t^4\Psi
\Big[\Lambda_{\kappa}\big[q,_2\big(\t^4\Psi_t\cdot \A^2_{\bullet}\big)\A^2_{\bullet}\big]
-q,_2\big(\Lambda_{\kappa}\t^4\Psi_t\cdot \A^2_{\bullet}\big)\A^2_{\bullet}\Big]\\
\displaystyle
=\frac{1}{2}\g_t\int_{\Gamma}q,_2\big|\t^4\Lambda_{\kappa}\Psi\cdot \A^2_{\bullet}\big|^2
+\int_{\Gamma}\Lambda_{\kappa}\t^4\Psi
\Big[\Lambda_{\kappa}\big[q,_2\big(\t^4\Psi_t\cdot \A^2_{\bullet}\big)\A^2_{\bullet}\big]
-q,_2\big(\Lambda_{\kappa}\t^4\Psi_t\cdot \A^2_{\bullet}\big)\A^2_{\bullet}\Big]+\mathcal{T}.
\end{array}
\ee
The second term on the right-hand side of~(\ref{eq:new1}) turns into
\beas
\int_{\Gamma}\A^k_i\t^4q\t^4v^i\cdot N^k
&=&-\int_{\Gamma}\t^4q\t^4v\cdot\A^2_{\bullet}\\
&=&\kappa^2\int_{\Gamma}\t^4h_t\big(\t^4(v\cdot\A^2_{\bullet})
-\sum_{l=0}^3a_l\t^lv\t^{4-l}\A^2_{\bullet}\big)+\kappa^2\int_{\Gamma}\beta(t,x')\t^4v\cdot\A^2_{\bullet}\\
&=&\kappa^2\int_{\Gamma}J_{\kappa}^{-1}|\t^4h_t|^2
-\kappa^2\sum_{l=0}^3a_l\int_{\Gamma}\t^4h_t\t^lv\t^{4-l}\A^2_{\bullet}+\kappa^2\int_{\Gamma}\t^4\beta(t,x')\t^4v\cdot\A^2_{\bullet},
\eeas
where we have used the boundary condition~(\ref{eq:bdryreg}) in the second equality 
above (recall $v\cdot\a^2_{\bullet}=w\cdot\a^2_{\bullet}=h_t$).
As to the third term on the right-hand side of~(\ref{eq:new1}), note that
\[
\A^k_i\t^4v^i,_k=\t^4(\A^k_iv^i,_k)-\sum_{l=1}^4c_l\t^l\A^k_i\t^{4-l}v^i,_k
=-\t^4(q_t+v\cdot \w)-\sum_{l=1}^4c_l\t^l\A^k_i\t^{4-l}v^i,_k,
\]
where $\A^k_iv^i,_k=-\div_{\Psi_{\kappa}}v=-(q_t+v\cdot \w)+\alpha$ by the parabolic equation~(\ref{eq:heatreg}).
Thus
\be\label{eq:new2}
\begin{array}{l}
\displaystyle
-\int_{\Omega}\A^k_i\t^4v^i,_k\big(\t^4q+\t^4\Psi_{\kappa}\cdot v\big)\\
\displaystyle
=\int_{\Omega}\t^4(q_t+\Psi_{\kappa t}\cdot v-\alpha)\big(\t^4q+\t^4\Psi_{\kappa}\cdot v\big)
+\sum_{l=1}^4c_l\int_{\Omega}\t^l\A^k_i\t^{3-l}v^i,_k\big(\t^4q+\t^4\Psi_{\kappa}\cdot v\big)\\
\displaystyle
=\frac{1}{2}\partial_t\int_{\Omega}\big(\t^4q+\t^4\Psi_{\kappa}\cdot v\big)^2
+\int_{\Omega}(\sum_{l=1}^4d_l\t^{4-l}\Psi_{\kappa t}\cdot\t^lv-\t^4\Psi_{\kappa}\cdot v_t)
\big(\t^4q+\t^4\Psi_{\kappa}\cdot v\big)\\
\displaystyle
\quad+\sum_{l=1}^4c_l\int_{\Omega}\t^l\A^k_i\t^{3-l}v^i,_k\big(\t^4q+\t^4\Psi_{\kappa}\cdot v\big) + \int_{\Omega}\t^4\alpha \big(\t^4q+\t^4\Psi_{\kappa}\cdot v\big).
\end{array}
\ee
Combining~(\ref{eq:new1}), ~(\ref{eq:iden4}), ~(\ref{eq:iden4.1}) and~(\ref{eq:new2}) we obtain
the identity~(\ref{eq:enidentity})
with the error terms $\mathcal{R}_1$ and $\mathcal{R}_2$ given by: 
\begin{align}
\mathcal{R}_1:= &
\sum_{l=1}^3c_l\t^{l}\A^k_i\t^{4-l}q_{,k}\t^4v^i
-\big(\sum_{l=1}^4c_l\t^l\A^k_i\t^{4-l}v^i,_k
+\sum_{l=1}^4d_l\t^{4-l}\w\cdot\t^lv-\t^4\Psi_{\kappa}\cdot v_t\big)\big(\t^4q+\t^4\Psi_{\kappa}\cdot v\big) \notag \\
&  +\t^4\alpha \big(\t^4q+\t^4\Psi_{\kappa}\cdot v\big);\label{eq:remainder}
\end{align}
\be
\label{eq:remaindergamma}
\begin{array}{l}
\displaystyle
\mathcal{R}_2:=
-\Lambda_{\kappa}\t^4\Psi
\Big[\Lambda_{\kappa}\big[(-q,_2)\big(\t^4\Psi_t\cdot \A^2_{\bullet}\big)\A^2_{\bullet}\big]
-(-q,_2)\big(\t^4\Lambda_{\kappa}\Psi_t\cdot \A^2_{\bullet}\big)\A^2_{\bullet}\Big]\\
\displaystyle
+\sum_{l=1}^4a_l(-q,_2)\t^4\Psi_{\kappa}\cdot \A^2_{\bullet}\t^{4-l}(v+w)\cdot\t^l \A^2_{\bullet}
+q,_1\t^4\Psi_{\kappa}^{r}\A^1_r\t^4v\cdot\A^2_{\bullet}
+\kappa^2\sum_{l=0}^3a_l\t^4h_t\t^lv\t^{4-l}\A^2_{\bullet}\\
\displaystyle
-\kappa^2\t^4\beta\t^4v\cdot\A^2_{\bullet}.
\end{array}
\ee
Applying the tangential differential operator $\t^2\partial_t$ 
to the equation~(\ref{eq:veq}), multiplying it by $\t^2\partial_tv^i$ and
integrating over $\Omega$, we obtain in a completely analogous fashion
identity~(\ref{eq:en34}) claimed in Lemma~\ref{lm:i1}
with error terms $\mathcal{R}_3$ and $\mathcal{R}_4$ given by:
\be\label{eq:r3}
\begin{array}{l}
\displaystyle
\mathcal{R}_3:=
\sum_{1\leq m+n\leq 2}c_{mn}\t^m\partial_t^n\A^k_i\t^{2-m}\partial_t^{1-n}q,_k\t^2\partial_tv^i
-\big(\sum_{1\leq m+n\leq2}c_{mn}\t^m\partial_t^n\A^k_i\t^{2-m}\partial_t^{1-n}v^i,_k\\
\displaystyle
\qquad+\sum_{0\leq m+n\leq 2}d_{mn}\t^m\partial_t^n\Psi_{\kappa t}\cdot\t^{2-m}\partial_t^{1-n}v-
\t^2\Psi_{\kappa t}\cdot v_t\big)
\times\big(\t^2q_t+\t^2\Psi_{\kappa t}\cdot v\big);
\end{array}
\ee
\be\label{eq:r4}
\begin{array}{l}
\displaystyle
\mathcal{R}_4:=
-\Lambda_{\kappa}\t^2\Psi_t
\Big[\Lambda_{\kappa}\big[q,_2\big(\t^2\g_t\Psi_{t}\cdot \A^2_{\bullet}\big)\A^2_{\bullet}\big]
-q,_2\big(\t^2\g_t\Lambda_{\kappa}\Psi_t\cdot \A^2_{\bullet}\big)\A^2_{\bullet}\Big]\\
\displaystyle
\quad+\sum_{l+l'\geq1}a_{l,l'}\int_{\Gamma}q,_2\t^l\g_t^{l'}\Psi_{\kappa}\cdot \A^2_{\bullet}\t^{2-l}\g_t^{1-l'}(v+w)
\cdot\g_t^l \A^2_{\bullet}\\
\displaystyle
\quad-q,_1\t^2\g_{t}\Psi_{\kappa}^{r}\A^1_r\t^2\g_{t}v\cdot\A^2_{\bullet}
+\kappa^2\sum_{0\leq l+l'<3}\t^2\g_th_t\t^l\g_t^{l'}v\cdot\t^{2-l}\g_t^{1-l'}\A^2_{\bullet}
-\kappa^2\t^2\g_t\beta\t^2\g_tv\cdot\A^2_{\bullet}.
\end{array}
\ee
Finally, 
applying $\partial_{tt}$ to the equation~(\ref{eq:veq}), multiplying it by $\partial_{tt}v^i$ and
integrating over $\Omega$, the last identity~(\ref{eq:en56}) of Lemma~\ref{lm:i1} follows with error 
terms $\mathcal{R}_5$ and $\mathcal{R}_6$ given by
\be\label{eq:r5}
\begin{array}{l}
\displaystyle
\mathcal{R}_5:=
-\big(\A^k_i,_{tt}v^i,_k+2\A^k_i,_tv^i,_{kt}+2v_t\cdot \w_t+v_{tt}\cdot \w-\Psi_{\kappa tt}\cdot v_t\big)
(q_{tt}+\Psi_{\kappa tt}\cdot v);
\end{array}
\ee
\be\label{eq:r6}
\begin{array}{l}
\displaystyle
\mathcal{R}_6:=
-\Lambda_{\kappa}\Psi_{tt}
\Big[\Lambda_{\kappa}\big[q,_2\big(\Psi_{ttt}\cdot \A^2_{\bullet}\big)\A^2_{\bullet}\big]
-q,_2\big(\g_{tt}\Lambda_{\kappa}\Psi_t\cdot \A^2_{\bullet}\big)\A^2_{\bullet}\Big]\\
\displaystyle
\quad+\sum_{l=1}^2a^2_l\int_{\Gamma}q,_2\g_t^2\Psi_{\kappa}\cdot \A^2_{\bullet}\g_t^{2-l}(v+w)\cdot\g_t^l \A^2_{\bullet}
-q,_1\g_{tt}\Psi_{\kappa}^{r}\A^1_r\g_{tt}v\cdot\A^2_{\bullet}
+\kappa^2\sum_{l'=0}^1\g_{tt}h_t\g_t^{l'}v\g_t^{2-l'}\A^2_{\bullet}\\
\displaystyle
\quad-\kappa^2\g_{tt}\beta\g_{tt}v\cdot\A^2_{\bullet}.
\end{array}
\ee
{\em Proof of part (ii) of Lemma~\ref{lm:i1}.}
\par
Applying the tangential operator $\t^3\partial_t$ to the equation~(\ref{eq:veq}), multiplying by $\t^3v^i$
and integrating over $\Omega$ we obtain
\[
\big(\t^3\partial_tv^i,\,\t^3v^i\big)_{L^2}+\big(\t^3\partial_t(\A^k_iq,_k),\,\t^3v^i\big)_{L^2}=0,
\]
implying
\be\label{eq:B1}
\frac{1}{2}\partial_t\int_{\Omega}|\t^3v^i|^2+\big(\t^3\partial_t\A^k_iq,_k+\A^k_i\t^3\partial_tq,_k,\,\t^3v^i\big)_{L^2}
=\sum_{l+\bar{l}=3,\, k+\bar{k}=1\atop 0<l+k<4} c_{l,k,\bar{l},\bar{k}}
\big(\t^l\partial_t^k\A^k_i\t^{\bar{l}}\partial_t^{\bar{k}}q,_k,\,\t^3v^i\big)_{L^2}.
\ee
Recalling~(\ref{eq:comm}), we write
\be\label{eq:B2}
\t^3\partial_t\A^k_i=-\A^k_r\t^3\w,_s^r\A^s_i+\{\t^3\partial_t,\,\A^k_i\}.
\ee
Using this decomposition we have
\be\label{eq:B3}
\big(\t^3\partial_t\A^k_iq,_k+\A^k_i\t^3\partial_tq,_k,\,\t^3v^i\big)_{L^2}
=-\int_{\Omega}\A^k_r\t^3\w,_s^r\A^s_iq,_k\t^3v^i+\int_{\Omega}\A^k_i\t^3\partial_tq,_k\t^3v^i
+\mathcal{T},
\ee
where the commutator term has been absorbed in the error $\mathcal{T}$.
Integrating by parts with respect to $s$ and $k$ in the first two integrals on the right-hand side above respectively, we obtain
analogously to the proof of Lemma~\ref{lm:i1}:
\be\label{eq:B4}
\begin{array}{l}
\displaystyle
-\int_{\Omega}\A^k_r\t^3\w,^r_s\A^s_iq,_k\t^3v^i+\int_{\Omega}\A^k_i\t^3\partial_tq,_k\t^3v^i\\
\displaystyle
=\int_{\Gamma}q,_2\t^3\w\cdot \A^2_{\bullet}\t^3w\cdot\A^2_{\bullet}
-\int_{\Gamma}\t^3\g_tq\t^3v\cdot\A^2_{\bullet}
+\int_{\Gamma}q,_1\A^1_r\t^3\w^r\t^3v\cdot\A^2_{\bullet}\\
\displaystyle
+\int_{\Omega}(\t^3q_t+\t^3\w\cdot v)^2
+\int_{\Omega}\big(\sum_{l=1}^3d_l\t^{3-l}\w\cdot\t^lv+\sum_{l=1}^3e_l\t^lA^s_i\t^{3-l}v^i,_s\big)\big(\t^3q_t+\t^3\w\cdot v\big)
+\mathcal{T}.
\end{array}
\ee
Note further that the first term on the right-hand side above can be, similarly to~(\ref{eq:iden4.1}), further written as
\beas
\int_{\Gamma}q,_2\t^3\w\cdot \A^2_{\bullet}\t^3w\cdot\A^2_{\bullet}
&=&\int_{\Gamma}q,_2|\t^3\Lambda_{\kappa}w\cdot \A^2_{\bullet}|^2
+\int_{\Gamma}\Lambda_{\kappa}\t^3w\cdot\big[\Lambda_{\kappa}[q,_2(\t^3w\cdot \A^2_{\bullet})\A^2_{\bullet}]\\
&&-q,_2(\t^3\Lambda_{\kappa}w\cdot \A^2_{\bullet})\A^2_{\bullet}\big].
\eeas
The second term on the right-hand side of~(\ref{eq:B4}) reads, using the boundary condition~(\ref{eq:bdryreg})
\beas
\int_{\Gamma}\t^3\g_tq\t^3v\cdot\A^2_{\bullet}
&=&\kappa^2\int_{\Gamma}\t^3\g_th_t\big(\t^3(v\cdot\A^2_{\bullet})-\sum_{l=0}^2c_l\t^lv\cdot\t^{3-l}\A^2_{\bullet}\big)
+\kappa^2\int_{\Gamma}\t^3\g_t\beta\t^3v\cdot\A^2_{\bullet}\\
&=&\frac{\kappa^2}{2}\frac{d}{dt}\int_{\Gamma}J_{\kappa}^{-1}|\t^3h_t|^2-\kappa^2\sum_{l=0}^2c_l\int_{\Gamma}\t^3\g_th_t\t^lv\cdot\t^{3-l}\A^2_{\bullet}
+\kappa^2\int_{\Gamma}\t^3\g_t\beta\t^3v\cdot\A^2_{\bullet}+\mathcal{T},
\eeas
where the error term $\mathcal{T}$ denotes the lower order terms containing the time derivative of $J_{\kappa}$.
We also used the regularized boundary condition~(\ref{eq:bdryreg}) in the first equality above.
Combining~(\ref{eq:B1})--(\ref{eq:B4}) and the last identity we obtain the identity~(\ref{eq:s12})
with error terms $\mathcal{S}_1$ and $\mathcal{S}_2$ given by
\begin{align}
\mathcal{S}_1:= &
\sum_{l+\bar{l}=3,\, k+\bar{k}=1\atop 0<l+k<4} c_{l,k,\bar{l},\bar{k}}
\t^l\partial_t^k\A^k_i\t^{\bar{l}}\partial_t^{\bar{k}}q,_k \notag \\
&-\big(\sum_{l=1}^3d_l\t^{3-l}\w\cdot\t^lv+\sum_{l=1}^3e_l\t^l\A^s_i\t^{3-l}v^i,_s\big)\big(\t^3q_t+\t^3\w\cdot v\big); \label{eq:sremainder}
\end{align}
\begin{align}
\mathcal{S}_2:=&
-\Lambda_{\kappa}\t^3w\cdot\big[\Lambda_{\kappa}[q,_2(\t^3w\cdot \A^2_{\bullet})\A^2_{\bullet}]
-q,_2(\t^3\Lambda_{\kappa}w\cdot \A^2_{\bullet})\A^2_{\bullet}\big]\notag \\
&+\sum_{l=1}^3c_l(-q,_2)\t^3\w\cdot \A^2_{\bullet}(\t^{3-l}(v+w)\cdot\t^l\A^2_{\bullet})
-q,_1\A^1_r\t^3\w^r\t^3v\cdot\A^2_{\bullet} \notag \\
&+\kappa^2\sum_{l=0}^2c_l\t^3\g_th_t\t^lv\cdot\t^{3-l}\A^2_{\bullet}
-\kappa^2\t^3\g_t\beta\t^3v\cdot\A^2_{\bullet}.\label{eq:sremaindergamma}
\end{align}
Applying the tangential operator $\t\partial_t^2$ to the equation~(\ref{eq:veq}), multiplying by $\t\partial_tv^i$
and integrating over $\Omega$ we obtain the identity~(\ref{eq:s34}) in an analogous way, with 
error terms $\mathcal{S}_3$ and $\mathcal{S}_4$ given by
\be\label{eq:s3}
\begin{array}{l}
\displaystyle
\mathcal{S}_3:=\big(v,_s\cdot\t\Psi_{\kappa tt}\A^s_i-\{\t\partial_t^2,\A^k_i\}q,_k\big)
\t v^i_t
+\sum_{1\leq m+n\leq2}c_{mn}\t^a\partial_t^b\A^k_i\t^{1-a}\partial_t^{2-b}q,_k\t v^i_{tt}\\
\displaystyle
+\big(\sum_{1\leq m+n\leq2}d_{mn}\t^a\partial_t^b\A^s_i\t^{1-a}\partial_t^{1-b}v^i,_s
-(\Psi_{tt}\cdot\t v+\t\Psi_t\cdot v_t+\Psi_t\t v_t)\big)\big(\t q_{tt}+\t\Psi_{\kappa,tt}\cdot v\big);
\end{array}
\ee
\begin{align}
\mathcal{S}_4:=&q,_2\t \w_t\cdot \A^2_{\bullet}\big[(\t(v+\w)\cdot \A^2_{\bullet t})
+(\w_t+v_t)\cdot\t\A^2_{\bullet}+(\w+v)\cdot\t \A^2_{\bullet t}\big] \notag \\
&-\t\g_t\Lambda_{\kappa}w\cdot\A^2_{\bullet}
\Big[\Lambda_{\kappa}\big((-q,_2)A^2_{\bullet}\t\g_tw\cdot\A^2_{\bullet}\big)
-q,_2\t\g_t\Lambda_{\kappa}w\A^2_{\bullet}\Big]\notag \\
&-q,_1\A^1_r\t\g_t\w^r\t\g_tv\cdot\A^2_{\bullet}+\kappa^2\t\g_th_t(\t\g_t(v\cdot\A^2_{\bullet})-\t\g_tv\cdot\A^2_{\bullet})
-\kappa^2\t\g_{tt}\beta\t\g_tv\cdot\A^2_{\bullet}. \label{eq:s4}
\end{align}

\subsection{Nonlinear energy estimates}\label{se:est1}
The following proposition states the desired energy bound for the classical Stefan problem (with $\sigma=0$), 
will subsequently lead to a uniform-in-$\kappa$ time of existence
 for our family solutions  to the regularized $ \kappa$-problems~(\ref{eq:reg}).
\begin{prop}[Main energy inequality]\label{pr:basic1}
There exists a constant $C$ independent of $\kappa$ and a generic polynomial function $P$  
such that for any $t\in [0,T^{\kappa}]$ we have the following bound:
\be\label{E:POLYNOMIALBOUND}
\E_{\kappa}(t)\leq C\E_{\kappa}(0)+C(t+\sqrt{t})P(\E_{\kappa}).
\ee
\end{prop}
%
%
The proof of the proposition proceeds by systematically estimating error terms in the energy identities 
from Section~\ref{su:identities}. We shall implicitly use the a priori bound~\eqref{E:APRIORI} freely throughout the proof
without explicitly making a reference to it.

\subsubsection*{ Step 1. Estimates for $\int_0^t\int_{\Omega}\mathcal{R}_1$ defined by~\eqref{eq:remainder}} 
%
%

\medskip

\noindent
We start by estimating the integral $\sum_{l=1}^3\int_0^t\int_{\Omega}c_l\t^{l}\A^k_i\t^{4-l}q_{,k}\t^4v^i$ (the first term appearing in~\eqref{eq:remainder}.)
If $l=1$, we have
\beas
\Big|\int_0^t\int_{\Omega}\t \A^k_i\t^3q_{,k}\t^4v^i\Big|
&\leq&\|\t \A^k_i\|_{L^{\infty}_tL^{\infty}_x}\int_0^t\|\t^3q,_k\|_{L^2}\|\t^4v^i\|_{L^2}\\
&\lesssim&\|\int_0^t\t\partial_t(\A^k_i)\|_{H^{1.5}}\|\t^3q,_k\|_{L^2_tL^2_x}\|\t^4v^i\|_{L^2_tL^2_x}\\
&\leq&\sqrt{t}\|\w\|_{L^2_tH^{3.5}_x}\|\t^3q,_k\|_{L^2_tL^2_x}\|\t^4v^i\|_{L^2_tL^2_x}\lesssim \sqrt{t}P(\E_{\kappa}).
\eeas
For $l=2,3$ we have
\beas
\Big|\int_0^t\int_{\Omega}\t^{l}\A^k_i\t^{4-l}q_{,k}\t^4v^i\Big|
&\lesssim&\|\t^l\A^k_i\|_{L^{\infty}_tH^{0.5}_x}\|\t^{4-l}q,_k\|_{H^{0.5}}\int_0^t\|\t^4v^i\|_{L^2}\\
&\lesssim&\sqrt{t}\|\nabla(\Psi_{\kappa}-\text{Id})\|_{L^{\infty}_tH^{2.5}_x}\|q\|_{L^{\infty}_tH^{3.5}_x}\|\t^4v^i\|_{L^2_tL^2_x}
\lesssim \sqrt{t}P(\E_{\kappa}).
\eeas
We proceed to estimate the integral $\sum_{l=1}^4c_l\int_0^t\int_{\Omega}\t^l\A^k_i\t^{4-l}v^i,_k\big(\t^4q+\t^4\Psi_{\kappa}\cdot v\big)$ (the second term appearing in~\eqref{eq:remainder}).
Only cases $l=1$ and $l=4$ deserve special attention, while the cases $l=2$ and $l=3$ are estimated
by a routine application of the Cauchy-Schwarz inequality and the Sobolev embedding.
When $l=1,$ we can use Lemma~\ref{L:TECHNICAL}
to conclude that 
\[
\begin{array}{l}
\displaystyle
\Big|\int_0^t\int_{\Omega}\t \A^k_i\t^3v^i,_k\big(\t^4q+\t^4\Psi_{\kappa}\cdot v\big)\Big|
\leq \int_0^t\|\t\A^k_i\|_{0.5}\|v^i,_k\|_{2.5}\|\t^4q+\t^4\Psi_{\kappa}\cdot v\|_{0}\\
\displaystyle
+\|\t \A^k_i\|_{L^{\infty}_tL^{\infty}_x}\int_0^t\|v^i,_k\|_{2.5}\big(\|\t^4q\|_{0.5}
+\|(\t^4\Psi_{\kappa}\cdot v)\|_{0.5}\big)\\
\displaystyle
\lesssim\|\int_0^t\nabla^2\partial_t(\A^k_i)\|_{L^{\infty}_tL^2_x}\int_0^t\|q\|_{4.5}
\big(\|\t^4q\|_{0.5}+\|\t^4\Psi_{\kappa}\cdot v\|_{0.5}\big)
+\|\int_0^t\t\partial_t(\A^k_i)\|_{H^{1.5}}\int_0^t\|q\|_{4.5}^2\\
\displaystyle
+\|\nabla^2\Psi_{\kappa}\|_{L^{\infty}_tH^{1.5}_x}\|v\|_{L^{\infty}_tH^{0.5}_x}\|\nabla^2\Psi_{\kappa}\|_{L^{\infty}_tH^{2.5}_x}\int_0^t\|q\|_{4.5}\\
\displaystyle
\lesssim\sqrt{t}\|\w\|_{L^2_tH^3_x}\|q\|_{L^2_tH^{4.5}_x}^2+\sqrt{t}\|\w\|_{L^2_tH^3_x}
\|\nabla^2\Psi_{\kappa}\|_{L^{\infty}_tH^{2.5}_x}\int_0^t\|q\|_{4.5}
+C\sqrt{t}\|\w\|_{L^2_tH^{3.5}_x}\int_0^t\|q\|_{4.5}^2\\
\displaystyle
+C\sqrt{t}\|\nabla^2\Psi_{\kappa}\|_{L^{\infty}_tH^{1.5}_x}\|\nabla^2\Psi_{\kappa}\|_{L^{\infty}_tH^{2.5}_x}\|q\|_{L^2_tH^{4.5}_x}
\lesssim \sqrt{t}P(\E_{\kappa}).
\end{array}
\]
As for the case $l=4$, we use Lemma~\ref{L:TECHNICAL} again 
and obtain 
\begin{align*}
& \Big|\int_0^t\int_{\Omega}\t^4\A^k_iv^i,_k\big(\t^4q+\t^4\Psi_{\kappa}\cdot v\big)\Big|
\leq\int_0^t\int_{\Omega}\|\A^k_i\|_{3.5}
\|v^i,_k\big(\t^4q+\t^4\Psi_{\kappa}\cdot v\big)\|_{0.5}\\
& \leq\int_0^t\|\nabla^2\Psi_{\kappa}\|_{2.5}\|v^i,_k\|_{W^{0.5,\infty}}\|\t^4q+\t^4\Psi_{\kappa}\cdot v\|_{0.5} \\
& \leq\|\nabla^2\Psi_{\kappa}\|_{L^{\infty}_tH^{2.5}_x}\|q\|_{L^{\infty}_tH^{3.5}_x}\int_0^t(\|q\|_{4.5}+\|\nabla^2\Psi_{\kappa}\|_{2.5})\\
&\leq\sqrt{t}\|\nabla^2\Psi_{\kappa}\|_{L^{\infty}_tH^{2.5}_x}\|q\|_{L^{\infty}_tH^{3.5}_x}\|q\|_{L^2_tH^{4.5}_x}
+t\|q\|_{L^{\infty}_tH^{3.5}_x}\|\nabla^2\Psi_{\kappa}\|_{L^{\infty}_tH^{2.5}_x}^2
\lesssim  (\sqrt{t}+t)P(\E_{\kappa}).
\end{align*}
%
The next error term to estimate is $\sum_{l=1}^4d_l\int_0^t\int_{\Omega}\t^{4-l}\w\cdot\t^lv\big(\t^4q+\t^4\Psi_{\kappa}\cdot v\big)$ (the third term appearing in~\eqref{eq:remainder}).
If $l=4$, we estimate
\begin{align*}
& \Big|\int_0^t\int_{\Omega}\w\cdot\t^4v\big(\t^4q+\t^4\Psi_{\kappa}\cdot v\big)\Big|
\leq\int_0^t\|\w\|_{\infty}\|\t^4v\|_0\|\t^4q+\t^4\Psi_{\kappa}\cdot v\|_0\\
&\leq\sqrt{t}\|\w\|_{L^{\infty}_tH^{1.5}_x}\|\t^4q+\t^4\Psi_{\kappa}\cdot v\|_{L^{\infty}_tL^2_x}\|\t^4v\|_{L^2_tL^2_x}
\lesssim  \sqrt{t}P(\E_{\kappa});
\end{align*}
and analogously for $l=3$
\[
\Big|\int_0^t\int_{\Omega}\t \w\cdot\t^3v\big(\t^4q+\t^4\Psi_{\kappa}\cdot v\big)\Big|
\leq\sqrt{t}\|\w\|_{L^{\infty}_tH^{2.5}_x}\|\t^4q+\t^4\Psi_{\kappa}\cdot v\|_{L^{\infty}_tL^2_x}\|\t^3v\|_{L^2_tL^2_x}
\lesssim  \sqrt{t}P(\E_{\kappa}).
\]
For $l=1,2$, we have
\beas
\Big|\int_0^t\int_{\Omega}\t^l\w\cdot\t^{4-l}v\big(\t^4q+\t^4\Psi_{\kappa}\cdot v\big)\Big|
&\leq&\int_0^t\|\t^l\w\|_0\|\t^{4-l}v\|_{\infty}\|\t^4q+\t^4\Psi_{\kappa}\cdot v\|_0\\
&\leq&\sqrt{t}\|\w\|_{L^{\infty}_tH^2_x}\|\t^4q+\t^4\Psi_{\kappa}\cdot v\|_{L^{\infty}_tL^2_x}\|q\|_{L^2_tH^{4.5}_x}
\lesssim  \sqrt{t}P(\E_{\kappa}).
\eeas
The next-to-last term on the right-hand side of~(\ref{eq:remainder}) is estimated as follows:
\[
\Big|\t^4\Psi_{\kappa}\cdot v_t\big(\t^4q+\t^4\Psi_{\kappa}\cdot v\big)\Big|
\leq\sqrt{t}\|\t^4\Psi_{\kappa}\|_{L^{\infty}_tL^2_x}\|\t^4q+\t^4\Psi_{\kappa}\cdot v\|_{L^{\infty}_tL^2_x}
\|v_t\|_{L^2_tH^{1.5}_x}\lesssim \sqrt{t}P(\E_{\kappa}).
\]
\noindent
Finally, to bound $\int_0^t\int_\Omega \t^4\alpha \big(\t^4q+\t^4\Psi_{\kappa}\cdot v\big)$ (the last term appearing in~\eqref{eq:remainder})
we Integrate by parts and use the Cauchy-Schwarz inequality to obtain
\begin{align*}
\Big|\int_0^t\int_\Omega \t^4\alpha \big(\t^4q+\t^4\Psi_{\kappa}\cdot v\big)\Big| \le \|\t^3\alpha\|_{L^\infty_tL^2_x} \sqrt t \|\t^5q + \t(\t^4\Psi_\kappa\cdot v)\|_{L^2_tL^2_x} 
\lesssim \E_\kappa(0) + t \E_\kappa(t).
\end{align*}
We used Lemma~\ref{optimal_q} and a priori bound~\eqref{E:APRIORI}.
\subsubsection*{Estimates for the error term $\int_0^t\int_{\Gamma}\mathcal{R}_2$ defined by~\eqref{eq:remaindergamma}.}
%
%
%
For any $i,j\in\{1,2\}$ set
$F=q,_2\A^2_i\A^2_j$, $G=\t^3\Psi^i_t$ and apply Lemma~\ref{lm:comm}
to conclude
\[
\begin{array}{l}
\displaystyle
\int_0^t\Big|\Lambda_{\kappa}\big[q,_2\big(\t^4\Psi_t\cdot \A^2_{\bullet}\big)\A^2_{\bullet}\big]
-q,_2\big(\t^4\Lambda_{\kappa}\Psi_t\cdot \A^2_{\bullet}\big)\A^2_{\bullet}\Big|^2
\lesssim\int_0^t|q,_2\A^2_{\bullet}:\A^2_{\bullet}|_{W^{1,\infty}(\Gamma)}^2|\Lambda_\kappa w|_3^2\\
\displaystyle
\quad\lesssim\sup_{0\leq s\leq t}|q,_2\A^2_{\bullet}:\A^2_{\bullet}|_2^2\int_0^t|\Lambda_\kappa w|_3^2\lesssim P(\E_{\kappa}),
\end{array}
\]
where we  
estimate $|\Lambda_\kappa w|_{L^2_tH^3_x}$ using~(\ref{E:HEVOLUTION}):
\be\label{eq:ht}
|h_t|_{L^2_tH^3_x}^2\lesssim\int_0^t\big|\t^3(\sqrt{1+|\t h_{\kappa}|^2}(v\cdot \A^2_{\bullet}))\big|^2
\lesssim P(\E_{\kappa})\int_0^t\|q\|_{4.5}^2\lesssim P(\E_{\kappa}).
\ee 
Note that we bounded $|v|_{3}$ by relating it to its norm over $\Omega$ via the trace estimate
\be\label{eq:htaux}
|v|_{L^2_tH^3_x}^2\lesssim\|v\|_{L^2_tH^{3.5}_x}^2\lesssim P(\E_{\kappa})\int_0^t\|q\|_{4.5}^2\lesssim P(\E_{\kappa}).
\ee
Thus,
\[
\begin{array}{l}
\displaystyle
\Big|\int_0^t\int_{\Gamma}\Lambda_{\kappa}\t^4\Psi
\Big[\Lambda_{\kappa}\big[q,_2\big(\t^4\Psi_t\cdot \A^2_{\bullet}\big)\A^2_{\bullet}\big]
-q,_2\big(\t^4\Lambda_{\kappa}\Psi_t\cdot \A^2_{\bullet}\big)\A^2_{\bullet}\Big]\Big|\\
\displaystyle
\qquad
\lesssim P(\E_{\kappa})^{1/2}\big(\int_0^t\int_{\Gamma}\big|\Lambda_{\kappa}\t^4\Psi\big|^2\big)^{1/2}
\lesssim tP(\E_{\kappa}).
\end{array}
\]
Finally, we treat the last term on the right-hand side of~(\ref{eq:remaindergamma}).
For $1\leq l\leq2$, we have
\[
\begin{array}{l}
\displaystyle 
\int_0^t\int_{\Gamma}q,_2\t^4\Psi_{\kappa}\cdot \A^2_{\bullet}\t^{4-l}(v+\w)\cdot\t^l \A^2_{\bullet}
\leq|q,_2\t^l \A^2_{\bullet}|_{L^{\infty}_tL^{\infty}_x}\int_0^t|\t^4\Psi_{\kappa}\cdot \A^2_{\bullet}|_0|\t^{4-l}(v+\w)|_0\\
\displaystyle
\lesssim|q,_2|_{L^{\infty}_tH^1_x}|\t^l \A^2_{\bullet}|_{L^{\infty}_tH^1_x}
|\t^4\Psi_{\kappa}\cdot \A^2_{\bullet}|_{L^{\infty}_tL^2_x}\sqrt{t}\big(|v|_{L^2_tH^3_x}+|\w|_{L^2_tH^3_x}\big)
\lesssim \sqrt{t}P(\E_{\kappa}),
\end{array}
\]
where estimates~(\ref{eq:ht}) and~(\ref{eq:htaux}) were used in the last inequality.
If $l=3$, we apply a similar estimate, bounding the term $\t^l\A^2_{\bullet}=\t^3\A^2_{\bullet}$ in $L^2$-norm and 
$\t^{4-l}(v+\w)=\t(v+\w)$ via 
$L^{\infty}$ norm and Sobolev embedding leading to:
\[
\Big|\int_0^t\int_{\Gamma}q,_2\t^4\Psi_{\kappa}\cdot \A^2_{\bullet}\t(v+\w)\cdot\t^3 \A^2_{\bullet}\Big|
\lesssim \sqrt{t}P(\E_{\kappa})
\]
Case $l=4$ is the trickiest error term as four derivatives fall on $\A^2_{\bullet}$, thus creating a term 
that at highest order contains five derivatives of $\Psi$, which is more than the number of derivatives allowed by our energy $\E_\kappa.$ 
However, we have the following identity:
%
\be\label{eq:id}
\begin{array}{l}
\displaystyle
\int_{\Gamma}q,_2\t^4\Psi_{\kappa}\cdot \A^2_{\bullet}(v+\w)\cdot\t^4\A^2_{\bullet}
=\frac{1}{2}\int_{\Gamma}\t[q,_2\frac{\w\cdot\tau_{\kappa}}{|\t\tau_{\kappa}|}]
\big(\t^4\Psi_{\kappa}\cdot \A^2_{\bullet}\big)^2\\
\displaystyle
\qquad
+\int_{\Gamma}q,_2\frac{\w\cdot\tau_{\kappa}}{|\t\tau_{\kappa}|}
\t^4\Psi_{\kappa}\cdot \A^2_{\bullet}\t^4\Psi_{\kappa}\cdot \t \A^2_{\bullet}
+\int_{\Gamma}q,_2\t^4\Psi_{\kappa}\cdot \A^2_{\bullet}E,
\end{array}
\ee
where $E$ is the lower order error term given by 
\be\label{eq:E}
E=\w\cdot\tau_{\kappa}\sum_{l=1}^3e_l\t^{l+1}\Psi_{\kappa}\t^{4-l}(|\t\Psi_{\kappa}|^{-1})\cdot \A^2_{\bullet}
+\sum_{l=1}^3e_l\w\cdot\tau_{\kappa}\t^l\tau_{\kappa}\t^{4-l}\A^2_{\bullet}.
\ee
%
%
To prove~\eqref{eq:id} we first note that
\[
v+\w=(v+\w)\cdot n_{\kappa}n_{\kappa}+(v+\w)\cdot\tau_{\kappa}\tau_{\kappa}
=(v+\w)\cdot\tau_{\kappa}\tau_{\kappa}
\]
where we have used the boundary condition~(\ref{eq:evolreg}).
Therefore, we have the equality
\beas
(v+\w)\cdot\t^4\A^2_{\bullet}&=&(v+\w)\cdot\tau_{\kappa}\tau_{\kappa}\cdot\t^4\A^2_{\bullet}
=\w\cdot\tau_{\kappa}\tau_{\kappa}\cdot\t^4\A^2_{\bullet}\\
&=&-\w\cdot\tau_{\kappa}\t^4\tau_{\kappa}\cdot \A^2_{\bullet}
+\sum_{l=1}^3e_l\w\cdot\tau_{\kappa}\t^l\tau_{\kappa}\t^{4-l}\A^2_{\bullet},
\eeas
where we first used the identity $v\cdot\tau_{\kappa}=0$ and in the last line
we used the product rule expansion of the the identity $0=\t^4(\tau_{\kappa}\cdot \A^2_{\bullet})$ with $e_l$ the corresponding binomial coefficients.
Since $\tau_{\kappa}=\frac{\t\Psi_{\kappa}}{|\t\Psi_{\kappa}|}$, we have
\beas
\t^4\tau_{\kappa}\cdot \A^2_{\bullet}&=&\frac{\t^5\Psi_{\kappa}}{|\t\Psi_{\kappa}|}\cdot \A^2_{\bullet}
+\sum_{l=1}^3e_l\t^{l+1}\Psi_{\kappa}\t^{4-l}(|\t \Psi_{\kappa}|^{-1})\cdot \A^2_{\bullet}+\t\Psi_{\kappa}\t^4
(|\t\Psi_{\kappa}|^{-1})\cdot \A^2_{\bullet}\\
&=&\frac{\t^5\Psi_{\kappa}}{|\t\Psi_{\kappa}|}\cdot \A^2_{\bullet}
+\sum_{l=1}^3e_l\t^{l+1}\Psi_{\kappa}\t^{4-l}(|\t\Psi_{\kappa}|^{-1})\cdot \A^2_{\bullet},
\eeas
where we simply used the product rule to expand $\t^4(\frac{\t\Psi}{|\t\Psi|})$ and the orthogonality
of $\t\Psi_{\kappa}$ and $\A^2_{\bullet}$ in the last line.
Combining the previous two identities, we may write
\[
(v+\w)\cdot\t^4\A^2_{\bullet}=-\frac{\t^5\Psi_{\kappa}}{|\t\Psi_{\kappa}|}\cdot \A^2_{\bullet}\w\cdot\tau_{\kappa}+E,
\]
where the error term $E$ is given by~\eqref{eq:E}.
We thus obtain
\[
\int_{\Gamma}q,_2\t^4\Psi_{\kappa}\cdot \A^2_{\bullet}(v+\w)\cdot\t^4\A^2_{\bullet}
=-\int_{\Gamma}q,_2\frac{\w\cdot\tau_{\kappa}}{|\t\Psi_{\kappa}|}
\t^4\Psi_{\kappa}\cdot \A^2_{\bullet}\t^5\Psi_{\kappa}\cdot \A^2_{\bullet}
+\int_{\Gamma}q,_2\t^4\Psi_{\kappa}\cdot \A^2_{\bullet}\,E.
\]
Note that first integral on the right-hand side has a symmetry allowing us to extract a full tangential derivative
at the level of highest order terms: 
\[
\begin{array}{l}
\displaystyle
-\int_{\Gamma}q,_2\frac{\w\cdot\tau_{\kappa}}{|\t\tau_{\kappa}|}
\t^4\Psi_{\kappa}\cdot \A^2_{\bullet}\t^5\Psi_{\kappa}\cdot \A^2_{\bullet}\\
\displaystyle
\quad=-\frac{1}{2}\int_{\Gamma}q,_2\frac{\w\cdot\tau_{\kappa}}{|\t\tau_{\kappa}|}
\t[\big(\t^4\Psi_{\kappa}\cdot \A^2_{\bullet}\big)^2]
+\int_{\Gamma}q,_2\frac{\w\cdot\tau_{\kappa}}{|\t\tau_{\kappa}|}
\t^4\Psi_{\kappa}\cdot \A^2_{\bullet}\t^4\Psi_{\kappa}\cdot \t \A^2_{\bullet}\\
\displaystyle
\quad=\frac{1}{2}\int_{\Gamma}\t[q,_2\frac{\w\cdot\tau_{\kappa}}{|\t\tau_{\kappa}|}]
\big(\t^4\Psi_{\kappa}\cdot \A^2_{\bullet}\big)^2
+\int_{\Gamma}q,_2\frac{\w\cdot\tau_{\kappa}}{|\t\tau_{\kappa}|}
\t^4\Psi_{\kappa}\cdot \A^2_{\bullet}\t^4\Psi_{\kappa}\cdot \t \A^2_{\bullet},
\end{array}
\]
where we have used integration by parts in the second equation. Finally, summing the previous two identities
we arrive at~(\ref{eq:id}). 

Note that $\Psi_\kappa$ enters the right-hand side of the above identity at most with $4$ derivatives.
By standard $L^{\infty}-L^2-L^2$ type estimates and identity~\eqref{eq:id}, we finally arrive at 
\be
\Big|\int_0^t\int_{\Gamma}q,_2\t^4\Psi_{\kappa}\cdot \A^2_{\bullet}(v+\w)\cdot\t^4\A^2_{\bullet}\Big|
\lesssim\sqrt{t}P(\E_{\kappa}).
\ee
Before we estimate the third term on the right-hand side of~(\ref{eq:remaindergamma}), we first rewrite:
\[
\t^4v\cdot\A^2_{\bullet}=-\t^4(\w\cdot\A^2_{\bullet})-\sum_{l=0}^3a_l\t^lv\t^{4-l}\A^2_{\bullet}
=-J_\kappa^{-1}\t^4h_{\kappa,t}-\sum_{l=0}^3a_l\t^lv\t^{4-l}\A^2_{\bullet},
\]
where $a_l,$ $l=0,\dots,3$ are the corresponding binomial coefficients.
As a consequence, we have
\be\label{eq:kappa1}
\begin{array}{l}
\displaystyle
\Big|\int_0^t\int_{\Gamma}q,_1\t^4\Psi_{\kappa}^{r}\A^1_r\t^4v\cdot\A^2_{\bullet}\Big|
\leq\kappa^2\Big|\int_0^t\int_{\Gamma}(v\cdot\a^2_{\bullet}+\beta),_1\t^4\Psi_{\kappa}^{r}\A^1_rJ_{\kappa}^{-1}\t^4h_t\Big|\\
\displaystyle
\quad+\kappa^2\sum_{l=0}^3a_l\Big|\int_0^t\int_{\Gamma}(v\cdot\a^2_{\bullet}+\beta),_1\t^4\Psi_{\kappa}^{r}\A^1_r\t^lv\t^{4-l}\A^2_{\bullet}\Big|.
\end{array}
\ee
The first term on the right-hand side above is easily bounded as follows:
\beas
\kappa^2\Big|\int_0^t\int_{\Gamma}(v\cdot\a^2_{\bullet}+\beta),_1\t^4\Psi_{\kappa}^{r}\A^1_rJ_{\kappa}^{-1}\t^4h_t\Big|
&\leq&\sqrt{t}\kappa|(v\cdot\a^2_{\bullet}+\beta),_1\A^1_rJ_{\kappa}^{-1}|_{L^{\infty}_tL^{\infty}_x}|\t^4\Psi_{\kappa}|_{L^{\infty}_tL^2_x}
\kappa|\t^4h_t|_{L^2_tL^2_x}\\
&\lesssim&\sqrt{t}\kappa P(\E_{\kappa}).
\eeas
The second term on the right-hand side of~(\ref{eq:kappa1}) is a sum, and the hardest summand to bound is created when $l=0$.
In this case, roughly speaking we bound $|\t^4\A^2_{\bullet}|_0$ by $\kappa^{-1}|\Lambda_{\kappa}\Psi|_4$ trading one tangential derivative on 
$\t^4\Lambda_{\kappa}\Lambda_{\kappa}\nabla\Psi$ for a bound on $\Lambda_{\kappa}\nabla\Psi$ in $H^3$, at the expense of a factor of $\kappa^{-1}$.
Using this observation we obtain 
\begin{align*}
\kappa^2\Big|\int_0^t\int_{\Gamma}(v\cdot\a^2_{\bullet}+\beta),_1\t^4\Psi_{\kappa}^{r}\A^1_rv\t^4\A^2_{\bullet}\Big|
& \lesssim\kappa^2\sqrt{t}|(v\cdot\A^2_{\bullet}+\beta),_1\A^1_r|_{L^{\infty}_tL^{\infty}_x}|\kappa^{-1}|\t^4\Lambda_{\kappa}|_{L^{\infty}_tL^2_x} \\
& \lesssim\sqrt{t}\kappa P(\E_{\kappa}).
\end{align*}
The next-to-last term on the right-hand side of~(\ref{eq:remaindergamma}) is again a sum and the hardest
term to estimate is created again when $l=0$. We use the same idea as in the previous estimate to obtain
\[
\Big|\kappa^2\int_0^t\int_{\Gamma}\t^4h_tv\cdot\t^4\A^2_{\bullet}\Big|
\lesssim\sqrt{t}\kappa|\t^4h_t|_{L^2_tL^2_x}|v|_{L^{\infty}_tL^{\infty}_x}\kappa\kappa^{-1}|\Lambda_{\kappa}\Psi|_{L^{\infty}_tL^2_x}
\lesssim\sqrt{t}P(\E_{\kappa}).
\]
Note that we exploited the presence of the $\kappa$-dependent energy term
in our energy $\E_{\kappa}$, using the bound $\kappa|\t^4h_t|_{L^2_tL^2_x}\leq\sqrt{\E_{\kappa}}$.
In analogous manner, we conclude
\[
\Big|\int_{\Omega}\mathcal{R}_3+\mathcal{R}_5\,dx\Big|+\Big|\int_{\Gamma}\mathcal{R}_4+\mathcal{R}_6\,dx'\Big|
\lesssim(t+\sqrt{t})P(\E_{\kappa}),
\]
where we note that the commutator term, i.e. the first term on the right-hand side of~(\ref{eq:r6}) deserves special attention.
Due to the absence of spatial derivatives in the term $\Psi_{ttt}$ in 
\[
-\Lambda_{\kappa}\Psi_{tt}
\Big[\Lambda_{\kappa}\big[q,_2\big(\Psi_{ttt}\cdot \A^2_{\bullet}\big)\A^2_{\bullet}\big]
-q,_2\big(\g_{tt}\Lambda_{\kappa}\Psi_t\cdot \A^2_{\bullet}\big)\A^2_{\bullet}\Big]
\]
we cannot apply the commutator bound from Lemma~\ref{lm:comm} in the form stated.
Here we crucially exploit the $\kappa$-dependent term in the energy $\E_{\kappa}$. 
Note that
\be\label{eq:whyreg}
\begin{array}{l}
\displaystyle
\Big|\int_0^t\int_{\Gamma}\Lambda_{\kappa}\Psi_{tt}
\Big[\Lambda_{\kappa}\big[q,_2\big(\Psi_{ttt}\cdot \A^2_{\bullet}\big)\A^2_{\bullet}\big]
-q,_2\big(\g_{tt}\Lambda_{\kappa}\Psi_t\cdot \A^2_{\bullet}\big)\A^2_{\bullet}\Big]\Big|\\
\displaystyle
\quad\leq\sqrt{t}|\Lambda_{\kappa}\Psi_{tt}|_{L^{\infty}_tL^2_x}||q,_2\A^2_{\bullet}:\A^2_{\bullet}|_{L^{\infty}_tW^{1,\infty}_x}
\kappa|\Psi_{ttt}|_{L^2_tL^2_x}\\
\displaystyle
\quad
\leq\sqrt{t}P(\E_{\kappa}),
\end{array}
\ee  
where we gain one power of $\kappa$ in the second line above from the commutator estimate and
then absorb it into the energy contribution $\kappa|\Psi_{ttt}|_{L^2_tL^2_x}$.
The last term on the right-hand side of~(\ref{eq:remaindergamma}) contains the $\beta$-contribution from the 
regularized Dirichlet condition~(\ref{eq:bdryreg}). It is easily estimated using the Cauchy-Schwarz inequality
by a term of the form $Ctm_0+Ct\kappa^2|\t^4h_t|_{L^2L^2}^2$ which in turn is smaller than a constant multiple of
$tm_0+t\E_{\kappa}$. Here $m_0$ is a constant, which depends only on the initial data.

\subsubsection*{Estimates for $\int_{\Omega}\mathcal{S}_1$ and $\int_{\Gamma}\mathcal{S}_2$.}
In the first term on the right-hand side of~(\ref{eq:sremainder}), the hardest terms to estimate correspond to the cases
$(\bar{l},\bar{k})=(2,1)$ and $(l,k)=(2,1)$. If $(\bar{l},\bar{k})=(2,1)$, then
\[
\begin{array}{l}
\displaystyle
\Big|\int_0^t\int_{\Omega}\t \A^k_i\t^2\partial_tq,_k\t^3v^i\Big|
\leq\int_0^t\int_{\Omega}|\t(\t \A^k_i\t^3v^i)\t\partial_tq,_k|\\
\displaystyle
\leq\int_0^t\|\t^2\A^k_i\|_{\infty}\|\t^3v^i\|_0\|\t\partial_tq,_k\|_0
+\int_0^t\|\t\partial_t \A^k_i\|_{\infty}\|\t^4v^i\|_0\|\t\partial_tq,_k\|_0\\
\displaystyle
\lesssim\sqrt{t}\|\t^2\A^k_i\|_{L^{\infty}_tH^{1.5}_x}\|\t^3v\|_{L^{\infty}_tL^2_x}\|q_t\|_{L^2_tH^2_x}
+\|\int_0^t\t \A^k_i(s)\,ds\|_{H^{1.5}}\|\t^4v^i\|_{L^2_tL^2_x}\|q_t\|_{L^2_tH^2_x}\\
\displaystyle
\lesssim\sqrt{t}\|\nabla^2\Psi_\kappa\|_{L^{\infty}_tH^{2.5}_x}\|\t^3v\|_{L^{\infty}_tL^2_x}\|q_t\|_{L^2_tH^2_x}
+t\|\nabla w\|_{L^2_tH^1_x}\|\t^4v^i\|_{L^2_tL^2_x}\|q_t\|_{L^2_tH^2_x}\\
\displaystyle
\lesssim (t+\sqrt{t})P(\E_{\kappa}).
\end{array}
\]
Assume now $(l,k)=(2,1)$, then
\[
\begin{array}{l}
\displaystyle
\Big|\int_0^t\int_{\Omega}\t^2\partial_t\A^k_i\t q,_k\t^3v^i\Big|
\lesssim\sqrt{t}\|q\|_{L^{\infty}_tH^2_x}\|\t^3v\|_{L^{\infty}_tL^2_x}\|\w\|_{L^2_tH^3_x}\lesssim\sqrt{t}P(\E_{\kappa}).
\end{array}
\]
The second error term is rather 
straightforward: for any $l=2,3$
\[
\begin{array}{l}
\displaystyle
\Big|\int_0^t\int_{\Omega}\t^{3-l}\w\cdot\t^lv\big(\t^3q_t+\t^3\w\cdot v\big)\Big|
\leq\|\t^{3-l}\w\|_{L^{\infty}_tH^{1.5}_x}\int_0^t\|\t^lv\|_0\|\t^3q_t+\t^3\w\cdot v\|_0\\
\displaystyle
\lesssim\sqrt{t}\|\w\|_{L^{\infty}_tH^{2.5}_x}\|\t^lv\|_{L^{\infty}_tL^2_x}\|\t^3q_t+\t^3\w\cdot v\|_{L^2_tL^2_x}
\lesssim\sqrt{t}P(\E_{\kappa}).
\end{array}
\]
If $l=1$, then
\[
\Big|\int_0^t\int_{\Omega}\t^2\w\cdot\t v\big(\t^3q_t+\t^3\w\cdot v\big)\Big|
\lesssim\|Dv\|_{L^{\infty}_tH^{1.5}_x}\|\w\|_{L^{\infty}_tH^2_x}\|\t^3q_t+\t^3\w\cdot v\|_{L^2_tL^2_x}
\lesssim\sqrt{t}P(\E_{\kappa}).
\]
Similar analysis yields:
\[
\sum_{l=1}^3\Big|\int_0^t\int_{\Omega}\t^l\A^s_i\t^{3-l}v^i,_s\big(\t^3q_t+\t^3\w\cdot v\big)\Big|\lesssim\sqrt{t}P(\E_{\kappa}).
\]
As for the error term~(\ref{eq:sremaindergamma}), we start by applying Lemma~\ref{lm:comm} 
to deal with the commutator term. For any $i,j\in\{1,2\}$ set $F=q,_2\A^2_{i}\A^2_{j}$, $G=\t^2w$ and
apply Lemma~\ref{lm:comm} to obtain
\be\label{eq:comm1}
\begin{array}{l}
\displaystyle
\int_0^t\Big|\Lambda_{\kappa}[q,_2(\t^3w\cdot \A^2_{\bullet})\A^2_{\bullet}]
-q,_2(\t^3\Lambda_{\kappa}w\cdot \A^2_{\bullet})\A^2_{\bullet}\big]\Big|^2
\lesssim\int_0^t|q,_2\A^2_{\bullet}:\A^2_{\bullet}|^2_{W^{1,\infty}}|w|_2^2\\
\displaystyle
\lesssim t\sup_{0\leq s\leq t}|q,_2\A^2_{\bullet}:\A^2_{\bullet}|_2^2\sup_{0\leq s\leq t}|w|_2^2
\lesssim tP(\E_{\kappa}),
\end{array}
\ee
where, in order to bound $|w|_{L^{\infty}_tH^2_x}$, we use the equation~(\ref{eq:evolreg}) 
analogously to the bound~(\ref{eq:ht}).
Upon using the Cauchy-Schwarz inequality we get
\[
\Big|\int_0^t\int_{\Gamma}\Lambda_{\kappa}\t^3w\cdot\big[\Lambda_{\kappa}[q,_2(\t^3w\cdot \A^2_{\bullet})\A^2_{\bullet}]
-q,_2(\t^3\Lambda_{\kappa}w\cdot \A^2_{\bullet})\A^2_{\bullet}\big]\Big|
\leq\sqrt{t}P(\E_{\kappa}).
\]
As for the second term on the right-hand side of~(\ref{eq:sremaindergamma}) we obtain
\[
\begin{array}{l}
\displaystyle
\Big|\int_0^t\int_{\Gamma}q,_2(\t^{3-l}(v+w)\cdot\t^l\A^2_{\bullet})\t^3\w\cdot \A^2_{\bullet}\Big|
\leq\int_0^t|q,_2|_{\infty}\big((|v|_2+|w|_2)|\t^3\A^2_{\bullet}|_0\big)|\t^3\Lambda_{\kappa}w|_0\\
\displaystyle
\quad\lesssim\|q\|_{L^{\infty}_tH^{2.5}_x}(\|v\|_{L^{\infty}_tH^{2.5}_x}+|w|_2)
\left|\t\Psi-\text{Id}\right|_3\int_0^t|\t^3\Lambda_{\kappa}w|_0,
\lesssim\sqrt{t}P(\E).
\end{array}
\]
where, the term $|w|_{L^{\infty}_tH^2_x}^2$ is bounded by $P(\E_{\kappa})$ for the same reason as in~(\ref{eq:comm1}).
The last term on the right-hand side of~(\ref{eq:sremaindergamma}) is a sum, and the hardest term to bound is created when $l=0$. 
We must integrate by parts with respect to the time variable, to obtain
\be\label{eq:kappa2}
\kappa^2\int_0^t\int_{\Gamma}\t^3\g_th_tv\cdot\t^3\A^2_{\bullet}
=\kappa^2\int_{\Gamma}\t^3h_tv\cdot\t^3\A^2_{\bullet}\Big|^t_0-\kappa^2\int_0^t\int_{\Gamma}\t^3h_tv_t\cdot\t^3\A^2_{\bullet}
-\kappa^2\int_0^t\int_{\Gamma}\t^3h_tv\cdot\t^3\g_t\A^2_{\bullet}.
\ee
Now observe that 
\beas
\kappa^2\int_{\Gamma}\t^3h_tv\cdot\t^3\A^2_{\bullet}\Big|^t_0
&\lesssim& \kappa^2m_0+\kappa|\t^3h_t|_{L^{\infty}_tL^2_x}|v|_{L^{\infty}_tL^{\infty}_x}\kappa|\int_0^t\g_t\t^3\A^2_{\bullet}|_{L^{\infty}_tL^2_x}\\
&\lesssim&\sqrt{\E_{\kappa}}\sqrt{\E_{\kappa}}\sqrt{t}\kappa|\t^4h_t|_{L^2_tL^2_x}
\lesssim\sqrt{t}P(\E_{\kappa}),
\eeas
where $m_0$ depends only on the initial conditions. As for the remaining three terms on the right-hand side of~(\ref{eq:kappa2}),
they are straightforward to bound using the standard energy estimates.
We arrive at
\[
\kappa^2\Big|\int_0^t\int_{\Gamma}\t^3\g_th_tv\cdot\t^3\A^2_{\bullet}\Big|
\leq\kappa^2 m_0+\sqrt{t}(1+\kappa)P(\E_{\kappa}).
\]
In analogous manner we conclude
\[
\Big|\int_0^t\int_{\Omega}\mathcal{S}_3\,dx\Big|+\Big|\int_0^t\int_{\Gamma}\mathcal{S}_4\,dx'\Big|\lesssim(t+\sqrt{t})P(\E_{\kappa}).
\]
%
%

\subsection{Proof of Theorem~\ref{th:main}}\label{se:proof1}
The polynomial inequality~\eqref{E:POLYNOMIALBOUND} replaces the typically used Gronwall inequality.
Since the constants appearing in~\eqref{E:POLYNOMIALBOUND} are independent of $\kappa$ a standard continuity argument
(see for instance Section 9 of \cite{CoSh06}) yields the existence of a  $\kappa$-independent time $T$ such
that
\[
\E_{\kappa}(t)\leq C\E_{\kappa}(0)\leq C\E(0)+1
\]
for $\kappa$ small enough.

Since $\E(t)\leq\E_{\kappa}(t),$ $t\in[0,T]$ (recall the definitions~(\ref{eq:ALEenergy}) of $\E$ and~(\ref{eq:energy}) of $\E_{\kappa}$),
we obtain the uniform bound  
\[
\E(q^{\kappa},h^{\kappa})\leq C\E(0)+1,
\]
where $(q^{\kappa},h^{\kappa})_{\kappa}$ is a family of solutions to the $\kappa$-regularized 
problem~(\ref{eq:reg}), $0\leq\kappa\leq1$.
Note that the assumptions~(\ref{eq:assumptions}) remain valid (on a possibly smaller) time interval $[0,T]$, as both
$|\t h|_{L_t^{\infty}L^{\infty}_x}$ and $\delta$ are easily controlled by the energy $\E$.
By the fundamental theorem of calculus, it is clear that on a possibly smaller time interval $[0,T]$ we have 
\[
\sup_{0\le t \le T}\mathcal A(t) \le \E_\kappa(0) + T \sup_{0\le t \le T}\E_\kappa(t) \le  \E_\kappa(0) + \frac12,
\]
thus justifying a posteriori the a priori assumption~\eqref{E:APRIORI}.
Thus, passing to the weak limit as $\kappa\to0$ we obtain a solution on the time interval $[0,T]$ which belongs to the space 
$\mathcal{S}(T)$ defined in~(\ref{eq:solspace}). 
Since 
$\mathcal{S}(T)$ embeds compactly into $C^1_tC^0_x\cap C^0_tC^2_x$ the solution is also classical.

\medskip
\noindent
{\bf Uniqueness.} We only present a brief sketch of the uniqueness argument. 
A simple application of the energy method also implies uniqueness of the solution. Assume that $(\tilde{q},\tilde{h})$
also solves~(\ref{eq:reg}) with the corresponding $\tilde{\Psi},\tilde{v},\tilde{w}$. Then the pair ($r,\rho):=(q-\tilde{q}, h-\tilde{h})$
satisfies the following system of equations:
\begin{subequations}
\label{eq:regu}
\begin{alignat}{2}
r_t-A_i^j\big(A^k_ir,_k\big),_j&=(\Delta_{\Psi}-\Delta_{\tilde{\Psi}})q-(v-\tilde{v})\cdot w+\tilde{v}(w-\tilde{w}) && \ \text{ in } \ \Omega \,; \label{eq:heatu} \\
(v-\tilde{v})^i+A^k_ir,_k+\tilde{q},_k(A^k_i-\tilde{A}^k_i)&=0&& \ \text{ in } \ \Omega \,;\label{eq:vequ}\\
r&=0&& \ \text{ on } \ \Gamma\,;\label{eq:bdryregu}\\
\rho_t&=-r,_2&& \ \text{ on } \ \Gamma\,;\label{eq:evolregu}\\
\g_nr&=0&& \ \text{ on } \ \g\Omega_{\text{top}}.\,
\end{alignat}
\end{subequations}
Furthermore, initially $(r(0,x),\rho(0,x'))=(0,0)$. Applying
$\t$ to the identity~(\ref{eq:vequ}), multiplying by
$(\t(v-\tilde{v}))^i$ and integrating over $\Omega$, we derive the first identity
in analogy to the proof of Lemma~\ref{lm:i1}.
Similarly, applying $\g_t$ to~(\ref{eq:vequ}), multiplying by $(v-\tilde{v})^i$
and integrating, we obtain the second energy identity.
The natural quadratic form that emerges is equivalent to
\[
E:=\|\t(v-\tilde{v})\|_{L^2_tL^2_x}^2+\|v-\tilde{v}\|_{L^{\infty}L^2}^2+
\|r\|_{L^{\infty}H^1_x}^2+\|r_t\|_{L^2_tL^2_x}^2+|\rho|_{L^{\infty}_tL^2_x}^2+|\rho_t|_{L^2_tL^2_x}^2.
\] 
Furthermore, we have an a-priori control of the high-order derivatives of the two solutions, i.e. for 
some $M>0$: $\E(q,h)+\E(\tilde{q},\tilde{h})<M$. From here, we can easily prove the polynomial bound
\[
E(t)\leq tP(E(t)),
\]
which in particular, uses the fact that the initial values for $\rho$ and $r$ are $0$.
We infer that $E=0$ and hence the uniqueness follows.

\medskip
\noindent
{\bf Continuity in time.} 
Since $q\in L^2_tH^5_x$ and $q_t \in L^2_tH^3_x$, it follows that $q\in C^0_tH^4_x$; similarly, since $q_{tt} \in L^2_tH^3_x$, then
 $q_t\in C^0_tH^2_x$.   
Passing to the limit as $\kappa\to0$ in \eqref{E:HIMPROVED},  
\be\label{E:HIMPROVED2}
\bar \partial^2  h = \frac{ g^2 \bar \partial v\cdot \tau }{ v \cdot n } \,,
\ee
where $v \cdot n >0$ by the Taylor sign condition.    By passing to the limit as $\kappa \to 0$ in Lemma \ref{optimal_q}, we have that $\Psi\in L^2_t H^{5}_x$,
and we also have that $\Psi \in L^2_t H^{3.5}_x$, from which it follows that $\Psi\in C^0_t H^{4}_x$.
Since $q\in C^0 _tH^4_x$ and , and since $v=-\nabla_\Psi q\in L^2_tH^{4}_x$,
it follows that $v\in L^2_tH^{4}_x \cap  C^0 _tH^{3}_x$; hence, 
 $\t v\cdot\tau \in L^2_tH^{2.5}(\Gamma)$.  Then, since $g$ and  $n$ are in $ L^ \infty _tH^{3}(\Gamma)$,  and $v \cdot n \in  L^ \infty _tH^{2.5}(\Gamma)$, we see
 from  \eqref{E:HIMPROVED2} that
 $$
 h \in L^2_tH^{4.5}(\Gamma)\,.
 $$
Since $h_t = g v\cdot n$ on $\Gamma$, we then have that
 $$
 h_t \in L^2_tH^{3.5}(\Gamma)\,,
 $$
 from which it follows that 
  $$
 h \in C^0_tH^{4}(\Gamma)\,.
 $$
 Since  $h_t = g v\cdot n$ on $\Gamma$, and since $g$ and  $n$ are in $ C^0 _tH^{3}(\Gamma)$ and $v \in C^0_tH^{2.5}(\Gamma)$,  then
 $h_t\in C^0_tH^{2.5}(\Gamma)$.
Using that $h_{tt} = \partial_t\left[g v\cdot n\right]$  on $\Gamma$ and the fact that $v_t \in C^0_tH^{0.5}(\Gamma)$, we also have that
 $h_{tt}\in C^0_tH^{0.5}(\Gamma)$.

It remains to show that $q_{tt}\in C^0_tL^2_x$. From ~\eqref{eq:ALEheat},
\[
q_{tt} = (\Delta q)_t - (v\cdot w)_t.
\]
Given the regularity  already established for $q$, $q_t$, $\Psi$, and $\Psi_t$, we need to establish the regularity for $w_{t}=\Psi_{tt}$.
Since
$h_{tt}\in C^0_tH^{0.5}(\Gamma)$, then $\Psi_{tt} \in C^0_tH^1(\Omega)$, and we find that $q_{tt} \in C^0_tL^2(\Omega)$.

\section{The vanishing surface tension limit}\label{se:vanish}
Local-in-time existence for the Stefan problem with surface tension has been studied in a variety of papers; see, for example, \cite{HaGu1,Ha1, Ra, EsPrSi}.
For any $(q^\sigma_0,h_0^\sigma)\in H^4(\Omega)\times H^{5.5}(\Omega)$ there exists a local-in-time classical solution $(q,h)$ to the Stefan problem with surface tension in 
the harmonic gauge:
\begin{subequations}
\label{eq:ALESIGMA}
\begin{alignat}{2}
q_t- \Delta _\Psi q&=-v\cdot w&& \ \text{ in } \ \Omega \times (0,T] \,,\\
v^i+A^k_iq,_k&=0&&\ \text{ in } \ \Omega  \times (0,T] \,,\\
q&=-\sigma\frac{\t^2h}{(1+|\t h|^2)^{\frac32}}&& \ \text{ on } \ \Gamma \times [0,T] \,,\label{eq:bdryregsigma}\\
\Delta \Psi&=0&& \ \text{ on } \ \Omega  \times [0,T] \,,\\
\Psi&= \text{Id}+ h\, N&& \ \text{ on } \ \Gamma  \times [0,T] \,,\\
\Psi&= \text{Id}&& \ \text{ on } \ \partial\Omega_{top}  \times [0,T] \,,\\
\Psi_t\cdot n(t)&=-v\cdot n(t)&& \ \text{ on } \ \Gamma  \times (0,T] \,,\label{E:NEUMANNSIGMA}\\
v\cdot N&=0&& \ \text{ on } \ \g\Omega_{\text{\text{top}}} \times [0,T] \,,\\
\Psi(0,\cdot)=\Psi_0& \ \   q(0,\cdot)=q^\sigma_0=p_0\circ\Psi_0\,,&&\,.
\end{alignat}
\end{subequations}

With $\sigma>0$, we can prove the following energy identities in the same way as Lemma~\ref{lm:i1}. 
\begin{lemma}\label{lm:isigma}
Let $(q,h)$ be a local-in-time solution to~\eqref{eq:ALESIGMA} defined on the time interval $[0,T_\sigma].$
Then we have the following energy identity:
\beas
F^{\sigma}(q,\Psi)(t)&=&\int_0^t\int_{\Omega}\{\mathcal{R}_1+\mathcal{R}_3+\mathcal{R}_5+\mathcal{S}_1+\mathcal{S}_3\}+
\int_0^t\int_{\Gamma}\{\mathcal{R}_2+\mathcal{R}_4+\mathcal{R}_6+\mathcal{S}_2+\mathcal{S}_4\}\\
&&+\int_0^t\int_{\Gamma}\{\mathcal{R}^{\sigma}_2+\mathcal{R}^{\sigma}_4+\mathcal{R}^{\sigma}_6
+\mathcal{S}^{\sigma}_2+\mathcal{S}^{\sigma}_4\},
\eeas
where 
\be
F^{\sigma}:=\F+
\frac{\sigma}{2}\sum_{a+2b\leq4}\Big||\t\Psi|^{-3/2}J^{-1/2}\t^{a+1}\partial_t^bh\Big|_{L^{\infty}_tL^2_x}^2
+\sigma\sum_{a+2b\leq3}\Big|\t\Psi|^{-3/2}J^{-1/2}\t^{a+1}\partial_t^b h_{t}\Big|_{L^2_tL^2_x}^2, \label{E:FSIGMA}
\ee
with the energy $\mathcal F$ 
and error terms $\mathcal{R}_i$, $i=1,\dots, 6$, $\mathcal{S}_i$, $i=1,\dots, 4$ given by~\eqref{eq:coercive} and Lemma~\ref{lm:i1} respectively, wherein we drop the $\kappa$-dependent terms. 
Furthermore, 
\be\label{eq:r2sigma}
\begin{array}{l}
\displaystyle
\mathcal{R}_2^{\sigma}:=-\sigma\t\left(\frac{\t^2 h}{|\t\Psi |^3}\right)
\t^4\Psi \cdot A^1_{\bullet}\t^4v\cdot  A^2_{\bullet}\\
\displaystyle
\qquad +\sigma\big\{-\t^5 h\cdot \t\big(-\t^4 h_t|\t\Psi |^{-3}\big)
+\t^5 h h_t|\t\Psi |^{-3}\big\}\\
\displaystyle
\qquad+\frac{\sigma}{2}|\t^5 h|^2\g_t(|\t\Psi |^{-3}J^{-1})
+\sigma\t^4 w\cdot A^2_{\bullet}\big[\t^4( \frac{\t^2 h}{|\t\Psi|^3})
-\t^6 h|\t\Psi|^{-3}\big]\\
\displaystyle
\qquad
+\sigma\sum_{l=1}^4a_l\t^{4-l}(w+v)\cdot\t^l A^2_{\bullet}
\t^4 \big(\frac{\t^2 h}{|\t\Psi|^{3}}\big)
\end{array}
\ee
\be\label{eq:r4sigma}
\begin{array}{l}
\displaystyle
\mathcal{R}_4^{\sigma}:=-\sigma\big(\frac{\t^2 h}{|\t\Psi|^3}\big),_1
\t^2\g_t\Psi\cdot A^1_{\bullet}\t^2\g_tv\cdot A^2_{\bullet}\\
\displaystyle
\qquad+\frac{\sigma}{2}|\t^3\g_t h|^2\g_t(|\t\Psi|^{-3}J^{-1})
+\sigma\t^3\g_t w\cdot A^2_{\bullet}\big[\t\g_t(\frac{\t^2 h}{|\t\Psi|^3})-\frac{\t^3 h_t}{|\t\Psi|^3}\big]\\
\displaystyle
\qquad+\sigma\sum_{l+l'\geq1}a_{l,l'}\t^2\g_t \big(\frac{\t^2 h}{|\t\Psi|^2}\big)
\t^l\g_t^{l'}(w+v)\cdot\t^{2-l}\g_t^{1-l'} A^2_{\bullet};
\end{array}
\ee
\be\label{eq:r6sigma}
\begin{array}{l}
\displaystyle
\mathcal{R}_6^{\sigma}:=-\sigma\big(\frac{\t^2 h}{|\t\Psi|^3}\big),_1\Psi_{tt}\cdot A^1_{\bullet}\g_{tt}v\cdot A^2_{\bullet}
+\frac{\sigma}{2}|\t h_{tt}|^2(|\t\Psi|^{-3}J^{-1})_t
-\sigma\t h_{tt} h_{ttt}\t(|\t\Psi|^{-3})\\
\displaystyle
\qquad+\sigma w_{tt}\big(\g_{tt}(\frac{\t^2 h}{|\t\Psi|^3})
- A^2_{\bullet}\frac{\t^2 h_{tt}}{|\t\Psi|^3}\big)
+\sigma\sum_{l=0}^1a_l\g_{tt} \frac{\t^2 h}{|\t\Psi|}\g_t^l(w+v)\cdot\g_t^{2-l} A^2_{\bullet};
\end{array}
\ee
\be\label{E:S2SIGMA}
\begin{array}{l}
\displaystyle
\mathcal{S}_2^{\sigma}
:=\sigma\sum_{l=1}^3a_l\t^{3-l}(w+v)\cdot\t^l A^2_{\bullet}\t^3\g_t\t \big(\frac{\t h}{|\t\Psi|}\big)\\
\displaystyle
\qquad +\sigma\sum_{a+b<4\atop a\leq 3,\,b\leq1}\t^4 w\cdot A^2_{\bullet}\t^{a+1}\g_t^b h\t^{3-a}\g_t^{1-b}\big(|\t\Psi|^{-1}\big)
\end{array}
\ee
\be\label{E:S4SIGMA}
\begin{array}{l}
\displaystyle
\mathcal{S}_4^{\sigma}:=\sigma\sum_{l+l'<2\atop l,l'\leq1}a_{l,l'}\t^{1-l}\g_t^{1-l'}(w+v)\cdot\t^l\g_t^{l'} A^2_{\bullet}\t\g_{tt}
\t \big(\frac{\t h}{|\t\Psi|}\big)\\
\displaystyle
\qquad+\sigma\sum_{l+l'<3\atop l\leq1,\,l'\leq2}b_{l,l'}\t\g_t w\cdot A^2_{\bullet}\t^{l+1}\g_t^{l'} h\t^{1-l}\g_t^{2-l'}\big(|\t\Psi|^{-1}\big).
\end{array}
\ee
\end{lemma}
%
%
%



\begin{remark}
The higher-order energy function $F^\sigma$ is obtained by proceeding in the same way as in the derivation of the energy function $\F_\kappa$ in Section~\ref{su:proofs}.
The essential difference is the nontrivial trace of the term $\t^4 q$ on the boundary $\Gamma$. Since $q=\sigma\mathcal H$ on $\Gamma$
an integration by parts with respect to $x_k$ in the integral 
\[
\int_\Omega A^k_i\t^4q,_k\t^4 v^i
\]
leads to an additional $\sigma$-dependent energy term in~\eqref{E:FSIGMA}. 
\end{remark}

%
\subsection{Nonlinear energy estimates}

In the following proposition we prove the basic energy estimate in analogy to 
Proposition~\ref{pr:basic1}. 
Most importantly, we establish a nonlinear polynomial inequality for the energy $\E_\sigma$ with $\sigma$-independent coefficients.
As a consequence, we show that under the assumptions of Theorem~\ref{th:main2} the time interval $T_\sigma$ is independent of $\sigma.$
%
%
\begin{prop}\label{pr:basic2}
Let $(q_0^{\sigma}, h_0^{\sigma})_{\sigma\geq0}$ be a given family of well-prepared initial conditions
in the sense of Definition~\ref{def:wellprep}. There exists a constant $C$ independent of $\sigma$ and a universal polynomial $P$  
such that for any $t\in [0,T^{\sigma}]$ the following bound holds
\be\label{E:PINEQUALITY}
\E^{\sigma}(t)\leq C\E^{\sigma}(0)+C(t+\sqrt{t})P(\E^{\sigma}).
\ee
In particular, there exists a time $T>0$ {\em independent} of $\sigma$, a constant $C^*>0$
and the solution $(q^{\sigma},\Psi^{\sigma})$ to the Stefan problem with surface tension defined on $[0,T]$
satisfying the bound
\[
\E^{\sigma}(q^{\sigma},\Psi^{\sigma})(t)\leq C^*,\quad0\leq\sigma\leq1, \ \ t\in[0,T].
\] 
\end{prop}
\begin{proof}
In comparison to the estimates for the classical Stefan problem carried over in Section~\ref{se:est1}
the only new error terms to estimate are the terms $\mathcal{R}^{\sigma}_2$, $\mathcal{R}^{\sigma}_4$,
$\mathcal{R}^{\sigma}_6$, $\mathcal{S}^{\sigma}_2$, $\mathcal{S}^{\sigma}_4$
given in the statement of Lemma~\ref{lm:isigma}.
%
%
\subsubsection*{Estimating $\int_0^t\int_\Gamma\mathcal{R}_2^{\sigma}$ defined by~\eqref{eq:r2sigma}}
We start by bounding the first term on the right-hand side of~(\ref{eq:r2sigma}).
\[
\begin{array}{l}
\displaystyle
\sigma\Big|\int_0^t\int_{\Gamma}\t^2\big(\frac{\t h}{|\t\Psi\t h|}\big)
\t^4\Psi\cdot A^1_{\bullet}\t^4v\cdot A^2_{\bullet}\Big|
\lesssim \int_0^tP(|\sqrt{\sigma}\t h|_4)|\sqrt{\sigma}\Psi|_5|v|_3\\
\displaystyle
\lesssim  P(|\sqrt{\sigma}\t h|_{L^{\infty}H^4})|\sqrt{\sigma}\Psi|_{L^{\infty}H^5}\sqrt{t}|v|_{L^2H^3}
\lesssim \sqrt{t}P(\E^{\sigma}\t h).
\end{array}
\]
The second and the third term on the right-hand side of~(\ref{eq:r2sigma}) are estimated analogously and rely on the standard
$L^{\infty}-L^2-L^2$ estimates. 
As for the fourth term on the right-hand side of~(\ref{eq:r2sigma}), note that due to~(\ref{eq:bulletk}) 
\beas
\sigma\Big|\int_0^t\int_{\Gamma}J^{-1}\t^4h_t\big(\t^4(\frac{\t^2h}{|\t\Psi|^3})
-\frac{\t^6h}{|\t\Psi|^3})\big)\Big|
&\lesssim&\int_0^t|\sqrt{\sigma}\t^3h_t|_0\sqrt{\sigma}\big|\t^4(\frac{\t^2 h}{|\t\Psi|^3})
-\frac{\t^6h}{|\t\Psi|^3})\big|_1\\
&\lesssim&\sqrt{t}P(\E^{\sigma}),
\eeas
where the last estimate follows in the standard way: terms with less derivatives are bounded in the $L^{\infty}$-norm and
then by the Sobolev embedding theorem.
In the last term on the right-hand side of~(\ref{eq:r2sigma}), the hardest case to deal with is $l=4$.
Note that
\[
\begin{array}{l}
\displaystyle
\sigma\int_0^t\int_{\Gamma}(v+w)\cdot\t^4 A^2_{\bullet}\t^4\big(\frac{\t^2h}{|\t\Psi|}\big)
=\sigma\int_0^t\int_{\Gamma}(v+w)\cdot\t^4 A^2_{\bullet}\big(\t^6h|\t\Psi|^{-3}\big)\\
\displaystyle
\qquad+\sigma\int_0^t\int_{\Gamma}(v+w)\cdot\t^4 A^2_{\bullet}\sum_{l'=1}^4a_{l'}\t^{6-l'}
h\t^{l'}(|\t\Psi|^{-2})
=:I+II.
\end{array}
\]
The more challenging term to estimate is term $I$. Since $A^2_\bullet = J^{-1}(\t h, -1)$ we have the identity
\begin{align*}
I & = \sigma \int_0^t\int_{\Gamma}(v+w)\cdot\t^4 \left(J^{-1}(\t h, -1)\right)\big(\t^6h|\t\Psi|^{-3}\big)  \\
& = \sigma \int_0^t\int_{\Gamma}J^{-1}(v+w)\cdot (\t^5 h, 0)\left(\t^6h|\t\Psi|^{-3}\right)  +  \sigma\sum_{m=1}^3c_m \int_0^t\int_{\Gamma}\t^m \left(J^{-1}\right)(v+w)\cdot\t^{4-m}(\t h, -1)\big(\t^6h|\t\Psi|^{-3}\big)\\
& = : I_A + I_B
\end{align*}
for some universal constants $c_m\in\mathbb R.$ Note that when $m=4$ term $(v+w)\cdot\t^{4-m}(\t h, -1)$ vanishes since $(\t h,-1)$ is parallel to $\vec n$ and $v\cdot n =- w\cdot n$ by~\eqref{E:NEUMANNSIGMA}.
\begin{align*}
\big|I_A\big|& =\sigma\Big|\int_0^t\int_{\Gamma}J^{-1}(v^1-w^1)\t^5h\t^6h|\t\Psi|^{-3}\Big| =\frac{1}{2}\sigma\Big|\int_0^t\int_{\Gamma}J^{-1}\t(|\t^5h|^2)(v^1-w^1)|\t\Psi|^{-3}\Big|\\
& =\frac{1}{2}\sigma\Big| \int_0^t\int_{\Gamma}|\t^5h|^2\t\left(J^{-1}(v^1-w^1)|\t\Psi|^{-3}\right)\Big| \lesssim tP(\E^{\sigma}),
\end{align*}
where 
we have used the parametric representation of $\Psi$ in terms of $h$ and integrated by parts.
The last inequality is rather standard and follows by estimating
$\t\big((v^1-w^1)|\t\Psi|^{-3}\big)$ in $L^{\infty}$ norm and further via Sobolev inequality, 
where we also use $\sigma|\t^5h|_{L^{\infty}_tL^2_x}\lesssim\E^{\sigma}$.
Terms $I_B$ and $II$
are easily estimated via the standard energy $L^{\infty}-L^2-L^2$ bounds and Sobolev imbedding,
and the same applies to the 
remaining cases $l=1,2,3$. 
When estimating the fourth term on the right-hand side of~(\ref{eq:r2sigma})
first integrate by parts so to remove one $\t$-derivative from $\t^6\Psi$ term and then apply the standard energy
estimates.  
\subsubsection*{Estimating $\int_0^t\int_\Gamma\mathcal{R}_4^{\sigma}$ defined by~\eqref{eq:r4sigma}} The estimates are completely analogous to the ones for 
$\mathcal{R}_2^{\sigma}$.
\subsubsection*{Estimating $\int_0^t\int_\Gamma\mathcal{R}_6^{\sigma}$ defined by~\eqref{eq:r6sigma}}
The first term on the right-hand side of~(\ref{eq:r6sigma}) is estimated analogously to the first term on the right-hand side of~(\ref{eq:r2sigma}).
Note that
\[
\Big|\frac{\sigma}{2}\int_0^t\int_{\Gamma}|\t h_{tt}|^2(|\t\Psi|^{-3}J^{-1})_t\Big|
\lesssim(|\t\Psi|^{-3}J^{-1})_t|_{\infty}\sigma\int_0^t|\t h_{tt}|_2^2
\lesssim t|\t h|_{\infty}|\t h_{\kappa t}|_{\infty}\E^{\sigma}
\lesssim tP(\E^{\sigma}),
\]
where we use Sobolev inequality and the definition of $\E^{\sigma}$
to infer
\[
|\t h_{\kappa t}|_{\infty}^2\lesssim|\t h_{\kappa t}|_1^2\lesssim\int_{\Gamma}(-q,_2)|\t^2 h_{\kappa t}|^2\lesssim\E^{\sigma}.
\]
and similarly
\be\label{eq:1}
|\t h|_{\infty}^2\lesssim\int_{\Gamma}(-q,_2)|\t^2 h|^2\lesssim\E^{\sigma}.
\ee
Space-time integrals of the third and fourth term on the right-hand side of~(\ref{eq:r6sigma}) are bounded in the usual way by $P(\E^{\sigma})$.
To bound the last term on the right-hand side of~(\ref{eq:r6sigma}) we distinguish the cases $l=0$ and $l=1$. 
If $l=1$, by Leibniz rule expand
\[
\g_{tt}\frac{\t^2h}{|\t\Psi|}=\t^2h_{tt}|\t\Psi|^{-1}+2\t^2h_t(|\t\Psi|^{-1})_t
+\t^2h(|\t\Psi|^{-1})_{tt}.
\]
For the first two terms above integrate by parts to move one $\t$ derivative away from $\t^2h_{tt}$ and
$\t^2h_t$.
Then use the 
standard $L^{\infty}-L^2-L^2$ type estimates 
as well as the bound $|\t v_t|_{L^{\infty}_tL^2_x}\lesssim\|q_t\|_{L^{\infty}_tH^{2.5}_x}$ to get
the desired estimate. For the third term on right-hand side above we have
\[
\begin{array}{l}
\displaystyle
\sigma\Big|\int_0^t\int_{\Gamma}\big(\t^2h(|\t\Psi|^{-1})_{tt}\big)\g_t(w+v)\cdot\g_t A^2_{\bullet}\Big|\\
\displaystyle
\qquad\leq|\t^2h|_{\infty}\sqrt{t}|\sqrt{\sigma}(|\t\Psi|^{-1})_{tt}|_{L^2_tL^2_x}|\g_t(v+w)|_{L^{\infty}_tL^2_x}
|\g_t A^2_{\bullet}|_{L^{\infty}_tL^{\infty}_x}
\lesssim\sqrt{t}P(\E^{\sigma}).
\end{array}
\]

\subsubsection*{Estimating $\mathcal{S}^{\sigma}_2$ and $\mathcal{S}^{\sigma}_4$ defined by~\eqref{E:S2SIGMA} and~\eqref{E:S4SIGMA} respectively}
The estimates are straightforward and follow the same principle: terms with least amount of derivatives are bounded via Sobolev
embedding by the $\sigma$-independent energy $\E(q^{\sigma},h^{\sigma})$. 
\par
Summing up the above estimates we prove the first inequality in the proposition. 
The existence of a
$\sigma$-independent time $T$ follows from the standard continuity argument and the fact that constant $C$ in~\eqref{E:PINEQUALITY}
is $\sigma$-independent. Since $\E^{\sigma}(0)\to\E(0)$ as $\sigma\to0$ due to our assumption on initial data, the last statement of the proposition follows. 
\end{proof}
\subsection*{Proof of Theorem~\ref{th:main2}}
Recall the definition~\eqref{E:QHNORM} of $\|(q,h)\|_{C^1_tC^0_x\cap C^0_tC^2_x}$.
Assume that $\|(q^{\sigma},h^{\sigma})-(q^0,h^0)\|_{C^1_tC^0_x\cap C^0_tC^2_x}$ does {\em not} 
converge to $0$ as $\sigma\to0$.
Then there exists an $\epsilon>0$ and a subsequence 
$(\sigma_n)_{n\in\N}$, $\sigma_n\to0$ as $n\to\infty$, such that
\be\label{eq:contr1}
\|(q^{\sigma_n},h^{\sigma_n})-(q^0,h^0)\|_{C^1_tC^0_x\cap C^0_tC^2_x}\geq\epsilon\quad\forall n\in\N.
\ee
Since $\E(q^{\sigma_n},h^{\sigma_n})\leq C$, there exists a subsequence 
of $(q^{\sigma_n},h^{\sigma_n})_n$ 
(without loss of generality indexed again by $(\sigma_n)$) and 
$(\bar{q},\bar{h})\in\mathcal{S}$ such that
\[
(q^{\sigma_n},h^{\sigma_n})\rightharpoonup(\bar{q},\bar{h}),
\quad\text{weakly in}\,\,\,\,\mathcal{S},
\]
where we recall that $\mathcal S$ is defined in~\eqref{eq:solspace}.
Note that the injection operator $I:\mathcal{S}\to C^1_tC^0_x\cap C^0_tC^2_x$ is compact. 
Hence $(q^{\sigma_n},h^{\sigma_n})\to(\bar{q},\bar{h})$
in $C^1_tC^0_x\cap C^0_tC^2_x$ where  $(\bar{q},\bar{h})\Big|_{t=0}=(q_0,h_0)$ due to the
property $3)$ in the Definition~\ref{def:wellprep} of the well-prepared initial data. 
Since $\sigma_n\to0$ as $n\to\infty$, $(\bar{q},\bar{h})$ 
solves the classical Stefan problem
with those initial conditions. From the uniqueness statement of Theorem~\ref{th:main},
we conclude that $(\bar{q},\bar{h})=(q,h)$. Thus 
$(q^{\sigma_n},h^{\sigma_n})\to(q,h)$ in $C^1_tC^0_x\cap C^0_tC^2_x$
contradicting~(\ref{eq:contr1}).

\section{The three-dimensional case}\label{se:3D}

In this section, we briefly sketch how to adapt the analysis of the previous sections to prove theorems
analogous to~Theorem~\ref{th:main} and~\ref{th:main2} in the {\em three} dimensional setting.
We assume now that $\Omega(t)$ is an evolving phase inside the reference domain 
\[
\Omega:=\T^2\times (0,1),
\] 
where $\T^2$ is the $2$-torus. 
Initially at $t=0$ the moving boundary 
\[
\Gamma_0=\T^2\times\{x^3=h_0(x)\} 
\] 
is parametrized as a graph over $\Gamma = \T^2\times\{x^3=0\}$ by the height function $h_0.$ 
The {\it top} boundary $\partial\Omega_{\text{top}}=\T^{2}\times\{x^3=1\}$ 
is fixed and the temperature $p$ satisfies the homogeneous 
Neumann boundary condition on $\partial\Omega$ just like in~(\ref{eq:fixedbdry}).
We parametrize boundary as a graph over $\Gamma$ with the height function $h(t,x')$, 
where $x':=(x^1,x^2)$. Using the harmonic coordinates we can change of variables as in~(\ref{eq:ALE0}) to obtain a fixed 
boundary problem given by~(\ref{eq:ALE}).
The associated energy is given by
\be
\label{eq:ALEenergy3D}
\begin{array}{l}
\displaystyle
\E^{3D}(t)=\E^{3D}(q,h)(t):=\sum_{|\alpha|+2b\leq5}\|\bar{\nabla}^{\alpha}\partial_t^bv\|_{L^2_tL^2_x}^2
+\frac{1}{2}\sum_{|\alpha|+2b\leq4}\|\bar{\nabla}^{\alpha}\partial_t^bv\|_{L^{\infty}_tL^2_x}^2\\
\displaystyle
\quad+\frac{1}{2}\sum_{|\alpha|+2b\leq5}|\sqrt{-q,_2}\bar{\nabla}^{\alpha}\partial_t^bh|_{L^{\infty}_tL^2_x}^2
+\sum_{|\alpha|+2b\leq4}|\sqrt{-q,_2}\bar{\nabla}^{\alpha}\partial_t^bh_{t}|_{L^2_tL^2_x}^2\\
\displaystyle
\quad+\frac{1}{2}\sum_{|\alpha|+2b\leq5}\|\bar{\nabla}^{\alpha}\partial_t^bq+\bar{\nabla}^{\alpha}\partial_t^b\Psi\cdot v\|_{L^{\infty}_tL^2_x}^2
+\sum_{|\alpha|+2b\leq4;}\|\bar{\nabla}^{\alpha}\partial_t^bq_t+\bar{\nabla}^{\alpha}\partial_t^b\Psi_t\cdot v\|_{L^2_tL^2_x}^2.
\end{array}
\ee
In the above definition, $\alpha=(\alpha_1,\alpha_2)$ is a multi-index of order $|\alpha|=\alpha_1+\alpha_2$, whereby $\alpha_1,\alpha_2$ are
non-negative integers. Symbol $\bar{\nabla}$ refers to differentiation in tangential directions, i.e.
$\bar{\nabla}^{\alpha}:=\g_{x^1}^{\alpha_1}\g_{x^2}^{\alpha_2}$. 
The three-dimensional Taylor sign condition for a function $q$ reads:
\be\label{eq:taylor3D}
\min_{x'\in\Gamma}(q,_3)(t,x',0)>0.
\ee
The following theorem holds:  
\begin{theorem}
Let the initial conditions $(q_0,h_0)$ be such that $\E^{3D}(q_0,h_0)<\infty$  and let $q_0$ satisfy the Taylor sign condition~(\ref{eq:taylor3D}).
Then the three-dimensional one-phase classical Stefan problem
is locally-in-time well-posed, i.e. there is a $T>0$ such that 
there exists a unique solution $(q,h)$ with the initial data $(q_0,h_0)$ on  
the time interval $[0,T]$. In addition it satisfies the bound:
\[
\E^{3D}(q,h)\leq 2\E^{3D}(q_0,h_0).
\] 
Furthermore, let $(q_0^{\sigma}, \Psi_0^{\sigma})_{\sigma\geq0}$ be a given family of well-prepared initial conditions
in the sense of Definition~\ref{def:wellprep}. Assume that it satisfies 
the Taylor sign condition~(\ref{eq:taylor3D}) and the corresponding compatibility conditions. By 
$(q^{\sigma},h^{\sigma})_{\sigma\geq0}$ we denote the associated family of solutions to the 
problem~(\ref{eq:ALE}).
There exists a $\sigma$-independent time $T>0$ and a constant $C$ depending only on $(q_0,h_0)$ such that
\[
\E^{3D,\sigma}(q^{\sigma},h^{\sigma})(T)\leq C\quad\sigma\geq0.
\]
for all $\sigma\geq0$. As a consequence, sequence $(q^{\sigma},h^{\sigma})$ converges to 
the unique solution $(q,h)$ of the classical Stefan problem~(\ref{eq:ALE}) with 
$\sigma=0$ in $C_t^1C^0_x\cap C^0_tC^2_x$-norm. 
\end{theorem}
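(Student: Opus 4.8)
The plan is to transcribe the entire $\kappa$-regularization machinery of Sections~\ref{se:reg}--\ref{se:vanish} to the domain $\Omega=\T^2\times]0,1[$, replacing the single tangential operator $\t^a$ by the tangential multi-index operator $\bar{\nabla}^{\alpha}=\g_{x^1}^{\alpha_1}\g_{x^2}^{\alpha_2}$ and the stability factor $(-q,_2)$ by $(-q,_3)$ in every boundary term. First I would set up the three-dimensional $\kappa$-problem exactly as~(\ref{eq:reg}), with the horizontal convolution by layers now carried out in both tangential variables $x'=(x^1,x^2)$ and with $\A^2_{\bullet}$ replaced by $\A^3_{\bullet}=(\bar{\nabla}h_{\kappa},-1)/J_{\kappa}$; smooth solutions for fixed $\kappa$ follow from the same uniformly parabolic theory as in Theorem~\ref{kthm}. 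The energy identities analogous to Lemmas~\ref{lm:i1} and~\ref{lm:i2} are obtained by applying $\bar{\nabla}^{\alpha}\partial_t^b$ to~(\ref{eq:veq}) for the ranges $|\alpha|+2b\le4$ and $|\alpha|+2b\le5$, multiplying by $\bar{\nabla}^{\alpha}\partial_t^bv^i$ and integrating over $\Omega$. The three integrations by parts that produced the coercive boundary form $\int_{\Gamma}(-q,_2)|\t^4\Lambda_{\kappa}\Psi\cdot\A^2_{\bullet}|^2$ in~(\ref{eq:iden4.1}) now produce $\int_{\Gamma}(-q,_3)|\bar{\nabla}^{\alpha}\Lambda_{\kappa}\Psi\cdot\A^3_{\bullet}|^2$, whose sign is controlled by the Taylor condition~(\ref{eq:taylor3D}); the natural coercive form $\F^{3D}_{\kappa}$ and the coercivity bound $\E^{3D}_{\kappa}\lesssim P(\F^{3D}_{\kappa})$ (the analog of Proposition~\ref{pr:basic}) then follow verbatim from the elliptic estimates for $\Delta_{\Psi}q=q_t+v\cdot w$ together with the Dacorogna--Moser inequality~(\ref{DM}), which holds unchanged on the three-dimensional domain.

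The one genuine change is bookkeeping in the Sobolev embeddings, and this is precisely why $\E^{3D}$ carries one more tangential derivative than $\E$. On the bulk $\Omega\subset\R^3$ one has $H^2(\Omega)\hookrightarrow L^\infty(\Omega)$, and on the boundary $\Gamma=\T^2$ one has $H^s(\Gamma)\hookrightarrow L^\infty(\Gamma)$ only for $s>1$, whereas the corresponding two-dimensional thresholds were lower. Consequently every $L^\infty$--$L^2$--$L^2$ estimate of Section~\ref{se:est1} must be reindexed so that the factor placed in $L^\infty$ retains enough regularity; carrying the energy up to $|\alpha|+2b\le5$ supplies exactly the extra derivative needed, and with this single shift each error bound $|\int_0^t\int_{\Omega}\mathcal{R}_i|+|\int_0^t\int_{\Gamma}\mathcal{R}_i|\lesssim(t+\sqrt{t})P(\E^{3D}_{\kappa})$ goes through as before. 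The delicate top-order term, where all tangential derivatives hit $\A^3_{\bullet}$, is again handled by the structural identity of Lemma~\ref{lm:id}: since integration by parts on $\T^2$ is available in both tangential directions and the orthogonality of $\bar{\nabla}\Psi_{\kappa}$ and $\A^3_{\bullet}$ persists, the same symmetrization extracts a full tangential derivative and demotes the term to admissible order, while the commutator Lemma~\ref{lm:comm} applies equally to convolution by layers in two variables. This yields the basic inequality $\E^{3D}_{\kappa}(t)\le C\E^{3D}_{\kappa}(0)+C(t+\sqrt{t})P(\E^{3D}_{\kappa})$, the analog of Proposition~\ref{pr:basic1}; a continuation argument gives a $\kappa$-independent existence time and, after passing $\kappa\to0$, the bound $\E^{3D}(q,h)\le2\E^{3D}(q_0,h_0)$, with uniqueness following from the energy method exactly as in Subsection~\ref{se:proof1}.

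For the surface-tension part I would replace the regularized Dirichlet condition by the three-dimensional analog of~(\ref{eq:bdryregsigma}), namely $q=\sigma\,\Lambda_{\kappa}\,\div_{\Gamma}\!\big(\bar{\nabla}\Lambda_{\kappa}h/\sqrt{1+|\bar{\nabla}\Psi_{\kappa}|^2}\big)-\kappa^2 v\cdot\A^3_{\bullet}+\beta$, and augment $\F^{3D}_{\kappa}$ by $\tfrac{\sigma}{2}\sum_{|\alpha|+2b\le5}|\bar{\nabla}^{\alpha+1}\partial_t^b\Lambda_{\kappa}h|_{L^\infty_tL^2_x}^2+\sigma\sum_{|\alpha|+2b\le4}|\bar{\nabla}^{\alpha+1}\partial_t^b\Lambda_{\kappa}h_t|_{L^2_tL^2_x}^2$ (with the same bounded weights $|\bar{\nabla}\Psi_{\kappa}|^{-3/2}J_{\kappa}^{-1/2}$ as in Lemma~\ref{lm:isigma}). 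The main obstacle, and the only place where the three-dimensional geometry is not cosmetic, is that the mean curvature is now a genuine quasilinear second-order elliptic operator rather than the scalar $\t^2h/(1+|\t h|^2)^{3/2}$ of~(\ref{eq:ALEdirsigma}). I expect the crux to be verifying coercivity of its top-order contribution: differentiating the boundary identity by $\bar{\nabla}^{\alpha}\partial_t^b$ produces, at leading order, the linearized mean-curvature operator $\sigma\big(\delta^{ij}-\g_ih\,\g_jh/(1+|\bar{\nabla}h|^2)\big)\g_i\g_j$ applied to $\bar{\nabla}^{\alpha}\partial_t^bh$, whose symbol is positive definite, so integrating by parts against $\bar{\nabla}^{\alpha}\partial_t^bh_t$ yields exactly the coercive term $\sigma|\bar{\nabla}^{\alpha+1}\partial_t^b\Lambda_{\kappa}h|^2$ up to lower-order commutators. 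Once this is in hand, the new error terms $\mathcal{R}_i^{\sigma},\mathcal{S}_i^{\sigma}$ are estimated as in Proposition~\ref{pr:basic2}, splitting each product so that all but the single highest-derivative factor are absorbed into the $\sigma$-independent energy $\E^{3D}(q^\sigma,h^\sigma)$ while the top factor is controlled by the $\sigma$-dependent boundary terms; this gives the $\sigma$-uniform bound $\E^{3D,\sigma}_{\kappa}(t)\le C\E^{3D,\sigma}_{\kappa}(0)+C(t+\sqrt{t})P(\E^{3D,\sigma}_{\kappa})$ on a $\sigma$-independent interval $[0,T]$. Finally, the compact embedding $\mathcal{S}^{3D}\hookrightarrow C^{1,2}$ lets one extract a subsequence converging in $C^{1,2}$ whose limit solves the classical three-dimensional problem with $\sigma=0$; the uniqueness from the first part identifies this limit with $(q^0,\Psi^0)$ and, by the contradiction argument used in the proof of Theorem~\ref{th:main2}, forces convergence of the full family.
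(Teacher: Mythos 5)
Your proposal follows essentially the same route as the paper, which itself only sketches the three-dimensional case by indicating that the $\kappa$-regularization, energy identities, coercivity, and error estimates of Sections 2--3 carry over with $\t^a$ replaced by $\bar{\nabla}^{\alpha}$, $(-q,_2)$ by $(-q,_3)$, and one additional tangential derivative in the energy to compensate for the shifted Sobolev embedding thresholds. Your added remarks on the quasilinear mean-curvature operator and the positive-definiteness of its linearized symbol are consistent with, and somewhat more explicit than, the paper's own treatment.
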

\begin{remark}
Note that the definition of $\E^{3D}$ contains time derivatives. Thus, to make sense out of the
assumption $\E^{3D}(q_0,h_0)<\infty$, we express the time derivatives $\g_tq_0$ and $\g_th_0$ in terms of the spatial derivatives 
as explained in Remark~\ref{re:mainth}.
\end{remark}
\subsection*{Acknowledgements}
MH was supported by the National Science Foundation under grant CMG-0530862
and the European Research Council under grant ERC-240385.
SS was supported by the National Science Foundation under grants DMS-1001850 and DMS-1301380,
and by the Royal Society Wolfson Merit Award.

%

\appendix
\renewcommand{\theequation}{\Alph{section}.\arabic{equation}}
\renewcommand{\thetheorem}{\Alph{section}.\arabic{theorem}}

\section{Modifications of our analysis for a more general initial domain}\label{A:SMOOTHREFERENCE}

In this section we explain how to construct a smooth reference interface for a general graph $\Gamma_0=\{{\bf x} \ | {\bf x} = (x,h_0(x))\}\subset \T^1\times[0,1),$ 
where the size of $|h|_{4.5}$ is not necessarily small.
For any $\varepsilon>0$ we define 
\[
h_0^\varepsilon(x) = \int_{\T^1} h(y) \rho_\varepsilon(x-y), \ x\in\T^1, \ \ \Gamma_0^\varepsilon=\{{\bf x} \ | {\bf x} = (x,h_0^\varepsilon(x))\}\subset \T^1\times[0,1),
\]
and set 
\[
\Omega_0^\varepsilon: = \{(x,y)\in \T^1\times[0,1) \ | \, x\in \T^1, \ h_0^\varepsilon(x) < y <1\}.
\]
Here $\rho_\varepsilon$ is the the standard mollifier defined in Definition~\ref{D:CONVOLUTION} and
the domain $\Omega_0^\varepsilon$ will be our reference domain.
Clearly $h_0^\varepsilon\in C^\infty(\Gamma)$ and for $\varepsilon$ sufficiently small we can parametrize the evolving surface $\Gamma(t)$ as a graph over $\Gamma_0^\varepsilon$
using the outward-pointing unit normal vector field $N^\varepsilon$ to $\Gamma_0^\varepsilon:$
\[
\Gamma(t) = \{{\bf x} \ | {\bf x} = (x,h_0^\varepsilon (x)) + h(t,x) N^\varepsilon(x)\}, \ \ N^\varepsilon(x) = \frac{(\t h_0^\varepsilon,-1)}{\sqrt{1+|\t h_0^\varepsilon |^2}}.
\]
Note that $|h_\varepsilon - h_0|_{4.5} \to 0$ as $\varepsilon\to0.$ 
The construction of the harmonic diffeormorphic extension $\Psi:\Omega_0^\varepsilon \to \Omega(t)$ of the boundary data
\[
\Psi(t,x, h_0^\varepsilon(x)) = (x,h_0^\varepsilon (x)) + h(t,x) N^\varepsilon(x) , \ \ \Psi(t,x,1) = (x,1)
\]
is a simple consequence of the existence theory for the Dirichlet boundary value problems for systems of elliptic partial differential equations, since
for small $\varepsilon$ and small times $t\ge0$ we have 
\[
|\Psi - \text{Id}|_{4.5} \lesssim \varepsilon \ll1.
\]
Using the argument in~\eqref{eq:gauge2} 
the trace estimate~\eqref{DM} is true.
Fixing an $\varepsilon>0$ sufficiently small we drop the $\varepsilon$-notation and refer to the reference curve $\Gamma_0^\varepsilon$ as $\Gamma,$ 
the reference domain $\Omega_0^\varepsilon$ as $\Omega,$ the reference unit normal $N^\varepsilon$ as $N,$ and the reference height $h_0^\varepsilon$ as $\tilde h.$ 
In the harmonic gauge, the Stefan problem takes nearly the same form~\eqref{eq:ALE}:
\begin{subequations}
\label{eq:ALE2}
\begin{alignat}{2}
q_t-A^j_i(A^k_iq,_k),_j&=-v\cdot w&& \ \text{ in } \ \Omega\,,\label{eq:ALEheat2}\\
v^i+A^k_iq,_k&=0&&\ \text{ in } \ \Omega\,,\\
q&=0&& \ \text{ on } \ \Gamma\,,\label{eq:ALEdirichlet2}\\
\Psi_t\cdot n(t)&=-v\cdot n(t)&& \ \text{ on } \ \Gamma\,,\label{eq:ALEneumann2}\\
v\cdot N&=0&& \ \text{ on } \ \g\Omega_{\text{top}}\,,\label{eq:ALEtop2}\\
q(0,\cdot)=q_0=p_0\circ\Psi;& \ \Psi(0,\cdot)=\Psi_0\,,&&\,\label{eq:ALEinitial2},
\end{alignat}
\end{subequations}
where 
\[
\|\Psi_0 - \text{Id}\|_{H^5} \lesssim \varepsilon
\]
and the local coordinate realization of the unit normal $n(t,x)$ takes the more general form:
\[
n(t,x) = \frac{(1-h \mathcal H)\sqrt{1+(\t \tilde h)^2} \, N - \t h T}{\sqrt{(1+(\t \tilde h)^2)(1-h \mathcal H_0)^2 + (\t h)^2}}, \ \ x\in \T^1,
\]
where 
\[
\mathcal H = -\frac{\t^2 \tilde h}{(1+(\t \tilde h)^2)^{3/2}}, \ \ T= \frac{(1, \t \tilde h)}{\sqrt{1+(\t \tilde h)^2}}
\]
stand for the mean curvature and the unit tangent to the reference surface $\Gamma$ respectively.
The proof of Theorem~\ref{th:main} applies to~\eqref{eq:ALE2} in an analogous manner, it is simply more technical.
The main technical novelty is that the tangential vector-fields to the reference surface $\Gamma$ are not given by $\t=\partial_x,$ as 
$\Gamma$ may have a nontrivial curvature in general.
Therefore, in the neighborhood of $\Gamma$ for any $C^1$ function $f:\Omega\to\R$ we define the tangental derivative
\[
\t f = \nabla f\cdot T,
\] 
where $T$ is a local extension of the unit tangent vector field $T$ into the domain $\Omega.$ Choosing a smooth cut-off function $\mu:\Omega\to[0,1]$ 
defined to be $1$ in a neighborhood of $\Gamma$ and $0$ in a neighborhood of $\partial\Omega_{\text{top}},$ we can  replace the operator $\t$ in Lemma~\ref{lm:i1}
by the operator
\[
\mu \t + (1-\mu ) \partial_i,  \ \ i = 1,2.
\]
The ensuing energy identities, energy estimates, and the proof of Theorem~\ref{th:main} follow in an analogous way.

\section{Auxiliary lemmas}

%

\renewcommand{\thelemma}{\Alph{section}.\arabic{lemma}}

We collect some auxiliary estimates in this section that have been used in the proof of the energy estimates.
The following commutator estimate is used in the proof of Proposition~\ref{pr:basic1}.
\begin{lemma}[Lemma 5.1 in~\cite{CoSh10}]\label{lm:comm}
For $F\in W^{1,\infty}(\Gamma)$ and $G,\t G\in L^2(\Gamma)$, there is 
a generic constant $C$ independent of $\kappa$ such that
\[
\big|\Lambda_{\kappa}(F\t G)-f\Lambda_{\kappa}\t G\big|
\leq C|F|_{W^{1,\infty}(\Gamma)}|G|_0,
\]
where $W^{1,\infty}(\Gamma)$ denotes the Sobolev space of functions
$h\in L^{\infty}(\Gamma)$ with weak derivative $\t h\in L^{\infty}(\Gamma)$.
\end{lemma}

Similarly, the following bound is used in estimating some top-order terms in the energy estimates. 
%
\begin{lemma}[Lemma 8.5 in~\cite{CoSh10}]\label{L:TECHNICAL}
Let $H^{\frac12}(\Omega)'$ denote the dual space of $H^{\frac12}(\Omega).$ Then there exists a positive constant $C>0$ such that 
\[
\|\t F\|_{H^{\frac12}(\Omega)'} \le C\|F\|_{H^{\frac12}(\Omega)}.
\]
\end{lemma}

\begin{proof}
The proof is a simple consequence of an interpolation estimate between $L^2(\Omega)$ and $H^1(\Omega)'$ - spaces. The details are given in~\cite{CoSh10}.
\end{proof}


\end{document}